\theoremstyle{thrm}
\theoremstyle{plain}
\newtheorem{no}{Notation}[section]
\newtheorem{thm}{Theorem}[section]
\newtheorem{lemma}[thm]{Lemma}
\newtheorem{prop}[thm]{Proposition}
\newtheorem{cor}[thm]{Corollary}
\newtheorem{question}[thm]{Question}
\newtheorem{defn}[thm]{Definition}
\newtheorem{example}[thm]{Example}
\newtheorem*{ack}{Acknowledgement}
\theoremstyle{definition}
\newtheorem{remark}[equation]{Remark}
\newcommand{\B}{\operatorname{B} }
\newcommand{\Ho}{\operatorname{H} }
\newcommand{\Z}{\operatorname{Z} }
\newcommand{\im}{\operatorname{im} }
\newcommand{\Fix}{\operatorname{Fix}}
\newcommand{\Id}{\operatorname{id}}
\newcommand{\Aut}{\operatorname{Aut} }
\newcommand{\Ext}{\operatorname{Ext} }
\newcommand{\CExt}{\operatorname{CExt} }
\newcommand{\Ker}{\operatorname{ker} }
\newcommand{\IM}{\operatorname{im} }
\numberwithin{equation}{section}
\begin{document}

\title{Cohomology and Extensions of Relative Rota-Baxter groups}

\author{Pragya Belwal}
\address{Department of Mathematical Sciences, Indian Institute of Science Education and Research (IISER) Mohali, Sector 81, SAS Nagar, P O Manauli, Punjab 140306, India} 
\email{pragyabelwal.math@gmail.com}

\author{Nishant Rathee}
\address{Department of Mathematical Sciences, Indian Institute of Science Education and Research (IISER) Mohali, Sector 81, SAS Nagar, P O Manauli, Punjab 140306, India} 
\email{nishantrathee@iisermohali.ac.in}

\author{Mahender Singh}
\address{Department of Mathematical Sciences, Indian Institute of Science Education and Research (IISER) Mohali, Sector 81, SAS Nagar, P O Manauli, Punjab 140306, India} 
\email{mahender@iisermohali.ac.in} 

\subjclass[2010]{17B38, 16T25, 81R50}
\keywords{Cohomology; extensions; relative Rota-Baxter groups; Rota-Baxter groups; skew left braces; Yang-Baxter equation}

\begin{abstract}
Relative Rota-Baxter groups are generalisations of Rota-Baxter groups and recently shown to be intimately related to skew left braces, which are well-known to yield bijective non-degenerate solutions to the Yang-Baxter equation. In this paper, we develop an extension theory of relative Rota-Baxter groups and introduce their low dimensional cohomology groups, which are distinct from the ones known in the context of Rota-Baxter operators on Lie groups. We establish an explicit bijection between the set of equivalence classes of extensions of relative Rota-Baxter groups and their second cohomology. Further, we delve into the connections between this cohomology and the cohomology of associated skew left braces. We prove that for bijective relative Rota-Baxter groups, the two cohomologies are isomorphic in dimension two.
\end{abstract}	

\maketitle

\section{Introduction}
Rota-Baxter operators of weight $1$ on Lie groups were introduced by  Guo, Lang and Sheng in \cite{LHY2021}, with a focus on smooth Rota-Baxter operators so that these operators can be differentiated to yield Rota-Baxter operators of weight $1$ on corresponding Lie algebras. Further study of Rota-Baxter operators on (abstract) groups has been carried out by Bardakov and Gubarev in \cite{VV2022, VV2023}. They showed that every Rota-Baxter operator on a group gives rise to a skew left brace structure on that group, and hence gives a set-theoretical solution to the Yang-Baxter equation. Recall that, a set-theoretical solution to the  Yang Baxter equation is a pair $(X, r)$, where $X$ is a set and $ r: X\times X \longrightarrow X\times X$ is a map (called a braiding) written as $r(x,y) = (\sigma_x(y), \tau_y(x))$ such that the braid equation 
$$(r\times \Id)(\Id \times r)(r\times \Id)=(\Id \times r)(r\times \Id)( \Id \times r)$$
is satisfied. A solution is non degenerate if the maps $x \mapsto \sigma_x$ and $x \mapsto \tau_x$ are bijective for all $x, y \in X$. It is known due to Guarnieri and Vendramin \cite{GV17} that every skew left brace gives rise to a bijective non-degenerate solution to the Yang-Baxter equation. This immediately relates Rota-Baxter operators on groups to the problem of classification of set-theoretical solutions to the  Yang-Baxter equation as envisaged by Drinfel'd \cite{MR1183474}.  It has been shown in \cite{VV2022} that any skew left brace can be embedded into a Rota-Baxter group. A cohomological characterization of skew left braces that can be induced from a Rota-Baxter group has been given in \cite{AS22}, and compelling non-trivial examples of skew left braces that cannot be induced from any Rota-Baxter group have been provided.   The idea of Rota-Baxter operators has been generalised by Jiang, Sheng and Zhu \cite{JYC22} to relative Rota-Baxter operators on Lie groups. Intimate connections of relative Rota-Baxter groups with skew left braces have been explored in \cite{NM1}, where it has been shown that there is an isomorphism between the two categories under some mild conditions.  The recent work \cite{BGST} delves into the study of relative Rota-Baxter groups, their connection to set-theoretical solutions to the  Yang-Baxter equation, Butcher groups, post-groups, and pre-groups. The paper \cite{CMP} offers a definition of Rota-Baxter operators in the framework of Clifford semigroups. Notably, a  connection between Rota-Baxter operators on Clifford semigroups and weak braces has been established, which represent a broader form of skew left braces. Weak braces are  known  to give set-theoretical solutions to the Yang-Baxter equation which need not be non-degenerate \cite{CMMS}.
\par

A cohomology theory for relative Rota-Baxter operators on Lie groups has been introduced in \cite{JYC22}. However, it is worth noting that this cohomology theory is specific to Lie groups since it is defined as the cohomology of the descendent group with coefficients in the Lie algebra of the acting group when viewed as a module over the  descendent group. Further, a classification of abelian extensions of Rota-Baxter groups,  through a suitably defined second cohomology group, has been given in \cite{AN2}. Taking inspiration from the  extensions theory of (abstract) groups and maintaining a focus on connections to skew left braces, we formulate an extension theory tailored for relative Rota-Baxter groups. This involves the introduction of novel cohomology groups in lower dimensions. Moreover, we show that extensions of relative Rota-Baxter groups can be classified through two-dimensional cohomology. We establish connections between extensions of  relative Rota-Baxter groups and that of associated skew left braces and underlying groups.
\par

The paper is organised as follows. After the preliminary Section \ref{section prelim} on the necessary background, we develop the extension theory of relative Rota-Baxter groups in Section \ref{subsec extensions of RRB groups} and  introduce their low dimensional cohomology groups in Section \ref{subsec cohomology RRB groups}. We present one of our main results (Theorem \ref{ext and cohom bijection}) in Section \ref{subsec equivalent extensions and cohomology} which gives a bijection between the set of equivalence classes of extensions of relative Rota-Baxter groups and their second cohomology.  In Section \ref{sec extensions skew braces}, we recall extension theory of skew left braces and show the existence of a homomorphism from the second cohomology of relative Rota-Baxter groups to that of corresponding skew left braces (Proposition \ref{cohom RRB to cohom skew brace}). We prove that for bijective relative Rota-Baxter groups, the two cohomologies are isomorphic in dimension two (Corollary \ref{isomorphism RRB and SLB cohomology}). In Section \ref{sec central and split RRB}, we introduce central and split extensions of relative Rota-Baxter groups, and establish a bijection between semi-direct products and split extensions of such groups (Theorem \ref{semi-direct product and split extensions}).
\medskip

\section{Preliminaries on relative Rota-Baxter groups}\label{section prelim}
	
In this section, we recall some basic notions about relative Rota-Baxter groups that we shall need, and refer the readers to \cite{JYC22, NM1} for more details. We follow the  terminology of \cite{NM1}, which is a bit different from other works in the literature. All through, our groups are written multiplicatively, unless explicitly stated otherwise.

\begin{defn}
A relative Rota-Baxter group is a quadruple $(H, G, \phi, R)$, where $H$ and $G$ are groups, $\phi: G \rightarrow \Aut(H)$ a group homomorphism (where $\phi(g)$ is denoted by $\phi_g$) and $R: H \rightarrow G$ is a map satisfying the condition $$R(h_1) R(h_2)=R(h_1 \phi_{R(h_1)}(h_2))$$ for all $h_1, h_2 \in H$. 
\par
\noindent The map $R$ is referred as the relative Rota-Baxter operator on $H$.
\end{defn}

We say that the relative Rota-Baxter group  $(H, G, \phi, R)$ is trivial if $\phi:G \to \Aut(H)$ is the trivial homomorphism.

\begin{remark}
Note that if $(H, G, \phi, R)$ is  a trivial relative Rota-Baxter group, then $R:H \rightarrow G$ is a group homomorphism. Different homomorphisms give different trivial relative Rota-Baxter groups. Moreover, if $R$ is an isomorphism of groups, then $(H, G, \phi, R)$ is  a trivial relative Rota-Baxter group. 
\end{remark}

\begin{example}
Let $G$ be a group with subgroups $H$ and $L$ such that $G=HL$ and $H\cap L=\{1\}$. Then $(G,G, \phi, R)$ is a relative Rota-Baxter group, where $R: G \rightarrow G$ denotes the map given by $R(hl) = l^{-1}$ and $\phi : G \rightarrow \Aut(G)$ is the adjoint action, that is, $\phi_g(x) = gxg^{-1}$ for $g, h \in G$.
\end{example}

\begin{example}
Let $G$ be a group, and $G^{\rm op}$ its opposite group. Let $\phi: G^{\rm op} \rightarrow \Aut(G)$ be the homomorphism given by $\phi_x(y) = x^{-1} y x$ for all $x, y \in G$. Then the quadruple $(G, G^{\rm op}, \phi, \Id_G)$ is a relative Rota-Baxter group.
\end{example}

\begin{example}
Take $H= \mathbb{R}$ and $G=UP(2; \mathbb{R})$, the group of invertible upper triangular matrices. Let $\phi:UP(2; \mathbb{R})\to \Aut(\mathbb{R})$ be given by

$$\phi_{\begin{pmatrix} a & b \\ 0 & c \end{pmatrix}}(r)= ar$$
for $\begin{pmatrix} a & b \\ 0 & c \end{pmatrix} \in UP(2; \mathbb{R})$ and $r \in \mathbb{R}$. Further, let $R: \mathbb{R} \to  UP(2; \mathbb{R})$ be given by $$R(r)=\begin{pmatrix} 1 & r \\ 0 & 1 \end{pmatrix}.$$ Then $(\mathbb{R},UP(2; \mathbb{R}), \phi, R)$ is a relative Rota-Baxter group.
\end{example}

	Let  $(H, G, \phi, R)$ be a relative Rota-Baxter group, and let $K \leq H$ and $L \leq G$ be subgroups.
\begin{enumerate}
	\item If $K$ is $L$-invariant under the action $\phi$, then we denote the restriction of $\phi$ by $\phi|: L \to \Aut(K)$. 
	\item If $R(K) \subseteq L$, then we denote the restriction of $R$ by $R|: K \to L$.
\end{enumerate}

\begin{defn}
	Let $(H,G,\phi,R)$ be a relative Rota-Baxter group, and $K\leq H$ and $L\leq G$ be subgroups. Suppose that  $\phi_\ell(K) \subseteq K$ for all $\ell \in L$ and $R(K) \subseteq L$. Then $(K,L,\phi |,R |)$ is a relative Rota-Baxter group, which we refer as a relative Rota-Baxter subgroup of $(H,G,\phi,R)$ and write $(K,L,\phi |,R |)\leq(H,G,\phi,R)$.
\end{defn}

\begin{defn}\label{defn ideal rbb-datum}
	Let $(H, G, \phi, R)$ be a relative Rota-Baxter group and  $(K, L,  \phi|, R|) \leq (H, G, \phi, R)$ its relative Rota-Baxter subgroup. We say that $(K, L,  \phi|, R|)$ is an ideal of $(H, G, \phi, R)$ if 
	\begin{align}
		& K \trianglelefteq H \quad \mbox{and} \quad L \trianglelefteq G, \label{I0}\\
		& \phi_g(K) \subseteq K  \mbox{ for all } g \in G, \label{I1} \\
		& \phi_\ell(h) h^{-1} \in K \mbox{ for all } h \in H \mbox{ and }  \ell \in L. \label{I2}
	\end{align}
	We write $(K, L, \phi|, R|) \trianglelefteq (H, G, \phi, R)$ to denote an ideal of a relative Rota-Baxter group. 
\end{defn}

The preceding definitions lead to the following result \cite[Theorem 5.3]{NM1}.

\begin{thm}\label{subs}
	Let $(H, G, \phi, R)$ be a relative Rota-Baxter group and $(K, L,  \phi|, R|)$ an ideal of $(H, G, \phi, R)$. Then there are maps $\overline{\phi}: G/L \to \Aut(H/K)$ and  $\overline{R}: H/K \to G/L$ defined by
	$$ \overline{\phi}_{\overline{g}}(\overline{h})=\overline{\phi_{g}(h)} \quad \textrm{and} \quad	\overline{R}(\overline{h})=\overline{R(h)}$$
	for $\overline{g} \in G/L$ and $\overline{h} \in H/K$, such that  $(H/K, G/L, \overline{\phi}, \overline{R})$ is a relative Rota-Baxter group.
\end{thm}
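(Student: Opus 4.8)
The plan is to verify the claim in four steps, in this order: (i) that each $\overline{\phi}_{\overline{g}}$ is a well-defined endomorphism of $H/K$, independent of the choice of representative $g$; (ii) that each $\overline{\phi}_{\overline{g}}$ is in fact an automorphism and that $\overline{\phi}$ is a group homomorphism; (iii) that $\overline{R}$ is well-defined; and (iv) that $\overline{R}$ satisfies the relative Rota-Baxter identity. Note first that the quotients $H/K$ and $G/L$ are groups precisely because of condition \eqref{I0}. Steps (i), (ii) and (iv) will be formal once the definitions are unwound; the genuine content lies in step (iii).

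For step (i), condition \eqref{I1} guarantees $\phi_g(K)\subseteq K$, so each $\phi_g$ descends to a self-map of $H/K$ sending $\overline{h}$ to $\overline{\phi_g(h)}$; independence of the representative $h$ is immediate, since $\phi_g(hk)=\phi_g(h)\phi_g(k)$ with $\phi_g(k)\in K$. To see independence of the representative $g$, I would take $\ell\in L$ and compute
$$\phi_{g\ell}(h)\,\phi_g(h)^{-1} = \phi_g\bigl(\phi_\ell(h)h^{-1}\bigr),$$
which lies in $K$ because $\phi_\ell(h)h^{-1}\in K$ by \eqref{I2} and $\phi_g(K)\subseteq K$ by \eqref{I1}. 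Hence $\overline{\phi}_{\overline{g}}$ depends only on the coset $\overline{g}$. For step (ii), $\overline{\phi}_{\overline{g}}$ is a homomorphism because $\phi_g$ is, and it is invertible with inverse induced by $\phi_{g^{-1}}=\phi_g^{-1}$; finally $\overline{\phi}_{\overline{g_1}\,\overline{g_2}}$ is induced by $\phi_{g_1 g_2}=\phi_{g_1}\phi_{g_2}$, which yields $\overline{\phi}_{\overline{g_1}}\overline{\phi}_{\overline{g_2}}$, so $\overline{\phi}$ is a homomorphism $G/L\to\Aut(H/K)$.

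The crux is step (iii): showing $R(hk)\equiv R(h)\pmod L$ for every $k\in K$. Merely knowing $R(K)\subseteq L$ is not enough, since $R$ is not a homomorphism; instead I would exploit the defining identity. Because $K$ is $\phi$-invariant by \eqref{I1}, I can write $k=\phi_{R(h)}(k')$ with $k'=\phi_{R(h)^{-1}}(k)\in K$. The relative Rota-Baxter identity then gives
$$R(hk)=R\bigl(h\,\phi_{R(h)}(k')\bigr)=R(h)\,R(k'),$$
and $R(k')\in R(K)\subseteq L$, whence $R(h)^{-1}R(hk)=R(k')\in L$. This shows $\overline{R}(\overline{h}):=\overline{R(h)}$ is well-defined.

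With both maps established, step (iv) is a direct check using the identity in $(H,G,\phi,R)$:
$$\overline{R}(\overline{h_1})\,\overline{R}(\overline{h_2})=\overline{R(h_1)R(h_2)}=\overline{R\bigl(h_1\phi_{R(h_1)}(h_2)\bigr)},$$
while the definition of $\overline{\phi}$ gives $\overline{\phi}_{\overline{R}(\overline{h_1})}(\overline{h_2})=\overline{\phi_{R(h_1)}(h_2)}$, so that $\overline{h_1}\,\overline{\phi}_{\overline{R}(\overline{h_1})}(\overline{h_2})=\overline{h_1\phi_{R(h_1)}(h_2)}$ and therefore $\overline{R}\bigl(\overline{h_1}\,\overline{\phi}_{\overline{R}(\overline{h_1})}(\overline{h_2})\bigr)$ equals the same coset. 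Thus $(H/K,G/L,\overline{\phi},\overline{R})$ is a relative Rota-Baxter group. I expect the only subtle point to be the use of $\phi$-invariance of $K$ to rewrite $k$ as $\phi_{R(h)}(k')$ in step (iii); everything else is bookkeeping with cosets.
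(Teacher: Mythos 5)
Your proof is correct: the crucial point—writing $k=\phi_{R(h)}(k')$ with $k'=\phi_{R(h)^{-1}}(k)\in K$ (using \eqref{I1}) so that the Rota--Baxter identity yields $R(hk)=R(h)R(k')$ with $R(k')\in R(K)\subseteq L$—is exactly the step that makes $\overline{R}$ well-defined, and your handling of the representative-independence of $\overline{\phi}$ via \eqref{I2} and the final coset bookkeeping are all sound. The paper itself gives no proof (it defers to \cite[Theorem 5.3]{NM1}), and your argument is the standard one that reference employs, so there is nothing methodologically different to flag.
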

\begin{no}
	We write $(H, G, \phi, R)/(K, L, \phi|, R|)$ to denote the quotient relative Rota-Baxter group $(H/K, G/L, \overline{\phi}, \overline{R})$.
\end{no}

\begin{defn}
	Let $(H, G, \phi, R)$ and $(K, L, \varphi, S)$ be two relative Rota-Baxter groups.
	\begin{enumerate}
		\item A homomorphism $(\psi, \eta): (H, G, \phi, R) \to (K, L, \varphi, S)$ of relative Rota-Baxter groups is a pair $(\psi, \eta)$, where $\psi: H \rightarrow K$ and $\eta: G \rightarrow L$ are group homomorphisms such that
		\begin{equation}\label{rbb datum morphism}
			\eta \; R = S \; \psi \quad \textrm{and} \quad \psi \; \phi_g =  \varphi_{\eta(g)}\; \psi
		\end{equation}
for all $g \in G$.
		\item The kernel of a homomorphism $(\psi, \eta): (H, G, \phi, R) \to (K, L, \varphi, S)$ of relative Rota-Baxter groups is the quadruple $$(\Ker(\psi), \Ker(\eta), \phi|, R|),$$ where $\Ker(\psi)$ and $\Ker(\eta)$ denote the kernels of the group homomorphisms $\psi$ and $\eta$, respectively. The conditions in \eqref{rbb datum morphism} imply that the kernel is itself a relative Rota-Baxter group. In fact, the kernel turns out to be an ideal of $(H, G, \phi, R)$.
		
		\item The image of a homomorphism $(\psi, \eta):  (H, G, \phi, R) \to (K, L, \varphi, S)$ of relative Rota-Baxter groups is the quadruple 
		$$(\IM(\psi), \IM(\eta), \varphi|, S| ),$$ where $\IM(\psi)$ and $\IM(\eta)$ denote the images of the group homomorphisms $\psi$ and $\eta$, respectively. The image is itself a relative Rota-Baxter group.
		
		\item A homomorphism $(\psi, \eta)$ of relative Rota-Baxter groups is called an isomorphism if both $\psi$ and $\eta$ are group isomorphisms. Similarly, we say that $(\psi, \eta)$ is an embedding of a relative Rota-Baxter group if both $\psi$ and $\eta$ are embeddings of groups.
	\end{enumerate}
\end{defn}

\begin{defn}
A Rota-Baxter group is a group $G$ together with a map $R: G \rightarrow G$ such that
$$ R(x)  R(y)= R(x  R(x)  y  R(x)^{-1}) $$
for all $x, y \in G$. The map $R$ is referred as the Rota-Baxter operator on $G$.
\end{defn}
Let $\phi : G \rightarrow \Aut(G)$ be the adjoint action, that is, $\phi_g(x)=gxg^{-1}$ for $g, h \in G$. Then the relative Rota-Baxter group $(G, G, \phi, R)$ is simply a Rota-Baxter group.

\begin{prop}\label{R homo H to G} \cite[Proposition 3.5]{JYC22} 
Let $(H, G, \phi, R)$ be a relative Rota-Baxter group. Then the operation 
\begin{align}
h_1 \circ_R h_2 = h_1 \phi_{R(h_1)}(h_2)
\end{align}
defines a group operation on $H$.  Moreover, the map $R: H^{(\circ_R)} \rightarrow G$ is a group homomorphism. The group $H^{(\circ_R)}$ is called the descendent group of $R$.
\end{prop}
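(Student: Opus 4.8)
The plan is to verify directly that $\circ_R$ satisfies the group axioms and then observe that the homomorphism assertion falls out of the defining relation of $R$ almost for free. The first thing I would record is that $R(1_H) = 1_G$: feeding $h_1 = h_2 = 1_H$ into the Rota-Baxter identity $R(h_1)R(h_2) = R(h_1\,\phi_{R(h_1)}(h_2))$ and using that each $\phi_g$ is an automorphism (so fixes $1_H$) gives $R(1_H)^2 = R(1_H)$, forcing $R(1_H) = 1_G$. With this in hand, $1_H$ is a two-sided identity for $\circ_R$, since $1_H \circ_R h = \phi_{R(1_H)}(h) = \phi_{1_G}(h) = h$ and $h \circ_R 1_H = h\,\phi_{R(h)}(1_H) = h$.

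For associativity I would expand both groupings and reconcile them using two facts in tandem: the Rota-Baxter relation, which lets me rewrite $R(h_1 \circ_R h_2) = R(h_1)R(h_2)$ inside the subscript, and the homomorphism property of $\phi$ together with the fact that each $\phi_g$ respects products in $H$. Concretely, $(h_1 \circ_R h_2)\circ_R h_3$ unwinds to $h_1\,\phi_{R(h_1)}(h_2)\,\phi_{R(h_1)R(h_2)}(h_3)$, and writing $\phi_{R(h_1)R(h_2)} = \phi_{R(h_1)}\phi_{R(h_2)}$ and pulling $\phi_{R(h_1)}$ out across the product collapses this to $h_1\,\phi_{R(h_1)}\!\big(h_2\,\phi_{R(h_2)}(h_3)\big) = h_1 \circ_R (h_2 \circ_R h_3)$. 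For inverses I would propose the explicit candidate $\phi_{R(h)^{-1}}(h^{-1})$, where $h^{-1}$ is the inverse in $H$; a one-line check shows $h \circ_R \phi_{R(h)^{-1}}(h^{-1}) = h\,\phi_{R(h)R(h)^{-1}}(h^{-1}) = 1_H$, giving a right inverse, and since we already have associativity and a two-sided identity, the standard monoid argument upgrades right inverses to two-sided inverses. Hence $(H, \circ_R)$ is a group.

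Finally, the homomorphism statement is immediate: the Rota-Baxter relation reads exactly $R(h_1 \circ_R h_2) = R(h_1\,\phi_{R(h_1)}(h_2)) = R(h_1)R(h_2)$, so $R$ carries $\circ_R$ to the product in $G$. I do not anticipate a genuine obstacle here; the only place demanding care is the associativity computation, where one must apply the Rota-Baxter relation (to simplify the subscript $R(h_1 \circ_R h_2)$) and the automorphism/homomorphism properties of $\phi$ in the correct order so that the factor $\phi_{R(h_1)}$ can be extracted across a product of two terms in $H$.
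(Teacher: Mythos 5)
Your proof is correct and complete: the computation $R(1_H)^2 = R(1_H)$ giving $R(1_H)=1_G$, the associativity argument via $R(h_1 \circ_R h_2) = R(h_1)R(h_2)$ combined with the multiplicativity of $\phi_{R(h_1)}$, the explicit right inverse $\phi_{R(h)^{-1}}(h^{-1})$ upgraded by the standard monoid argument, and the observation that the homomorphism property is a restatement of the defining relation all check out. The paper itself offers no proof of this statement---it is recalled from \cite[Proposition 3.5]{JYC22}---and your direct verification is exactly the standard argument for this result, so there is nothing to contrast.
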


If  $(H, G, \phi, R)$ is a relative Rota-Baxter group, then the image $R(H)$ of $H$ under $R$ is  a subgroup of $G$.

\begin{defn}
	The center of a relative Rota-Baxter group $(H, G, \phi, R)$ is defined as
	$$\Z(H, G, \phi, R)= \big( \Z^{\phi}_R(H), \Ker(\phi), \phi|, R| \big),$$
	where $\Z^{\phi}_{R}(H)=\Z(H)  \cap \Ker(\phi \,R) \cap \Fix(\phi)$, $\Fix(\phi)= \{ x \in H  \mid \phi_g(x)=x \mbox{ for all } g \in G \}$ and  $R: H^{(\circ_R)} \rightarrow G$ is viewed as a group homomorphism.
\end{defn}

Let us recall the definition of a skew left brace.

\begin{defn}
	A skew left brace is a  triple $(H,\cdot ,\circ)$, where $(H,\cdot)$ and $(H, \circ)$ are groups  such that
	$$a \circ (b \cdot c)=(a\circ b) \cdot a^{-1} \cdot (a \circ c)$$
	holds for all $a,b,c \in H$, where $a^{-1}$ denotes the inverse of $a$ in $(H, \cdot)$. The groups $(H,\cdot)$ and $(H, \circ)$ are called the additive and the multiplicative groups of the skew left brace $(H, \cdot, \circ)$, and will sometimes be abbreviated as $H ^{(\cdot)}$ and  $H ^{(\circ)}$, respectively.
\end{defn}

A skew left brace $(H, \cdot, \circ)$  is said to be  trivial  if $a \cdot b= a \circ b$ for all $a, b \in H$.

\begin{prop}\label{rrb2sb}\cite[Proposition 3.5]{NM1} 
	Let $(H, G, \phi, R)$ be a relative Rota-Baxter group. If $\cdot$ denotes the  group operation of $H$, then the triple $(H, \cdot, \circ_R)$ is a skew left brace. 
\end{prop}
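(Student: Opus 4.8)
The plan is to leverage Proposition~\ref{R homo H to G}, which already supplies half of what is needed: it tells us that $(H, \circ_R)$ is a group, where $h_1 \circ_R h_2 = h_1 \cdot \phi_{R(h_1)}(h_2)$. Since $(H, \cdot)$ is a group by hypothesis, the only remaining obligation in establishing that $(H, \cdot, \circ_R)$ is a skew left brace is to verify the single compatibility identity
\[
a \circ_R (b \cdot c) = (a \circ_R b) \cdot a^{-1} \cdot (a \circ_R c)
\]
for all $a, b, c \in H$, where $a^{-1}$ denotes the inverse in $(H, \cdot)$.

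To check this, I would expand both sides directly using the definition of $\circ_R$, the crucial structural input being that $\phi_{R(a)}$ lies in $\Aut(H)$ and therefore respects the multiplication $\cdot$. For the left-hand side,
\[
a \circ_R (b \cdot c) = a \cdot \phi_{R(a)}(b \cdot c) = a \cdot \phi_{R(a)}(b) \cdot \phi_{R(a)}(c),
\]
where the last equality is precisely the statement that $\phi_{R(a)}$ is an automorphism of $(H,\cdot)$. For the right-hand side,
\[
(a \circ_R b) \cdot a^{-1} \cdot (a \circ_R c) = \big(a \cdot \phi_{R(a)}(b)\big) \cdot a^{-1} \cdot \big(a \cdot \phi_{R(a)}(c)\big) = a \cdot \phi_{R(a)}(b) \cdot \phi_{R(a)}(c),
\]
after cancelling $a^{-1} \cdot a$ in the middle. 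Comparing the two computations shows that both sides equal $a \cdot \phi_{R(a)}(b) \cdot \phi_{R(a)}(c)$, which establishes the identity.

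I do not anticipate a genuine obstacle here: the entire content of the argument is the observation that the homomorphism property of $\phi_{R(a)}$ is exactly the distributivity needed for the skew left brace axiom, and the associated group structure on $H^{(\circ_R)}$ has already been furnished by Proposition~\ref{R homo H to G}. The only point meriting care is bookkeeping of inverses with respect to the correct operation, namely that $a^{-1}$ is taken in $(H, \cdot)$ and not in $(H, \circ_R)$; once this is fixed, the cancellation in the right-hand side is immediate and the verification closes.
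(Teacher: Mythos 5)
Your proof is correct: the group structure on $(H,\circ_R)$ is exactly what Proposition~\ref{R homo H to G} supplies, and the brace identity reduces, as you show, to the fact that $\phi_{R(a)} \in \Aut(H)$ preserves $\cdot$, with the $a^{-1}\cdot a$ cancellation handling the rest. Note that the paper itself gives no proof here --- it cites \cite[Proposition 3.5]{NM1} --- and your argument is the natural one that that reference carries out, so this matches the intended proof.
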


If $(H, G, \phi, R)$ is a relative Rota-Baxter group, then $(H, \cdot, \circ_R)$ is referred as the skew left brace induced by $R$ and will be denoted by $H_R$ for brevity. The following indispensable result is immediate \cite[Proposition 4.3]{NM1}.

\begin{prop}\label{rrb to slb homo}
A homomorphism of relative Rota-Baxter groups induces a homomorphism of induced skew left braces. 
\end{prop}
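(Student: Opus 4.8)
The plan is to take the morphism $(\psi,\eta)\colon (H,G,\phi,R)\to(K,L,\varphi,S)$ and to show that its first component $\psi\colon H\to K$ is itself a homomorphism of the induced skew left braces $H_R=(H,\cdot,\circ_R)\to K_S=(K,\cdot,\circ_S)$. By definition, a skew left brace homomorphism must respect both the additive operation $\cdot$ and the multiplicative operation $\circ$. Since $\psi$ is assumed to be a group homomorphism of the additive groups $H^{(\cdot)}\to K^{(\cdot)}$, the additive compatibility $\psi(h_1\cdot h_2)=\psi(h_1)\cdot\psi(h_2)$ holds for free, so the entire content of the proposition lies in verifying compatibility with the circle operations.

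For the multiplicative part, I would compute $\psi(h_1\circ_R h_2)$ directly from the formula $h_1\circ_R h_2 = h_1\,\phi_{R(h_1)}(h_2)$ of Proposition~\ref{R homo H to G}. Applying $\psi$ and using that it is a homomorphism for $\cdot$ gives $\psi(h_1\circ_R h_2)=\psi(h_1)\cdot\psi\big(\phi_{R(h_1)}(h_2)\big)$. The two defining relations of a morphism in \eqref{rbb datum morphism} then come into play: the intertwining relation $\psi\,\phi_g=\varphi_{\eta(g)}\,\psi$ with $g=R(h_1)$ rewrites the second factor as $\varphi_{\eta(R(h_1))}(\psi(h_2))$, and the relation $\eta\,R=S\,\psi$ replaces $\eta(R(h_1))$ by $S(\psi(h_1))$. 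Putting these together yields
\[
\psi(h_1\circ_R h_2)=\psi(h_1)\cdot\varphi_{S(\psi(h_1))}\big(\psi(h_2)\big)=\psi(h_1)\circ_S\psi(h_2),
\]
which is exactly the required multiplicative compatibility.

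The argument is essentially a short chain of substitutions, so there is no genuine obstacle; the only point requiring care is that \emph{both} conditions in \eqref{rbb datum morphism} must be used in tandem and in the correct order, namely first the action-intertwining relation to move $\phi$ past $\psi$, and then the operator-intertwining relation to convert $\eta\,R$ into $S\,\psi$. I would also note, since the result is invoked later, that the assignment $(\psi,\eta)\mapsto\psi$ manifestly preserves identities and composition, so it is functorial from relative Rota-Baxter groups to skew left braces.
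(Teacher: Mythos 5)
Your proof is correct and is precisely the expected argument: the paper itself gives no proof, citing the result as immediate from \cite[Proposition 4.3]{NM1}, and your chain of substitutions using the two relations in \eqref{rbb datum morphism} is exactly what makes it immediate. The only point worth noting is that a skew left brace homomorphism is a single map respecting both operations, so discarding $\eta$ and keeping only $\psi$ is indeed the right move, as you observe.
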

\medskip

\section{Extensions and cohomology of relative Rota-Baxter groups}\label{sec cohomology and extensions of RRB groups}

Let $A$ be a group and $K$ be an abelian group.  Then $K$ is said to be a right $A$-module if there exists an anti-homomorphism $\mu: A \rightarrow \Aut(K)$. Writing $k \cdot a= \mu_a(k)$, we see that 
$$k \cdot (ab)= \mu_{ab}(k)=  \mu_{b} \mu_{a}(k)= (k \cdot a) \cdot b$$ 
for all $a, b \in A$ and $k \in K$.

Let $K$ be a right $A$-module via an anti-homomorphism $\mu: A \to \Aut(K)$. For each $n \geq 1$, let $C^n(A, K)$ be the group of all maps $A^n \to K$ which vanish on degenerate tuples, where a tuple $(a_1,\ldots, a_n) \in A^n$ is called degenerate if $a_i = 1$ for at least one $i$. Further, let $\partial_\mu^n: C^n(A,K) \rightarrow C^{n+1}(A,K)$ be defined by
\begin{eqnarray}\label{group coboundary mu}
&& (\partial_\mu^n f)(a_1, a_2,\ldots, a_{n+1}) \\
&= & f(a_2, \ldots,a_{n+1})~ \prod^{n}_{k=1}    (f(a_1, \ldots, a_k \cdot a_{k+1}, \ldots, a_{n+1}))^{(-1)^k}~ \mu_{a_{n+1}}(f(a_1, \ldots,a_{n})), \notag
\end{eqnarray} 
where $\cdot$ is the group operation in $A$. Then, $\{C^n(A, K)$, $\partial_\mu^n\}$ forms a cochain complex. 
\par 

Now, let $(A,B, \phi, R)$ be a relative Rota-Baxter group and $L$ a right $B$-module via an anti-homomorphism $\sigma: B \to \Aut(L)$. Let $\{C^n(B, L)$, $\partial^{n}_{\sigma}\}$ be the corresponding cochain complex. We have a homomorphism $R: A^{(\circ_R)} \rightarrow B$, which allows us to turn $L$  into a right $A^{(\circ_R)}$-module through the composition of $\sigma$ and $R$.  Consequently, we can define the corresponding cochain complex $\{C^n(A^{(\circ_R)}, L), \delta_{\sigma}^n\}$, where $\delta_{\sigma}^n$ is defined by 
\begin{eqnarray}\label{group coboundary twisted sigma}
&&	(\delta_{\sigma}^n f)(a_1, a_2,\ldots, a_{n+1})\\
 &= & f(a_2, \ldots,a_{n+1}) ~\prod^{n}_{k=1}    (f(a_1, \ldots, a_k \circ_R a_{k+1}, \ldots, a_{n+1}))^{(-1)^k} ~ \sigma_{R(a_{n+1})}(f(a_1, \ldots,a_{n})). \notag
\end{eqnarray}
\medskip

\subsection{Extensions of relative Rota-Baxter groups}\label{subsec extensions of RRB groups}
Here and henceforth, we write $\bf{1}$ to denote the trivial relative Rota-Baxter group for which both the underlying groups and the maps are trivial.

\begin{defn}
Let $(K,L, \alpha,S )$ and $(A,B, \beta, T)$ be relative Rota-Baxter groups.  
\begin{enumerate}
\item An extension of $(A,B, \beta, T)$ by $(K,L, \alpha,S )$ is a relative Rota-Baxter group  $(H,G, \phi, R)$ that fits into the sequence 
$$\mathcal{E} : \quad  {\bf 1} \longrightarrow (K,L, \alpha,S ) \stackrel{(i_1, i_2)}{\longrightarrow}  (H,G, \phi, R) \stackrel{(\pi_1, \pi_2)}{\longrightarrow} (A,B, \beta, T) \longrightarrow {\bf 1},$$ 
where  $(i_1, i_2)$ and $(\pi_1, \pi_2)$ are morphisms of relative Rota-Baxter groups such that $(i_1, i_2)$ is an embedding, $(\pi_1, \pi_2)$ is an epimorphism of  relative Rota-Baxter groups and $(\IM(i_1), \IM(i_2), \phi|, R|)= (\Ker(\pi_1), \Ker(\pi_2), \phi|, R|)$.
\item[] To avoid complexity of notation, we assume that $i_1$ and $i_2$ are inclusion maps. This allows us to write that $\phi$ restricted to  $L$ is $\alpha$ and $R$ restricted to  $K$ is $S$.	
		\item We say that $\mathcal{E}$ is an abelian extension if $K$ and $L$ are abelian groups and the relative Rota-Baxter group $(K,L, \alpha,S )$ is trivial.
		
		\item We say that $\mathcal{E}$ is a central extension if $K \leq \Z^{\phi}_{R}(H)$ and $L \leq \Z(G) \cap \Ker(\phi)$. Clearly, every central extension is abelian.
	\end{enumerate}
\end{defn}

By abuse of notation, let ${\bf 1}$ also denote the trivial skew left brace for which both the underlying groups are trivial.

\begin{defn}
Let $Q$ and $I$ be skew left braces. An extension of $Q$ by $I$ is a skew left brace $E$ that fits into the sequence
$$\mathcal{E} : \quad  {\bf 1}\longrightarrow I \stackrel{i}{\longrightarrow}  E \stackrel{\pi}{\longrightarrow} Q \longrightarrow {\bf 1},$$ 
where $i$ and $\pi$ are morphisms of skew left braces such that $\im(i)=\ker(\pi)$, $i$ is injective and $\pi$ is surjective.
\end{defn}

\begin{remark}\label{extsb}
By Proposition \ref{rrb to slb homo}, a morphism of relative Rota-Baxter groups induces a morphism of induced skew left braces. Thus, it follows that an extension 
$$\mathcal{E}: \quad {\bf 1} \longrightarrow (K,L, \alpha,S ) \stackrel{(i_1, i_2)}{\longrightarrow}  (H,G, \phi, R) \stackrel{(\pi_1, \pi_2)}{\longrightarrow} (A,B, \beta, T) \longrightarrow {\bf 1}$$ 
of relative Rota-Baxter groups induces an extension
$$\mathcal{E}_{SB}: \quad  {\bf 1} \longrightarrow K_{S} \stackrel{i_1}{\longrightarrow}  H_{R} \stackrel{\pi_1}{\longrightarrow} A_{T} \longrightarrow {\bf 1}$$
of induced skew left braces. We shall see later in Corollary \ref{RRB2sbext} that equivalent extensions of relative Rota-Baxter groups map to equivalent extensions of skew left braces.
\end{remark}

\begin{remark}\label{remark1}
An abelian extension of relative Rota-Baxter groups induces an extension of a skew left brace by  a trivial brace. Such extensions of skew left braces have been explored in \cite{DB18, NMY1}. Similarly, a central extension of relative Rota-Baxter groups induces a central extension of skew left braces, which have been investigated in \cite{CCS3, LV16}.
\end{remark}

Let $(H,\cdot ,\circ)$ be a skew left brace. Then the map $\lambda^H: H^{(\circ)} \longrightarrow \operatorname{Aut}(H^{(\cdot)})$ defined by 
$$\lambda^H_a(b) = a^{-1} \cdot (a \circ b),$$ 
for $a,b \in H$, is a group homomorphism. It turns out that the identity map $\Id_H: H^{(\cdot)} \longrightarrow H^{(\circ)}$ is a relative Rota-Baxter operator with respect to the action $\lambda^H$, and hence  the quadruple $(H^{(\cdot)}, H^{(\circ)}, \lambda^H, \Id_H)$ is a relative Rota-Baxter group corresponding to the skew left brace $(H,\cdot ,\circ)$.

\begin{prop} \label{rrb induced by slb}\cite[Proposition 3.8]{NM1}
The skew left brace  $(H,\cdot ,\circ)$ is induced by the relative Rota-Baxter group  $(H^{(\cdot)}, H^{(\circ)}, \lambda^H, \Id_H)$.
\end{prop}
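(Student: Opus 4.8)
The plan is to apply the construction of Proposition \ref{rrb2sb} to the quadruple $(H^{(\cdot)}, H^{(\circ)}, \lambda^H, \Id_H)$ — which the paragraph preceding the statement has already identified as a relative Rota-Baxter group — and then check that the skew left brace it induces is literally the one we started with. By Proposition \ref{rrb2sb}, that induced brace has underlying additive group $(H, \cdot)$, which already coincides with $H^{(\cdot)}$, and multiplicative operation $\circ_{\Id_H}$ prescribed by
$$h_1 \circ_{\Id_H} h_2 = h_1 \cdot \lambda^H_{\Id_H(h_1)}(h_2).$$
Thus the entire content of the proposition reduces to the single identity $\circ_{\Id_H} = \circ$.

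To establish it, I would substitute $\Id_H(h_1) = h_1$ together with the definition $\lambda^H_{h_1}(h_2) = h_1^{-1} \cdot (h_1 \circ h_2)$, obtaining
$$h_1 \circ_{\Id_H} h_2 = h_1 \cdot \big( h_1^{-1} \cdot (h_1 \circ h_2) \big).$$
Using associativity of $\cdot$ and the fact that $h_1 \cdot h_1^{-1}$ is the identity of the additive group, the right-hand side collapses to $h_1 \circ h_2$. Hence $\circ_{\Id_H} = \circ$ on the nose, so the induced brace $(H, \cdot, \circ_{\Id_H})$ equals $(H, \cdot, \circ)$, which is exactly the assertion.

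The hard part here is essentially nonexistent: the proof is the one-line cancellation above. All the structural input it relies on — that $\lambda^H$ is a homomorphism $H^{(\circ)} \to \Aut(H^{(\cdot)})$ and that $\Id_H$ satisfies the relative Rota-Baxter identity, so that Proposition \ref{rrb2sb} applies at all — is precisely what the preamble to the statement records, and is standard for skew left braces. If one wanted a fully self-contained argument, the only point needing a word of care would be verifying the relative Rota-Baxter relation $\Id_H(h_1) \circ \Id_H(h_2) = \Id_H\!\big(h_1 \cdot \lambda^H_{h_1}(h_2)\big)$ directly; this unwinds by the same cancellation to the tautology $h_1 \circ h_2 = h_1 \circ h_2$, confirming that no separate difficulty is hidden there.
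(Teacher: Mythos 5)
Your proposal is correct: the identity $h_1 \circ_{\Id_H} h_2 = h_1 \cdot \lambda^H_{h_1}(h_2) = h_1 \cdot \bigl( h_1^{-1} \cdot (h_1 \circ h_2) \bigr) = h_1 \circ h_2$ is exactly what the proposition asserts, and your closing remark correctly observes that the relative Rota-Baxter identity for $\Id_H$ unwinds to the same tautology. The paper itself gives no proof here — it cites \cite[Proposition 3.8]{NM1} — and your one-line cancellation is precisely the verification that citation encapsulates, so there is nothing to add or compare.
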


\begin{prop}\label{slb to rrb homo} \cite[Proposition 4.4]{NM1}
A homomorphism of skew left braces induces a homomorphism of corresponding relative Rota-Baxter groups.
\end{prop}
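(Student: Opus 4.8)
The plan is to use the canonical assignment from Proposition~\ref{rrb induced by slb}, which sends a skew left brace $(H,\cdot,\circ)$ to the relative Rota-Baxter group $(H^{(\cdot)}, H^{(\circ)}, \lambda^H, \Id_H)$, and to check that a single skew left brace homomorphism produces the required pair of group homomorphisms. Concretely, given a homomorphism $f:(H,\cdot,\circ) \to (K,\cdot,\circ)$ of skew left braces, I would set $\psi = f$ viewed as a map $H^{(\cdot)} \to K^{(\cdot)}$ and $\eta = f$ viewed as a map $H^{(\circ)} \to K^{(\circ)}$. Since $f$ is by definition compatible with both operations, $\psi$ is a homomorphism of the additive groups and $\eta$ is a homomorphism of the multiplicative groups, so $(\psi,\eta)$ is at least a candidate morphism of the underlying data.

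It then remains to verify the two defining compatibility conditions of a homomorphism of relative Rota-Baxter groups from \eqref{rbb datum morphism}, namely $\eta\, R = S\, \psi$ and $\psi\, \phi_g = \varphi_{\eta(g)}\, \psi$, where here $R = \Id_H$, $S = \Id_K$, $\phi = \lambda^H$ and $\varphi = \lambda^K$. The first condition is immediate, since both sides equal $f$ on the underlying set because the relative Rota-Baxter operators attached to skew left braces are identity maps. For the second condition I would expand the definition $\lambda^H_g(b) = g^{-1} \cdot (g \circ b)$ and compute $f(\lambda^H_g(b)) = f(g^{-1}) \cdot f(g \circ b) = f(g)^{-1} \cdot \big(f(g) \circ f(b)\big) = \lambda^K_{f(g)}(f(b))$, where the first equality uses that $f$ preserves $\cdot$, the second that $f$ preserves both $\cdot$ (hence additive inverses) and $\circ$, and the last is the definition of $\lambda^K$. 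This establishes $\psi\, \lambda^H_g = \lambda^K_{\eta(g)}\, \psi$ for all $g$, completing the verification that $(\psi,\eta)$ is a homomorphism of relative Rota-Baxter groups.

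I do not expect any serious obstacle here, as the argument is a direct unwinding of definitions; the only point requiring care is bookkeeping of which group operation ($\cdot$ versus $\circ$) is in play at each step, in particular that the inverse appearing in $\lambda^H_g$ is the additive inverse and is carried to the additive inverse in $K$ by $f$. Functoriality (identities to identities, composites to composites) follows automatically once the assignment $f \mapsto (f,f)$ is established, since composition of skew left brace homomorphisms manifestly corresponds to composition of the induced pairs.
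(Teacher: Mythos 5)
Your proposal is correct and is precisely the expected argument: the paper states this result only as a citation to \cite[Proposition 4.4]{NM1} without reproducing a proof, and your direct verification that the pair $(f,f)$ satisfies both conditions of \eqref{rbb datum morphism} — the operator condition holding trivially since both relative Rota-Baxter operators are identity maps, and the action condition $f(\lambda^H_g(b)) = \lambda^K_{f(g)}(f(b))$ following from $f$ preserving both $\cdot$ and $\circ$ — is the canonical proof. The only point of care, which you correctly flag, is that the inverse in $\lambda^H_g(b) = g^{-1}\cdot(g\circ b)$ is the additive one, preserved by $f$ because $f$ is a homomorphism of the additive groups.
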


In view of the preceding discussion, we may inquire whether every extension of skew left braces arise from an extension of relative Rota-Baxter groups.

\begin{prop}\label{skew brace extension from rb extension}
Every extension of skew left braces is induced by an extension of relative Rota-Baxter groups.
\end{prop}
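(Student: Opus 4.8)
The plan is to start from an arbitrary extension of skew left braces
$$\mathcal{E} : \quad {\bf 1} \longrightarrow I \stackrel{i}{\longrightarrow} E \stackrel{\pi}{\longrightarrow} Q \longrightarrow {\bf 1}$$
and build a candidate extension of relative Rota-Baxter groups by applying the functorial construction from Proposition \ref{rrb induced by slb} to each of the three skew left braces in sight. Explicitly, I would replace $I$, $E$, $Q$ by the relative Rota-Baxter groups $(I^{(\cdot)}, I^{(\circ)}, \lambda^I, \Id_I)$, $(E^{(\cdot)}, E^{(\circ)}, \lambda^E, \Id_E)$, and $(Q^{(\cdot)}, Q^{(\circ)}, \lambda^Q, \Id_Q)$ respectively, and replace the brace morphisms $i$ and $\pi$ by the pairs $(i, i)$ and $(\pi, \pi)$. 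Here I am using that a single brace homomorphism $f$ is simultaneously a homomorphism of the additive group $E^{(\cdot)}$ and the multiplicative group $E^{(\circ)}$, so the same underlying set-map serves as both components of a relative Rota-Baxter group morphism; that the resulting pairs are genuine morphisms of relative Rota-Baxter groups is exactly the content of Proposition \ref{slb to rrb homo}.

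The next step is to verify that the candidate sequence
$$\widetilde{\mathcal{E}} : \quad {\bf 1} \longrightarrow (I^{(\cdot)}, I^{(\circ)}, \lambda^I, \Id_I) \stackrel{(i,i)}{\longrightarrow} (E^{(\cdot)}, E^{(\circ)}, \lambda^E, \Id_E) \stackrel{(\pi,\pi)}{\longrightarrow} (Q^{(\cdot)}, Q^{(\circ)}, \lambda^Q, \Id_Q) \longrightarrow {\bf 1}$$
is indeed an extension of relative Rota-Baxter groups. The exactness conditions transfer almost verbatim: since $i$ is an injective brace morphism, it is injective on both $I^{(\cdot)}$ and $I^{(\circ)}$, so $(i,i)$ is an embedding; dually $(\pi,\pi)$ is an epimorphism; and the identification $\operatorname{im}(i) = \ker(\pi)$ in $E$ yields the two required equalities of images and kernels in both the additive and multiplicative layers simultaneously. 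I would also confirm that the kernel, which here is literally $(\operatorname{im} i, \operatorname{im} i, \lambda^E|, \Id_E|)$, forms an ideal in the sense of Definition \ref{defn ideal rbb-datum}; the normality of $\operatorname{im}(i)$ in both group structures follows from $\operatorname{im}(i) = \ker(\pi)$, and the compatibility condition \eqref{I2} translates into a statement about $\lambda^E$ that is forced by the defining brace identity applied to elements of the ideal.

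Finally I must check that applying the induced-skew-brace functor of Proposition \ref{rrb2sb} to $\widetilde{\mathcal{E}}$ recovers the original extension $\mathcal{E}$, which is what the phrase ``is induced by'' demands. This is the cleanest part: Proposition \ref{rrb induced by slb} guarantees that the skew left brace induced by $(E^{(\cdot)}, E^{(\circ)}, \lambda^E, \Id_E)$ is precisely $E$ itself, and likewise for $I$ and $Q$, while Remark \ref{extsb} tells us that the induced morphisms are exactly $i$ and $\pi$. Hence the skew-brace extension $\widetilde{\mathcal{E}}_{SB}$ associated to $\widetilde{\mathcal{E}}$ is $\mathcal{E}$ on the nose.

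The main obstacle I anticipate is not any single hard computation but rather the bookkeeping in the ideal verification: one must be careful that the condition $\phi_\ell(h)h^{-1} \in K$ from \eqref{I2}, when specialized to $\phi = \lambda^E$ and $K = \operatorname{im}(i)$, genuinely holds, since this is where the brace axiom rather than mere group theory enters. Concretely, for $\ell \in \operatorname{im}(i)$ and $h \in E$ one needs $\lambda^E_\ell(h) \cdot h^{-1} = \ell^{-1} \cdot (\ell \circ h) \cdot h^{-1}$ to lie in $\operatorname{im}(i)$, and I would establish this by pushing the expression through $\pi$ and using that $\pi(\ell) = 1$ together with the fact that $\lambda^Q_1 = \operatorname{id}$. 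Everything else reduces to the observation that a skew left brace homomorphism respects both group operations at once, so the two-component structure of a relative Rota-Baxter group morphism is obtained for free.
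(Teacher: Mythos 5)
Your proposal is correct and follows essentially the same route as the paper: both construct the sequence of induced relative Rota-Baxter groups $(E^{(\cdot)}, E^{(\circ)}, \lambda^E, \Id_E)$ (and likewise for $I$, $Q$) with the morphism pairs $(i,i)$ and $(\pi,\pi)$ via Proposition \ref{slb to rrb homo}, and then observe via Proposition \ref{rrb induced by slb} that the induced skew left brace extension recovers $\mathcal{E}$ exactly. The only difference is that you spell out the exactness and ideal verifications (such as condition \eqref{I2} via pushing through $\pi$), which the paper dismisses as immediate.
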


\begin{proof}
Consider the extension
$$ \mathcal{E}: \quad {\bf 1} \longrightarrow K \stackrel{i}{\longrightarrow}  H \stackrel{\pi}{\longrightarrow} A \longrightarrow {\bf 1}$$
 of skew left braces. Then we have  relative Rota-Baxter groups $(K^{(\cdot)}, K^{(\circ)}, \lambda^K, \Id_K)$, $(H^{(\cdot)}, H^{(\circ)}, \lambda^H, \Id_H)$ and $(A^{(\cdot)}, A^{(\circ)}, \lambda^A, \Id_A)$. Since $i$ and $\pi$ are morphisms of skew left braces, using Proposition \ref{slb to rrb homo}, we obtain the following extension of relative Rota-Baxter groups
$$\mathcal{E}_{RRB} : \quad  {\bf 1} \longrightarrow (K^{(\cdot)}, K^{(\circ)}, \lambda^K, \Id_K) \stackrel{(i, i)}{\longrightarrow}  (H^{(\cdot)}, H^{(\circ)}, \lambda^H, \Id_H) \stackrel{(\pi, \pi)}{\longrightarrow} (A^{(\cdot)}, A^{(\circ)}, \lambda^A, \Id_A) \longrightarrow {\bf 1}.$$
It is immediate that the extension of skew left  braces induced by $\mathcal{E}_{RRB}$  is identical to $\mathcal{E}$, which completes the proof.
\end{proof}

\begin{prop}
An extension
$$\mathcal{E} : \quad {\bf 1} \longrightarrow (K,L, \alpha,S ) \stackrel{(i_1, i_2)}{\longrightarrow}  (H,G, \phi, R) \stackrel{(\pi_1, \pi_2)}{\longrightarrow} (A,B, \beta, T) \longrightarrow {\bf 1}$$
of relative Rota-Baxter groups	induces extensions of groups $$\mathcal{E}_1 : \quad  1 \longrightarrow K \stackrel{i_1}{\longrightarrow}  H \stackrel{\pi_1 }{\longrightarrow} A \longrightarrow 1 \quad \textrm{and} \quad \mathcal{E}_2 : \quad 1 \longrightarrow L \stackrel{i_2}{\longrightarrow}  G \stackrel{\pi_2 }{\longrightarrow} B \longrightarrow 1.$$
	Furthermore, $ (K,L, \alpha,S )$ is an ideal of $(H,G, \phi, R)$ and the quotient relative Rota-Baxter group $(H, G, \phi, R)/ (K,L, \alpha,S ) $ is isomorphic to $(A, B, \beta, T).$
\end{prop}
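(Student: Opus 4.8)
The plan is to handle the three assertions in turn, reducing each to a component-wise application of standard group theory together with the morphism conditions \eqref{rbb datum morphism}. Throughout I use the convention fixed in the definition of an extension, identifying $K$ and $L$ with their images under $i_1$ and $i_2$.

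First, for the two group extensions $\mathcal{E}_1$ and $\mathcal{E}_2$: by the definition of an embedding of relative Rota-Baxter groups, $i_1$ and $i_2$ are injective group homomorphisms, and by the definition of an epimorphism, $\pi_1$ and $\pi_2$ are surjective. The exactness condition $(\IM(i_1), \IM(i_2), \phi|, R|) = (\Ker(\pi_1), \Ker(\pi_2), \phi|, R|)$ unpacks componentwise to $\IM(i_1) = \Ker(\pi_1)$ and $\IM(i_2) = \Ker(\pi_2)$. Hence each of $\mathcal{E}_1$ and $\mathcal{E}_2$ is a short exact sequence of groups; this step is immediate from the definitions.

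Second, to show $(K, L, \alpha, S)$ is an ideal: we have $K = \Ker(\pi_1) \trianglelefteq H$ and $L = \Ker(\pi_2) \trianglelefteq G$, which gives \eqref{I0}. For \eqref{I1}, I would use the morphism identity $\pi_1 \phi_g = \beta_{\pi_2(g)} \pi_1$: applying $\pi_1$ to $\phi_g(k)$ for $k \in K$ yields $\beta_{\pi_2(g)}(\pi_1(k)) = 1$, so $\phi_g(k) \in \Ker(\pi_1) = K$. For \eqref{I2}, given $h \in H$ and $\ell \in L$, I would compute $\pi_1(\phi_\ell(h) h^{-1}) = \beta_{\pi_2(\ell)}(\pi_1(h)) \, \pi_1(h)^{-1}$; since $\ell \in \Ker(\pi_2)$ the automorphism is trivial and this collapses to the identity, placing $\phi_\ell(h) h^{-1}$ in $K$. (Alternatively, this is exactly the statement, already recorded after \eqref{rbb datum morphism}, that the kernel of a homomorphism of relative Rota-Baxter groups is an ideal.)

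Third, for the isomorphism of quotients: by Theorem \ref{subs} the quotient $(H/K, G/L, \overline{\phi}, \overline{R})$ is a relative Rota-Baxter group, and the first isomorphism theorem applied to the surjections $\pi_1$ and $\pi_2$ furnishes group isomorphisms $\overline{\pi_1}: H/K \to A$ and $\overline{\pi_2}: G/L \to B$ with $\overline{\pi_1}(\overline{h}) = \pi_1(h)$ and $\overline{\pi_2}(\overline{g}) = \pi_2(g)$. It then remains to verify that $(\overline{\pi_1}, \overline{\pi_2})$ satisfies the two conditions of \eqref{rbb datum morphism}, namely $\overline{\pi_2} \, \overline{R} = T \, \overline{\pi_1}$ and $\overline{\pi_1} \, \overline{\phi}_{\overline{g}} = \beta_{\overline{\pi_2}(\overline{g})} \, \overline{\pi_1}$; after unwinding the definitions of $\overline{R}$ and $\overline{\phi}$ from Theorem \ref{subs}, each reduces to the corresponding morphism identity for $(\pi_1, \pi_2)$, and both hold. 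Since both components are isomorphisms, $(\overline{\pi_1}, \overline{\pi_2})$ is an isomorphism of relative Rota-Baxter groups. The point requiring care is not a deep obstacle but the bookkeeping: one must consistently use the identification of $K, L$ with their images, and must confirm that $\overline{\phi}$ and $\overline{R}$ are well-defined before checking compatibility — this well-definedness is precisely what Theorem \ref{subs} supplies, after which the remaining verifications are mechanical.
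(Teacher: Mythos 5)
Your proof is correct and takes essentially the same route as the paper's: the group extensions are immediate from the definitions, the ideal property is the observation that $(K,L,\alpha,S)$ is the kernel of $(\pi_1,\pi_2)$ (which you both cite and verify directly), and the quotient isomorphism comes from the induced maps $\bar{\pi}_1, \bar{\pi}_2$ together with checking the two conditions of \eqref{rbb datum morphism} against Theorem \ref{subs}. The only difference is that you spell out the componentwise verifications that the paper compresses into ``immediate.''
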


\begin{proof}
The first assertion is immediate. Since $(K,L, \alpha,S)$ is the kernel of the homomorphism $(\pi_1, \pi_2)$ of relative Rota-Baxter groups, it follows that it is an ideal of $(H,G, \phi, R)$. Let $\bar{\pi}_1: H/K \rightarrow A$ and $\bar{\pi}_2: G/L \rightarrow B$ be induced isomorphisms of groups given by $\bar{\pi}_1(\bar{h})=\pi_1(h)$ and $\bar{\pi}_2(\bar{g})=\pi_2(g)$ for $h \in H$ and $g \in G$. Further, since $\bar{\pi}_2 \bar{R}= T\bar{\pi}_1$ and $\bar{\pi}_1 \bar{\phi}_{\bar{g}}= \beta_{\bar{\pi}_2(\bar{g})} \bar{\pi}_1$ for all $\bar{g} \in G/L$, it follows that $(\bar{\pi}_1, \bar{\pi}_2): (H/K, G/L, \bar{\phi}, \bar{R}) \to (A, B, \beta, T)$ is an isomorphism of relative Rota-Baxter groups. 
\end{proof}

Given an extension
$$\mathcal{E} : \quad  {\bf 1} \longrightarrow (K,L, \alpha,S ) \stackrel{(i_1, i_2)}{\longrightarrow}  (H,G, \phi, R) \stackrel{(\pi_1, \pi_2)}{\longrightarrow} (A,B, \beta, T) \longrightarrow {\bf 1}$$
 of  relative Rota-Baxter groups, we obtained the following  extensions of groups
$$\mathcal{E}_1 : \quad  1 \longrightarrow K \stackrel{i_1}{\longrightarrow}  H \stackrel{\pi_1 }{\longrightarrow} A \longrightarrow 1 \quad \textrm{and} \quad \mathcal{E}_2 : \quad  1 \longrightarrow L \stackrel{i_2}{\longrightarrow}  G \stackrel{\pi_2 }{\longrightarrow} B \longrightarrow 1.$$

Since $(\pi_1, \pi_2)$ is a homomorphism of relative Rota-Baxter groups, we have 
\begin{equation}\label{compat pi1 pi2}
\pi_2 \,R= T \, \pi_1 \quad \textrm{and} \quad \pi_1\, \phi_g= \beta_{\pi_2(g)} \,\pi_1
\end{equation}
for all $g \in G$. Let $s_{H}$ and $s_{G}$ be normalised set-theoretic sections to $\mathcal{E}_1 $ and $\mathcal{E}_2$, respectively. The ordered pair $(s_H, s_G)$ will be referred as a set-theoretic section to the extension $\mathcal{E}$. Then each element $h \in H$ and $g \in G$ can be uniquely written as  $h= s_H(a)\; k$ and $g=s_G(b) \; l$ for some $k \in K$, $l \in L$ and $a \in A$, $b \in B$. We have 
\begin{eqnarray}\label{leqn}
	\phi_g(h) &= & \phi_{ s_G(b) \; l}( s_H(a) \; k) \notag \\
&= & \phi_{ s_G(b)}( \phi_l(s_H(a)) \; \phi_l (k))  \notag  \\
	&= &   \phi_{ s_G(b)}( s_H(a)) \;   \phi_{s_G(b)} (f(l,a) \alpha_l(k)),
\end{eqnarray}
where the map $f: L \times A \longrightarrow K$ is defined by
\begin{equation}\label{fdefn}
 f(l,a)= s_H( a)^{-1} \phi_l(s_H(a)) .
\end{equation}
 Further, we can write 
\begin{equation}\label{atilde1}
	\phi_{s_G(b)}(s_H(a)) =   s_H(\tilde{a}) \;\rho(a, b) 
\end{equation}
for unique elements  $\rho(a, b) \in K$ and $\tilde{a} \in A$. This gives a map $\rho: A \times B \longrightarrow K$. Now applying $\pi_1$ on both the sides of \eqref{atilde1} and using \eqref{compat pi1 pi2}, we get 
\begin{equation}\label{va}
	\beta{_b(a)}=  \tilde{a}.
\end{equation}
Using \eqref{va} in \eqref{atilde1}, we can write
\begin{equation}\label{atilde}
	\phi_{s_G(b)}(s_H(a)) =   s_H(\beta{_b(a)}) \; \rho(a, b).
\end{equation}
Using \eqref{atilde} in \eqref{leqn}, we have 
\begin{equation}\label{feqn}
	\phi_g(h)=   s_H(\beta{_b(a)})~ \rho(a,b)  ~ \phi_{s_G(b)} (f(l,a)\alpha_l(k)) .
\end{equation}
\par 

Next, we see how a relative Rota-Baxter operator can be defined on an extension of relative Rota-Baxter groups  $(K,L, \alpha,S )$ and $(A,B, \beta, T)$. If $h=s_H(a) k$ is an element of $H$, then we have  
\begin{equation}\label{RRB1}
 R( s_H(a) k  ) 	= R(s_H(a) \phi_{R(s_H(a))}(\phi^{-1}_{R(s_H(a))}(k)) )=  R(s_H(a)) R(\phi^{-1}_{R(s_H(a))}(k)).
\end{equation}

Let $R(s_H(a))=s_G(b)  \chi(a) $ for unique elements $b \in B$ and $\chi(a) \in L$. This gives a map $\chi:A \longrightarrow L$. Applying $\pi_2$ on both the sides of \eqref{RRB1} and using \eqref{compat pi1 pi2}, we get $T(a)=b$, and hence
\begin{equation}\label{RRB2}
R(s_H(a)) = s_G(T(a))  ~ \chi(a).
\end{equation}
Putting  value of $R(s_H(a))$ from \eqref{RRB2} in \eqref{RRB1}, we get

\begin{equation}\label{relativecon}
	R( s_H(a)   k  )  =   s_G(T(a))  \chi(a) R(\phi^{-1}_{ s_G(T(a)) \;  \chi(a)}( k)).
\end{equation}

\begin{prop}\label{construction of actions}
Consider the abelian extension 
$$\mathcal{E} : \quad  {\bf 1} \longrightarrow (K,L, \alpha,S ) \stackrel{(i_1, i_2)}{\longrightarrow}  (H,G, \phi, R) \stackrel{(\pi_1, \pi_2)}{\longrightarrow} (A,B, \beta, T) \longrightarrow {\bf 1}$$
of relative Rota-Baxter groups. Then the following hold:
\begin{enumerate}
\item  The map  $\nu: B \rightarrow \Aut(K)$ defined by 
\begin{equation}\label{nuact}
\nu_b(k)= \phi_{s_G(b)}(k)
\end{equation}
 for $b \in B$ and $k \in K$, is a homomorphism of groups.
 \item  The $\mu: A \rightarrow \Aut(K)$ defined by 
\begin{equation}\label{muact}
\mu_a(k)=s_H(a)^{-1}\, k\, s_H(a)
\end{equation}
 for $a \in A$ and $ k \in K$, is an anti-homomorphism of groups.
\item The map $\sigma: B \rightarrow \Aut(L)$ defined by 
\begin{equation}\label{sigmaact}
\sigma_b(l)=s_G(b)^{-1}\,l\, s_G(b)
\end{equation}
 for $b \in B$ and $ l \in K$, is an anti-homomorphism of groups.
\end{enumerate}
Further, all the maps are independent of the choice of a section to $\mathcal{E}$.
\end{prop}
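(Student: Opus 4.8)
The plan is to treat each of the three maps as the restriction, to a normal abelian subgroup, of an honest (anti-)action of $G$ or $H$, and to show that the section enters the formula only through a correction factor lying in $L$ (for $\nu$ and $\sigma$) or in $K$ (for $\mu$), which is then annihilated either by the triviality of $\alpha$ or by the abelianness of $K$ and $L$. Before checking multiplicativity I would first record well-definedness, i.e.\ that the maps actually land in $\Aut(K)$ or $\Aut(L)$. For $\nu$ this uses \eqref{I1}: since $\phi_g(K)\subseteq K$ for every $g\in G$, and likewise for $g^{-1}$, we get $\phi_{s_G(b)}(K)=K$, so $\nu_b$ restricts to an automorphism of $K$. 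For $\mu$ and $\sigma$ it uses \eqref{I0}: because $K\trianglelefteq H$ and $L\trianglelefteq G$, conjugation by $s_H(a)$ preserves $K$ and conjugation by $s_G(b)$ preserves $L$.

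For the homomorphism property of $\nu$, I would write $s_G(b_1)s_G(b_2)=s_G(b_1b_2)\,t$ for a unique $t\in L$ (both sides project to $b_1b_2$ under $\pi_2$), and compute
$$\nu_{b_1}\nu_{b_2}(k)=\phi_{s_G(b_1)s_G(b_2)}(k)=\phi_{s_G(b_1b_2)}\big(\phi_t(k)\big).$$
The crucial step is that $\phi_t(k)=\alpha_t(k)=k$, since $\phi$ restricted to $L$ is $\alpha$ and $\alpha$ is trivial because $\mathcal{E}$ is an abelian extension; hence $\nu_{b_1}\nu_{b_2}=\nu_{b_1b_2}$. For $\mu$ and $\sigma$, which are conjugation maps, the anti-multiplicativity is essentially built in: writing $s_H(a_1)s_H(a_2)=s_H(a_1a_2)\,\kappa$ with $\kappa\in K$ gives
$$\mu_{a_2}\mu_{a_1}(k)=\big(s_H(a_1a_2)\kappa\big)^{-1}k\,\big(s_H(a_1a_2)\kappa\big)=\kappa^{-1}\,\mu_{a_1a_2}(k)\,\kappa,$$
and since $\mu_{a_1a_2}(k)\in K$ with $K$ abelian, the $\kappa$'s cancel, yielding $\mu_{a_2}\mu_{a_1}=\mu_{a_1a_2}$. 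The argument for $\sigma$ is identical with $K,H,s_H$ replaced by $L,G,s_G$, using that $L$ is abelian.

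For independence of the section I would compare two sections via $s_G'(b)=s_G(b)\,t_b$ and $s_H'(a)=s_H(a)\,k_a$ with $t_b\in L$ and $k_a\in K$, and rerun the same cancellations: for $\nu$ we have $\phi_{s_G'(b)}(k)=\phi_{s_G(b)}(\phi_{t_b}(k))=\phi_{s_G(b)}(k)$, again because $\alpha$ is trivial; for $\mu$ and $\sigma$ the conjugating correction $k_a$ (resp.\ $t_b$) lies in the abelian group $K$ (resp.\ $L$) and hence commutes with the conjugate, leaving the value unchanged.

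I do not expect a genuinely hard step; the only points needing care are bookkeeping. The first is making sure the maps are automorphisms rather than mere endomorphisms, for which the two-sided invariance $\phi_{s_G(b)}(K)=K$ and the normality of $K,L$ are needed. The second, and more conceptually important, is invoking the correct hypothesis in the correct place: triviality of $\alpha$ is what kills the $L$-factor for $\nu$, whereas it is the abelianness of $K$ and $L$ (not triviality of $\alpha$) that kills the conjugating factors for $\mu$ and $\sigma$. Keeping these two uses of the abelian-extension hypothesis distinct is the main thing to get right.
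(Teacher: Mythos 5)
Your proof is correct and follows essentially the same route as the paper's: express the defect $s(xy)^{-1}s(x)s(y)$ as an element of the kernel, then kill it using triviality of $\alpha$ for $\nu$ and abelianness of $K$ (resp.\ $L$) for $\mu$ (resp.\ $\sigma$), with the same argument for section-independence. Your explicit check that the maps land in $\Aut(K)$ and $\Aut(L)$ (via the ideal conditions \eqref{I0} and \eqref{I1}) is a small addition the paper leaves implicit, but it does not change the argument.
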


\begin{proof}
If $b_1, b_2 \in B$, then $s_G(b_1b_2) \tau_2(b_1, b_2)= s_G(b_1) s_G(b_2) $ for some $ \tau_2(b_1, b_2) \in L$. This gives
$$\nu_{b_1b_2}(k)= \phi_{s_G(b_1b_2)}(k)= \phi_{s_G(b_1) s_G(b_2)  \tau_2(b_1, b_2)^{-1}}(k)=\phi_{s_G(b_1)} \phi_{s_G(b_2)} \phi_{ \tau_2(b_1, b_2)^{-1}}(k)$$
and
$$\nu_{b_1}\nu_{b_2}(k)= \phi_{s_G(b_1)} \phi_{s_G(b_2)}(k).$$
Since $\phi_{ \tau_2(b_1, b_2)^{-1}}=\alpha_{\tau_2(b_1, b_2)^{-1}}=\Id_K$, we have $\phi_{ \tau_2(b_1, b_2)^{-1}}(k)=k$, and hence $\nu_{b_1b_2}= \nu_{b_1}\nu_{b_2}$. If $t_G$ is another section, then we have $s_G(b)=t_G(b) \ell$ for some $\ell \in L$. Since  $\phi_\ell(k)=k$, we have $\phi_{s_G(b)}(k)= \phi_{t_G(b) \ell}(k)=\phi_{t_G(b)} \phi_{\ell}(k)=\phi_{t_G(b)}(k)$, and hence $\nu$ is independent of the choice of the section. This proves assertion (1).
\par

If $a_1, a_2 \in A$, then $s_H(a_1a_2) \tau_1(a_1, a_2)= s_H(a_1) s_H(a_2)$ for some $ \tau_1(a_1, a_2) \in K$. This gives
\begin{eqnarray*}
\mu_{a_1a_2}(k) &=& s_H(a_1a_2)^{-1} ~k ~s_H(a_1a_2)\\
 &=&   \tau_1(a_1, a_2) s_H(a_2)^{-1}s_H(a_1)^{-1} ~k~s_H(a_1) s_H(a_2) \tau_1(a_1, a_2)^{-1}\\
 &=&   \tau_1(a_1, a_2) s_H(a_2)^{-1} \, \mu_{a_1}(k) \,s_H(a_2) \tau_1(a_1, a_2)^{-1}\\
  &=&   \tau_1(a_1, a_2) \, \mu_{a_2}\mu_{a_1}(k) \, \tau_1(a_1, a_2)^{-1}\\
    &=&   \mu_{a_2}\mu_{a_1}(k), \quad \textrm{since}~ K~ \textrm{is abelian}.
\end{eqnarray*}
Again, since $K$ is abelian, it follows that $\mu$ is independent of the choice of the section $s_H$, which proves (2). Proof of assertion (3) is analogous.
\end{proof}

Henceforth, we assume that all extensions of relative Rota-Baxter groups are abelian. Let  $$\mathcal{E} : \quad {\bf 1} \longrightarrow (K,L, \alpha,S ) \stackrel{(i_1, i_2)}{\longrightarrow}  (H,G, \phi, R) \stackrel{(\pi_1, \pi_2)}{\longrightarrow} (A,B, \beta, T) \longrightarrow {\bf 1}$$
be an abelian extension of relative Rota-Baxter groups and $(s_H, s_G)$ a set-theoretic section to $\mathcal{E}$. The classical extension theory of groups gives the following (see \cite{MR0672956}):

\begin{enumerate}
\item The map $\tau_1: A \times A \rightarrow K$ given by
\begin{equation}\label{mucocycle}
\tau_1(a_1, a_2)= s_H(a_1 a_2)^{-1}s_H(a_1)s_H(a_2)
\end{equation}
for $a_1, a_2 \in A$ is a group 2-cocycle with respect to the action $\mu$.
\item The map $\tau_2: B \times B \rightarrow L$ given by
\begin{equation}\label{sigmacocycle}
	\tau_2(b_1, b_2)= s_G(b_1 b_2)^{-1}s_G(b_1)s_G(b_2)
\end{equation}
$b_1, b_2 \in B$ is a group 2-cocycle with respect to the action $\sigma$.
\end{enumerate}

Given the abelian extension $\mathcal{E}$, using extension theory of groups, we can take $H= A \times_{\tau_1} K $  with the group operation given by 
$$(a_1,k_1) (a_2, k_2)=\big(a_1 a_2,~ \mu_{a_2}(k_1)\, k_2\, \tau_1(a_1, a_2) \big).$$
Similarly, we can take $G=B \times_{\tau_2} L$ with the group operation given by
$$(b_1,l_1) (b_2, l_2)=\big(b_1 b_2, ~\sigma_{b_2}(l_1)\,l_2\, \tau_2(b_1, b_2)\big).$$

It follows from \eqref{feqn} and \eqref{relativecon} that the maps $\phi: B \times_{\tau_2} L \longrightarrow \Aut(A \times_{\tau_1} K )$ and  $R:  A \times_{\tau_1} K \longrightarrow B \times_{\tau_2} L$ are given by
\begin{eqnarray*}
\phi_{(b,l)}(a,k) &= & \big(\beta{_{b}(a)}, ~\rho(a,b) \,\nu_b(f(l,a)k)\big),\\
R(a,k) &= & \big(T(a), ~\chi(a) \, S(\nu^{-1}_{T(a)}(k))\big)
\end{eqnarray*} 
for $(a,k) \in A \times_{\tau_1} K$ and $(b,l) \in B \times_{\tau_2} L$. Now, we determine conditions on $\rho$ and $\nu$ for $\phi$ to be a group homomorphism. 

Let $(b_1,l_1), (b_2,l_2) \in B \times_{\tau_2} L$ and $(a,k) \in A \times_{\tau_1} K$. Then we have 
\begin{eqnarray}\label{iso1}
&& \phi_{(b_1,l_1) (b_2, l_2)}(a,k)\\
&=& \phi_{(b_1 b_2, \sigma_{b_2}(l_1)l_2\tau_2(b_1, b_2))}(a,k)\notag \\
&=& \big ( \beta{_{b_1 b_2}(a)},~ \rho(a, b_1 b_2) \,\nu_{b_1 b_2}( f(\sigma_{b_2}(l_1)l_2\tau_2(b_1, b_2), a)k)\big) \notag
\end{eqnarray} 

and 
\begin{eqnarray}\label{iso2}
&& \phi_{(b_1,l_1)}(\phi_{(b_2,l_2)}(a,k))\\
&= & \phi_{(b_1,l_1)}\big( \beta{_{b_2}(a)}, ~\rho(a,b_2) \,\nu_{b_2}(f ( l_2,a)k)\big) \notag \\
&=& \big(\beta{_{b_1 b_2}(a)},~\rho(\beta{_{b_2}(a)}, b_1)\, \nu_{b_1}\big(f(l_1, \beta{_{b_2}(a))}\, \rho(a,b_2)\, \nu_{b_2}(f ( l_2,a)k) \big)\big). \notag
\end{eqnarray}

Comparing \eqref{iso1} and \eqref{iso2} and using the fact that $\beta$ and $\nu$ are homomorphisms, we get
\begin{eqnarray}\label{iso5}
&& \rho(a, b_1 b_2) \,\nu_{b_1 b_2}\big( f(\sigma_{b_2}(l_1)l_2\tau_2(b_1, b_2), a)\big)\\
&= & \rho(\beta{_{b_2}(a)}, b_1) \, \nu_{b_1}\big (f(l_1, \beta{_{b_2}(a))}\rho(a,b_2)  \nu_{b_2}(f ( l_2,a))\big) \notag.
\end{eqnarray}

Taking $l_1, l_2 =1$ in \eqref{iso5} gives
\begin{eqnarray}\label{iso3}
\rho(a, b_1 b_2) \,\nu_{b_1 b_2}\big( f(\tau_2(b_1, b_2), a)\big) &=& \rho(\beta{_{b_2}(a)}, b_1) \,\nu_{b_1}(\rho(a,b_2)),
\end{eqnarray} 
whereas taking $b_1, b_2=1$ yields
\begin{eqnarray}\label{iso4}
	f(l_1l_2,a) &=& f( l_1, a) \,f( l_2,a).
\end{eqnarray}

Using \refeq{iso3} and \eqref{iso4} in \eqref{iso5}, we get 
\begin{eqnarray}
	\nu_{b_2}(f(\sigma_{b_2}(l_1), a)) &=& f( l_1, \beta{_{b_2}(a))}.
\end{eqnarray}
Next, we determine conditions  under which $\phi_{(b,l)}$ is homomorphism for all $(b,l) \in  B \times_{\tau_2} L$. Let $(a_1,k_1), (a_2, k_2) \in A \times_{\tau_1} K$. Then, we have
\begin{eqnarray}\label{hom1}
&& \phi_{(b,l)} \big((a_1,k_1) (a_2, k_2)\big) \\
&=& \phi_{(b,l)}(a_1 a_2, ~\mu_{a_2}(k_1)\,k_2 \,\tau_1(a_1, a_2))\notag\\
&=& \big(\beta{_{b}(a_1 a_2)}, ~\rho(a_1 a_2,b)  \,\nu_b \big( f(l, a_1 a_2)\mu_{a_2}(k_1)k_2\tau_1(a_1, a_2)\big)\big) \notag
\end{eqnarray}
and
\begin{eqnarray}\label{hom2}
	&& \phi_{(b,l)}(a_1,k_1)\, \phi_{(b,l)}(a_2,k_2)\\
	&= & \big(\beta{_{b}(a_1)}, ~\rho(a_1,b)  \, \nu_{b} (f(l,a_1)k_1)\big) \,\big(\beta{_{b}(a_2)}, ~ \rho(a_2,b)  \,\nu_b(f(l,a_2) k_2)\big)\notag \\
	&= & \big(\beta{_{b}(a_1 a_2)}, ~\mu_{ \beta{_{b}(a_2)}} \big(\rho(a_1,b)   \nu_{b} (f(l,a_1)k_1)\big)\, \rho(a_2,b)  \nu_b(f(l,a_2) k_2) ~\tau_1(\beta{_{b}(a_1)}, \beta_{b}(a_2)) \big). \notag
\end{eqnarray}

Comparing  \eqref{hom1} and \eqref{hom2} give
\begin{eqnarray}\label{mainhom}
&&\rho(a_1 a_2,b)~  \nu_b \big( f(l, a_1 a_2)\mu_{a_2}(k_1)k_2\tau_1(a_1, a_2) \big) \\
&=& \mu_{ \beta{_{b}(a_2)}} \big(\rho(a_1,b)   \nu_{b} (f(l,a_1)k_1)\big) \, \rho(a_2,b)  \nu_b(f(l,a_2) k_2)  \,\tau_1(\beta{_{b}(a_1)}, \beta{_{b}(a_2))}. \notag
\end{eqnarray}
Taking $b=1$ in \eqref{mainhom} gives
\begin{eqnarray}\label{fcon}
	f(l, a_1 a_2) &=& \mu_{ a_2}(f(l,a_1))f(l, a_2),
\end{eqnarray}
whereas taking $a_1=1$ gives
\begin{eqnarray}\label{mcon}
	\nu_b(\mu_{a_2}(k_1)) &=&   \mu_{ \beta{_{b}(a_2)}}( \nu_{b} (k_1)).
\end{eqnarray}
Using \eqref{fcon} and \eqref{mcon} in \eqref{mainhom}, we obtain
\begin{eqnarray}\label{rho second equation}
	\rho(a_1 a_2, b) \,\nu_{b}(\tau_1(a_1, a_2)) &=& \mu_{ \beta{_{b}(a_2)}}(\rho(a_1,b)) \,\rho(a_2, b) \, \tau_1(\beta{_{b}(a_1)}, \beta{_{b}(a_2)}).
\end{eqnarray}

Putting succinctly, we have the following result.

	\begin{prop}\label{phiallcondn}
		The map $\phi: B \times_{\tau_2} L \longrightarrow \Aut(A \times_{\tau_1} K )$ defined by
		\begin{eqnarray}\label{phidefnprop}
			\phi_{(b,l)}(a,k) &= & \big(\beta{_{b}(a)}, ~\rho(a,b)\nu_b(f(l,a)k)\big),
		\end{eqnarray} 
		for $(a,k) \in A \times_{\tau_1} K$ and $(b,l) \in B \times_{\tau_2} L$, is a homomorphism if and only if the conditions 
		\begin{eqnarray*}
			\nu_b(\mu_{a_2}(k)) & = &   \mu_{ \beta{_{b}(a_2)}}( \nu_{b} (k)),\\
			\rho(a_1, b_1 b_2) \,\nu_{b_1 b_2}( f(\tau_2(b_1, b_2), a_1)) &=& \rho(\beta{_{b_2}(a_1)}, b_1) \,\nu_{b_1}(\rho(a,b_2)),\\
			f(l_1l_2,a_1) &=& f( l_1, a_1)f( l_2,a_1),\\
			f(l, a_1 a_2) &=& \mu_{ a_2}(f(l,a_1))f(l, a_2),\\
			\nu_{b_2}(f(\sigma_{b_2}(l_1), a_1)) &=& f( l_1, \beta{_{b_2}(a_1))},\\
			\rho(a_1 a_2, b_1) \,\nu_{b_1}(\tau_1(a_1, a_2)) &=& \mu_{ \beta{_{b_1}(a_2)}}(\rho(a_1,b_1)) \,\rho(a_2, b_1) \, \tau_1(\beta{_{b_1}(a_1)}, \beta{_{b_1}(a_2)}),
		\end{eqnarray*} 
hold for all $a_1, a_2 \in A$, $b_1, b_2 \in B$ and $k \in K$.
\end{prop}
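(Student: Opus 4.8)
The plan is to read the statement as the assertion that $\phi$ defines a genuine homomorphism $B \times_{\tau_2} L \to \Aut(A \times_{\tau_1} K)$, and to split this into two logically independent requirements: \textbf{(a)} for each fixed $(b,l)$ the map $\phi_{(b,l)}$ is an endomorphism of $A \times_{\tau_1} K$, and \textbf{(b)} the assignment $(b,l) \mapsto \phi_{(b,l)}$ is multiplicative, i.e.\ $\phi_{(b_1,l_1)(b_2,l_2)} = \phi_{(b_1,l_1)} \phi_{(b_2,l_2)}$. Before anything else I would record that, because the sections are normalised, one has $\beta_1 = \Id$, $\rho(a,1)=1$, $f(1,a)=1$ and $\nu_1 = \Id$, whence $\phi_{(1,1)} = \Id$. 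Consequently, once (b) is known, multiplicativity forces $\phi_{(b,l)} \phi_{(b,l)^{-1}} = \phi_{(1,1)} = \Id$ in both orders, so each endomorphism $\phi_{(b,l)}$ is automatically invertible and therefore an automorphism; no separate check of bijectivity is required, and it suffices to treat (a) and (b).

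The forward implication is exactly the chain of identities derived immediately before the statement. Requirement (a) is the homomorphism identity \eqref{mainhom}; its specialisations $b=1$ and $a_1=1$ produce \eqref{fcon} and \eqref{mcon}, and substituting these back yields \eqref{rho second equation} — these are the fourth, first and sixth listed conditions. Requirement (b) is the identity \eqref{iso5}; its specialisations $l_1=l_2=1$ and $b_1=b_2=1$ produce \eqref{iso3} and \eqref{iso4}, and substituting these back yields the fifth condition — these are the second, third and fifth listed conditions. Thus if $\phi$ is a homomorphism, all six conditions hold.

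For the converse I would run each derivation in reverse, using only facts already available: that $\nu_b$ is a homomorphism and each $\mu_a$ an automorphism of $K$ (Proposition \ref{construction of actions}), and that $K$ is abelian. Assuming conditions $1$, $4$ and $6$, I would expand both sides of \eqref{mainhom}, rewrite the term $f(l,a_1a_2)$ via $f(l,a_1a_2)=\mu_{a_2}(f(l,a_1))\,f(l,a_2)$ (condition $4$) and then apply $\nu_b\mu_{a_2}=\mu_{\beta_b(a_2)}\nu_b$ (condition $1$), and cancel the matching factors in the abelian group $K$; what remains is precisely condition $6$, so \eqref{mainhom} holds for all arguments and (a) follows. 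Symmetrically, assuming conditions $3$, $5$ and $2$, I would expand \eqref{iso5} using $f(l_1l_2,a)=f(l_1,a)f(l_2,a)$ (condition $3$) and $\nu_{b_2}(f(\sigma_{b_2}(l_1),a))=f(l_1,\beta_{b_2}(a))$ (condition $5$), cancel in $K$, and recover condition $2$, establishing (b). Together (a) and (b), with the normalisation remark above, give that $\phi$ is a homomorphism into $\Aut(A\times_{\tau_1}K)$.

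The computation is routine bookkeeping, so the only point requiring genuine care — the main, if mild, obstacle — is tracking which factors may be commuted past one another when collapsing the expanded products. Commutativity of $K$ is invoked repeatedly to separate the $k_1,k_2$ contributions (respectively the $l_1,l_2$ contributions) from the section-dependent cocycle terms, and it is exactly this separation that makes each substitution reversible and hence shows the six conditions are jointly necessary and sufficient.
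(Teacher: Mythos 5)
Your proposal is correct and follows essentially the same route as the paper, whose ``proof'' is precisely the derivation preceding the statement: comparing \eqref{iso1} with \eqref{iso2} and \eqref{hom1} with \eqref{hom2}, specialising to obtain \eqref{iso3}, \eqref{iso4}, \eqref{fcon}, \eqref{mcon}, and substituting back to get the remaining two conditions, all of which is reversible by commutativity of $K$. Your explicit treatment of the converse and the observation that multiplicativity together with $\phi_{(1,1)}=\Id$ makes each $\phi_{(b,l)}$ automatically bijective are points the paper leaves implicit, but they are refinements of the same argument rather than a different approach.
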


\begin{lemma}\label{properties of f}
Let $\mathcal{E}$ be an abelian extension of relative Rota-Baxter groups. Then the following hold:
\begin{enumerate}
\item The map $f: L \times A \longrightarrow K$ defined by
$$ f(l,a)= s_H( a)^{-1} \phi_l(s_H(a))$$
for $l \in L$ and $a \in A$, is independent of the choice of the section $s_H$.
\item $f(l_1l_2,a) = f( l_1, a)f( l_2,a)$ for all $l_1, l_2 \in L$ and $a \in A$.
\item $f(l, a_1 a_2) = \mu_{ a_2}(f(l,a_1))f(l, a_2)$ for all $l \in L$ and $a_1, a_2 \in A$.
\end{enumerate}
\end{lemma}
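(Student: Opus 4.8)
The plan is to reduce each of the three claims to a direct computation, relying on two structural facts that hold because the extension is abelian: the homomorphism $\alpha$ is trivial, and $K$ is an abelian group. Before anything else I would verify that $f$ is well-defined, i.e. that $f(l,a)\in K=\Ker(\pi_1)$. Applying $\pi_1$ to $f(l,a)=s_H(a)^{-1}\phi_l(s_H(a))$ and using the compatibility $\pi_1\phi_g=\beta_{\pi_2(g)}\pi_1$ from \eqref{compat pi1 pi2}, together with $l\in L=\Ker(\pi_2)$, gives $\pi_1(f(l,a))=a^{-1}\beta_1(a)=a^{-1}a=1$. Since $\alpha$ is trivial I will record the identity $\phi_l(k)=\alpha_l(k)=k$ for every $k\in K$ and $l\in L$; this is the cancellation used throughout. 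I will also use repeatedly the rearrangement $\phi_l(s_H(a))=s_H(a)\,f(l,a)$.

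For the section-independence (1), writing a second section as $s_H(a)=t_H(a)\,k_a$ with $k_a\in K$, a short expansion gives $f_s(l,a)=k_a^{-1}\,f_t(l,a)\,\phi_l(k_a)$; since $\phi_l(k_a)=k_a$ this becomes $k_a^{-1}f_t(l,a)k_a$, which equals $f_t(l,a)$ because $K$ is abelian, so $f_s=f_t$. For the additivity in the first variable (2), I would compute $f(l_1l_2,a)=s_H(a)^{-1}\phi_{l_1}(\phi_{l_2}(s_H(a)))$, substitute $\phi_{l_2}(s_H(a))=s_H(a)f(l_2,a)$, push $\phi_{l_1}$ through the resulting product, and use $\phi_{l_1}(f(l_2,a))=f(l_2,a)$ (triviality of $\alpha$) to arrive at $f(l_1,a)f(l_2,a)$.

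For the cocycle-type identity (3), I would invoke the group $2$-cocycle relation \eqref{mucocycle}, rewriting $s_H(a_1a_2)=s_H(a_1)s_H(a_2)\tau_1(a_1,a_2)^{-1}$, and then expand $f(l,a_1a_2)=s_H(a_1a_2)^{-1}\phi_l(s_H(a_1a_2))$. Distributing $\phi_l$ over the product and using $\phi_l(\tau_1(a_1,a_2)^{-1})=\tau_1(a_1,a_2)^{-1}$ leaves, after the $s_H(a_1)$ terms cancel, the expression $\tau_1(a_1,a_2)\,\mu_{a_2}(f(l,a_1))\,f(l,a_2)\,\tau_1(a_1,a_2)^{-1}$, where $\mu_{a_2}$ is the action from \eqref{muact}. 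Because $K\trianglelefteq H$ the element $\mu_{a_2}(f(l,a_1))$ lies in $K$, so the whole bracketed product lies in the abelian group $K$ and conjugation by $\tau_1(a_1,a_2)\in K$ is trivial; this yields $f(l,a_1a_2)=\mu_{a_2}(f(l,a_1))\,f(l,a_2)$.

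None of the steps presents a serious obstacle; the only points requiring care are bookkeeping ones, namely confirming that each factor which must be ``passed through'' $\phi_l$ or ``conjugated away'' genuinely lies in $K$, so that triviality of $\alpha$ and commutativity of $K$ apply. The identity (3) is the most delicate, precisely because of the spurious $\tau_1$ conjugation, which survives at the group level and disappears only once one observes that every factor in sight is a $K$-element.
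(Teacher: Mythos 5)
Your proof is correct. Part (1) is essentially identical to the paper's argument: both write the second section as $s_H(a)=t_H(a)k_a$ with $k_a\in K$, use triviality of $\alpha$ to fix $k_a$ under $\phi_l$, and then kill the resulting conjugation because $K$ is abelian. The difference lies in parts (2) and (3): the paper does not reprove them at all, but simply cites the identities \eqref{iso4} and \eqref{fcon}, which were obtained earlier by writing $H=A\times_{\tau_1}K$, $G=B\times_{\tau_2}L$ in coordinates, imposing that $\phi$ is a homomorphism $B\times_{\tau_2}L\to\Aut(A\times_{\tau_1}K)$ and that each $\phi_{(b,l)}$ is an automorphism, and then specializing ($b_1=b_2=1$ for (2), $b=1$ for (3)). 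You instead verify both identities intrinsically inside $H$: for (2) by expanding $\phi_{l_1l_2}=\phi_{l_1}\phi_{l_2}$ against the rearrangement $\phi_l(s_H(a))=s_H(a)f(l,a)$, and for (3) by substituting $s_H(a_1a_2)=s_H(a_1)s_H(a_2)\tau_1(a_1,a_2)^{-1}$ and absorbing the spurious $\tau_1$-conjugation using that all factors lie in the abelian group $K$. The mathematical content is the same (homomorphism property of $\phi$, triviality of $\alpha$, abelianness of $K$, and the conjugation action $\mu$), but your route makes the lemma self-contained at the cost of redoing computations the paper had already performed in coordinates; the paper's route is shorter but leaves the lemma dependent on the coordinatized extension analysis. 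Your preliminary check that $f(l,a)\in K$ (via $\pi_1$ and \eqref{compat pi1 pi2}) is a worthwhile addition that the paper leaves implicit.
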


\begin{proof}
Let $s_{H}$ and $t_H$ be two set-theoretic sections of $\mathcal{E}_1$. For each $a \in A$, there exists a unique element $\lambda(a) \in K$ such that $s_{H}(a)= t_H(a) \lambda(a)$. Note that $(K, L , \alpha, S)$ is an ideal of $(H, G , \phi, R)$ and is also a trivial relative Rota-Baxter group. This gives
\begin{eqnarray*}
f(l,a) &=& s_H( a)^{-1} \phi_l(s_H(a))\\
 &=&  \lambda(a)^{-1} t_H(a)^{-1} \phi_l(t_H(a)\lambda(a))\\
  &=&  \lambda(a)^{-1} t_H(a)^{-1} \phi_l(t_H(a)) \phi_l(\lambda(a))\\
  &=&  \lambda(a)^{-1} t_H(a)^{-1} \phi_l(t_H(a)) \lambda(a), \quad \textrm{since}~ \phi_l(\lambda(a))=\lambda(a)\\
  &=&   t_H(a)^{-1} \phi_l(t_H(a)), \quad \textrm{since}~ t_H(a)^{-1} \phi_l(t_H(a)) \in K.
\end{eqnarray*}
Thus, $f$ is independent of the choice of the section, which proves assertion (1). Assertions (2) and (3) are simply \eqref{iso4} and \eqref{fcon}, respectively.
\end{proof}

Our next aim is to determine conditions  under which $R$ becomes a relative Rota-Baxter operator. Let $(a_1,k_1)$ and $(a_2, k_2) \in A \times_{\tau_1} K$. Recall that $a_1 \circ_{T} a_2=a_1 \beta{}_{T(a_1)}(a_2)$. Then we have
\begin{eqnarray}\label{RRBcon}
	&& R\big( (a_1,k_1) \phi_{R(a_1,k_1)} (a_2, k_2) \big)\\
	&=& R \big( (a_1, k_1) \phi_{(T(a_1), ~\chi(a_1) S(\nu^{-1}_{T(a_1)}(k_1)))}(a_2, k_2) \big)\notag\\
	&=& R \big( (a_1, k_1) \big(\beta{_{T(a_1)}}(a_2), ~\rho(a_2, T(a_1)) \,\nu_{T(a_1)} \big(f(\chi(a_1) S(\nu^{-1}_{T(a_1)}(k_1)), a_2)  k_2 \big) \big)\big)\notag\\
	&=& R \big( a_1 \circ_{T} a_2,~ \mu_{\beta{_{T(a_1)}(a_2)}}(k_1) \, \rho(a_2, T(a_1)) \,\nu_{T(a_1)} \big(f(\chi(a_1) S(\nu^{-1}_{T(a_1)}(k_1)), a_2)k_2 \big) \,\tau_1(a_1, \beta{_{T(a_1)}(a_2)}) \big)\notag\\
	&=& \big(T(a_1 \circ_{T} a_2), ~\chi(a_1 \circ_{T}  a_2) \,S(\nu^{-1}_{T(a_1 \circ_{T} a_2)} (z)) \big), \text{where} \notag
\end{eqnarray}
\begin{eqnarray*}
	z&=&\; \mu_{\beta{_{T(a_1)}(a_2)}}(k_1)\, \rho(a_2, T(a_1)) \,\nu_{T(a_1)} \big(f(\chi(a_1) S(\nu^{-1}_{T(a_1)}(k_1)), a_2) k_2 \big)  \, \tau_1(a_1,\beta{_{T(a_1)}(a_2)}).
\end{eqnarray*}

On the other hand, we have
\begin{eqnarray}\label{RRBcon2}
&& R (a_1,k_1)R (a_2,k_2)\\
&= & \big(T(a_1),~ \chi(a_1) S(\nu^{-1}_{T(a_1)}(k_1)) \big)\, \big(T(a_2), ~\chi(a_2) S(\nu^{-1}_{T(a_2)}(k_2))\big)\notag\\
&=& \big(T(a_1 \circ_{T} a_2),~ \sigma_{T(a_2)}\big( \chi(a_1) S(\nu^{-1}_{T(a_1)}(k_1))\big) \, \chi(a_2) \,S(\nu^{-1}_{T(a_2)}(k_2)) \,\tau_2(T(a_1), T(a_2) ) \big) \notag.
\end{eqnarray} 

Comparing \eqref{RRBcon} and \eqref{RRBcon2} and using the fact that $T(a_1 \circ_{T} a_2)=  T(a_1)T(a_2)$, we obtain
\begin{eqnarray}\label{Rmain}
	&& \chi(a_1 \circ_{T} a_2) \,S\big( \nu^{-1}_{T(a_1 \circ_{T} a_2)} \big(\mu_{\beta{_{T(a_1)}}(a_2)}(k_1) \, \rho(a_2, T(a_1))\, \nu_{T(a_1)}(f(\chi(a_1) S(\nu^{-1}_{T(a_1)}(k_1)), a_2)k_2) \\
	&&  \tau_1(a_1,\beta{_{T(a_1)}}(a_2))\big) \big)\notag\\
	&=& \sigma_{T(a_2)}\big( \chi(a_1) S(\nu^{-1}_{T(a_1)}(k_1))\big)\, \chi(a_2)\, \tau_2 \,(T(a_1), T(a_2) )S(\nu^{-1}_{T(a_2)}(k_2)) . \notag
\end{eqnarray}

Cancelling off $S(\nu^{-1}_{T(a_2)}(k_2))$ and putting $k_1 = 1$ in \eqref{Rmain}, we get 
\begin{small}
\begin{eqnarray}\label{R1}
&& \chi(a_1 \circ_{T} a_2) \,S \big(\nu^{-1}_{T(a_1 \circ_{T} a_2)}\big(\rho(a_2, T(a_1)) \,\tau_1(a_1,\beta{_{T(a_1)}}(a_2))\big) \,\nu^{-1}_{T(a_2)}(f(\chi(a_1), a_2))\big) \\
&= &  \sigma_{T(a_2)}( \chi(a_1)) \, \chi(a_2) \,\tau_2(T(a_1), T(a_2) ). \notag
\end{eqnarray}
\end{small}
Using \eqref{R1} in \eqref{Rmain} and the facts that $f$ is linear in the first coordinate and  $T(a_1 \circ_{T} a_2)=  T(a_1)T(a_2)$, we get
\begin{equation}
S\big(\nu^{-1}_{T(a_1 \circ_{T} a_2)}(\mu_{\beta{_{T(a_1)}}(a_2)}(k_1))\nu^{-1}_{T(a_2)}(f(S(\nu^{-1}_{T(a_1)}(k_1)), a_2))\big)= \sigma_{T(a_2)}(S(\nu^{-1}_{T(a_1)}(k_1))).
\end{equation}
Replacing $k_1$ by $\nu_{T(a_1)}(k_1)$ and using \eqref{mcon}, we get
\begin{eqnarray}\label{finalactrel}
	S\big(\nu^{-1}_{T(a_2)}(\mu_{a_2}(k_1)) \,\nu^{-1}_{T( a_2)}(f(S(k_1), a_2))\big) &=& \sigma_{T(a_2)}(S(k_1)).
\end{eqnarray}

The quadruple $(\nu, \mu, \sigma, f)$ is called the \emph{associated action} of the extension $\mathcal{E}$ of relative Rota-Baxter groups. The preceding observations lead to the following result.

\begin{prop}\label{Rallcondn}
Let $\phi: B \times_{\tau_2} L \longrightarrow \Aut(A \times_{\tau_1} K )$ defined in \eqref{phidefnprop} be a homomorphism. Then the map  $R:  A \times_{\tau_1} K \rightarrow B \times_{\tau_2} L$ given by
\begin{eqnarray*}
R(a,k) &= & \big(T(a), ~\chi(a) \,S(\nu^{-1}_{T(a)}(k))\big),
\end{eqnarray*} 
for $(a,k) \in A \times_{\tau_1} K$ and $(b,l) \in B \times_{\tau_2} L$, is a relative Rota-Baxter operator if and only if the conditions 
\begin{eqnarray*}
& & \chi(a_1 \circ_{T} a_2) \,S \big(\nu^{-1}_{T(a_1 \circ_{T} a_2)}\big(\rho(a_2, T(a_1)) \,\tau_1(a_1,\beta{_{T(a_1)}}(a_2))\big) \,\nu^{-1}_{T(a_2)}(f(\chi(a_1), a_2))\big) \\
& =&   \sigma_{T(a_2)}( \chi(a_1)) \, \chi(a_2) \,\tau_2(T(a_1), T(a_2) )\\
\textrm{and} &&\\
&& S\big(\nu^{-1}_{T(a_2)}(\mu_{a_2}(k)) \,\nu^{-1}_{T( a_2)}(f(S(k), a_2))\big) = \sigma_{T(a_2)}(S(k)),	
\end{eqnarray*}		
hold for all $ a_1, a_2 \in A$ and $ k \in K$.
\end{prop}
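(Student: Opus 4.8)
The plan is to write out the defining identity of a relative Rota-Baxter operator, namely $R(h_1)\,R(h_2)=R\big(h_1\,\phi_{R(h_1)}(h_2)\big)$, for arbitrary elements $h_1=(a_1,k_1)$ and $h_2=(a_2,k_2)$ of $A\times_{\tau_1}K$, and to compare the two sides inside $B\times_{\tau_2}L$. Both sides have already been expanded in the computations preceding the statement: the right-hand side $R\big((a_1,k_1)\,\phi_{R(a_1,k_1)}(a_2,k_2)\big)$ is computed in \eqref{RRBcon}, and the left-hand side $R(a_1,k_1)\,R(a_2,k_2)$ in \eqref{RRBcon2}. Since each side is an ordered pair, the equality splits into a $B$-coordinate and an $L$-coordinate.

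First I would observe that the $B$-coordinates agree automatically: both equal $T(a_1)\,T(a_2)=T(a_1\circ_T a_2)$, because $T$ is a group homomorphism from the descendent group $A^{(\circ_T)}$ by Proposition \ref{R homo H to G}. Hence $R$ is a relative Rota-Baxter operator if and only if the $L$-coordinates coincide for all $a_1,a_2,k_1,k_2$, which is precisely the identity \eqref{Rmain}. The whole problem is thereby reduced to showing that \eqref{Rmain} is equivalent to the conjunction of the two stated conditions.

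For the forward direction, I would specialise \eqref{Rmain} to $k_1=1$ and cancel the common factor $S(\nu^{-1}_{T(a_2)}(k_2))$; this strips away the $k_1$-dependence and leaves exactly \eqref{R1}, the first condition. Feeding \eqref{R1} back into the general identity \eqref{Rmain}, cancelling the now-redundant $\chi$- and cocycle-terms, and using that $f$ is additive in its first argument (Lemma \ref{properties of f}) together with the commutation relation \eqref{mcon}, namely $\nu_b\mu_{a_2}(k)=\mu_{\beta_b(a_2)}\nu_b(k)$, collapses the remainder to \eqref{finalactrel}; replacing $k_1$ by $\nu_{T(a_1)}(k_1)$ puts it in the stated form, giving the second condition. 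For the converse I would run these reductions backwards: since $K$ and $L$ are abelian and $S\colon K^{(\circ_S)}\to L$ is a group homomorphism by Proposition \ref{R homo H to G}, with $\circ_S$ equal to the group operation of $K$ because the extension is abelian, the $L$-component of \eqref{Rmain} factors as a product of a term depending only on $\chi$ and the cocycles $\tau_1,\tau_2$ and a term depending only on $k_1$; the first condition governs the former and the second condition the former's $k_1$-counterpart, so together they reassemble \eqref{Rmain}.

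The hard part will be the bookkeeping in the $L$-coordinate: one has to track the interaction of $S$, $\nu^{-1}$, $\sigma$, $\chi$ and the two cocycles $\tau_1,\tau_2$ simultaneously, and the delicate point is the clean separation of the single identity \eqref{Rmain} into its two constituent conditions. The levers that make this separation go through are abelianness of $K$ and $L$, the homomorphism property of $S$ on the descendent group, additivity of $f$ in its first variable, and the relation \eqref{mcon} between $\mu$ and $\nu$ furnished by the hypothesis that $\phi$ is a homomorphism; once these are invoked, everything else is routine substitution.
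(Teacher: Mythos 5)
Your proposal is correct and follows essentially the same route as the paper: the paper's proof is precisely the computation comparing \eqref{RRBcon} and \eqref{RRBcon2} to obtain \eqref{Rmain}, then specialising $k_1=1$ and cancelling $S(\nu^{-1}_{T(a_2)}(k_2))$ to get \eqref{R1}, and substituting back with linearity of $f$ and \eqref{mcon} to get \eqref{finalactrel}. Your explicit treatment of the converse (refactoring \eqref{Rmain} using abelianness of $K$ and $L$, the homomorphism property of $S$, and \eqref{mcon}) is exactly the reversibility the paper leaves implicit.
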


\begin{remark}\label{bijext}
It follows from Proposition \ref{Rallcondn} that if $T$ and $S$ are bijections, then $R$ is also a bijection. Consequently, every extension of bijective relative Rota-Baxter groups is itself bijective.
\end{remark}

A special case of the preceding discussion gives the direct product of relative Rota-Baxter groups. 

\begin{example}
Let $\mathcal{K}=(K,L,\alpha,S) $ and $\mathcal{A}=(A,B,\beta,T)$ be two relative Rota-Baxter groups. Then the direct product of $\mathcal{K}$ and $\mathcal{A}$ is defined as $$\mathcal{A}\times\mathcal{K}=(A\times K, \, B\times L, \, \beta\times \alpha, \, T\times S),$$ where $A\times K$ and $B\times L$ are direct product of groups,  and the action and the Rota-Baxter operator are given by
	\begin{eqnarray*}
		(\beta\times\alpha)_{(b,l)}(a,k)&=&(\beta_b(a),\,\alpha_l(k)),\\
		(T\times S)(a,k)&=&(T(a),\,S(k)),
	\end{eqnarray*}
for $a \in A$, $b \in B$, $k \in K$ and $l \in L$.
\end{example}
\medskip

\subsection{Cohomology of relative Rota-Baxter groups}\label{subsec cohomology RRB groups}
Based on the relationship between $\mu, \sigma, \nu, f$ and their properties derived in the preceding subsection, we now define a module over a relative Rota-Baxter group.

\begin{defn}\label{moddefn}
	A module over a relative Rota-Baxter group $(A, B,\beta, T)$ is a trivial relative Rota-Baxter group $(K, L, \alpha, S)$ such that there exists a quadruple $(\nu, \mu, \sigma, f)$ (called action) of maps satisfying the following conditions:
	\begin{enumerate}
		\item The group $K$ is a left $B$-module and a right $A$-module with respect to the actions $\nu:B \to \Aut(K) $ and $\mu: A \to \Aut(K)$, respectively.
		\item The group $L$ is a right $B$-module with respect to the action $\sigma: B \to \Aut(L)$.
		\item The  map  $f: L \times A \to K$ has the property that $f(-,a): L \rightarrow K$ is a homomorphism for all $a \in A$ and $f(l,-): A \rightarrow K$ is a derivation with respect to the action $\mu$ for all $l \in L$.
		\item $S\big(\nu^{-1}_{T(a)}(\mu_{a}(k)) \,\nu^{-1}_{T( a)}(f(S(k), a))\big)=\;\sigma_{T(a)}(S(k))$ for all $ a \in A$ and $k \in K$.
		\item $\nu_b(\mu_{a}(k))= \;   \mu_{ \beta{_{b}(a)}}( \nu_{b} (k))$ for all $a \in A$, $b \in B$ and $k \in K$. 
	\end{enumerate}
\end{defn}

We say that the module $(K, L, \alpha, S)$ is trivial if all the maps $\mu, \nu, \sigma, f$ are trivial. 

\begin{prop}\label{module structure from extension}
	Suppose that $${\bf 1} \longrightarrow (K,L, \alpha,S ) \stackrel{(i_1, i_2)}{\longrightarrow}  (H,G, \phi, R) \stackrel{(\pi_1, \pi_2)}{\longrightarrow} (A,B, \beta, T) \longrightarrow {\bf 1}$$
	is an abelian extension of relative Rota-Baxter groups. Then  the following holds:
	\begin{enumerate}
		\item The group $K$ is a left $B$-module via an action $\nu$  and a right $A$-module via an action $\mu$, where $\nu$ and $\mu$ are defined in \eqref{nuact} and  \eqref{muact}, respectively.
		\item The group $L$ is a right $B$-module via an action $\sigma$, where $\sigma$ is defined in \eqref{sigmaact}.
		\item  The trivial relative Rota-Baxter group $(K, L, \alpha, S)$ is an $(A,B, \beta, T)$-module via the action $(\nu, \mu, \sigma, f)$.
	\end{enumerate}
\end{prop}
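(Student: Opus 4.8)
The plan is to show that Proposition \ref{module structure from extension} follows almost entirely from the computations already carried out in Section \ref{subsec extensions of RRB groups}, by checking that the quadruple $(\nu, \mu, \sigma, f)$ arising from an abelian extension satisfies each of the five axioms in Definition \ref{moddefn}. First I would invoke Proposition \ref{construction of actions}, which already establishes that $\nu: B \to \Aut(K)$ is a homomorphism (so $K$ is a left $B$-module), that $\mu: A \to \Aut(K)$ is an anti-homomorphism (so $K$ is a right $A$-module), and that $\sigma: B \to \Aut(L)$ is an anti-homomorphism (so $L$ is a right $B$-module); this disposes of axioms (1) and (2) as well as the independence-of-section claims. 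The homomorphism property required for $f(-,a)$ and the derivation property required for $f(l,-)$ are exactly Lemma \ref{properties of f}(2) and \ref{properties of f}(3), with Lemma \ref{properties of f}(1) giving well-definedness, so axiom (3) is immediate.

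Next I would verify axioms (4) and (5). The key point is that these are precisely the two identities already derived in the course of forcing $\phi$ and $R$ to be a homomorphism and a relative Rota-Baxter operator, respectively. Axiom (5), namely $\nu_b(\mu_a(k)) = \mu_{\beta_b(a)}(\nu_b(k))$, is exactly equation \eqref{mcon}, which was obtained by setting $a_1 = 1$ in the homomorphism condition \eqref{mainhom} for $\phi$. Axiom (4), the compatibility relation involving $S$, $\nu^{-1}$, $\mu$, $f$ and $\sigma$, is exactly equation \eqref{finalactrel}, which came out of comparing \eqref{RRBcon} and \eqref{RRBcon2} (the two sides of the relative Rota-Baxter identity) and then simplifying via \eqref{R1} and replacing $k_1$ by $\nu_{T(a_1)}(k_1)$. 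Since in an extension the maps $\phi$ and $R$ \emph{are} a genuine homomorphism and a genuine relative Rota-Baxter operator by hypothesis, all of these equations hold, and hence so do axioms (4) and (5).

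Having checked all five axioms, assertions (1), (2) and (3) of the Proposition follow: (1) and (2) are the module structures on $K$ and $L$ supplied by Proposition \ref{construction of actions}, and (3) is the statement that the trivial relative Rota-Baxter group $(K, L, \alpha, S)$ together with the action $(\nu, \mu, \sigma, f)$ satisfies Definition \ref{moddefn}, which is precisely what the axiom verification establishes. I would be careful to note at the outset that $(K, L, \alpha, S)$ is trivial as a relative Rota-Baxter group (it is the kernel of an abelian extension), so that it is eligible to serve as a module in the sense of Definition \ref{moddefn}.

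I do not expect a genuine obstacle here, since the substantive work has all been done in the derivations preceding the statement; the proposition is essentially a bookkeeping result that packages those derivations into the module axioms. The only mild subtlety to be attentive to is matching the indexing conventions between the general identities (which are stated for $a_1, a_2, b_1, b_2$) and the single-variable axioms in Definition \ref{moddefn}, and confirming that the specializations ($a_1 = 1$ to get \eqref{mcon}, and the $k_1 \mapsto \nu_{T(a_1)}(k_1)$ substitution to get \eqref{finalactrel}) are exactly the forms appearing as axioms (5) and (4). This is purely a matter of careful reading rather than any new computation.
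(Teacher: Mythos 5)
Your proposal is correct and follows essentially the same route as the paper: assertions (1) and (2) are delegated to Proposition \ref{construction of actions}, the properties of $f$ come from \eqref{fdefn}, \eqref{iso4} and \eqref{fcon} (which is exactly the content of Lemma \ref{properties of f}), and the remaining two module axioms are read off from \eqref{mcon} and \eqref{finalactrel}, which hold because $\phi$ and $R$ are genuinely a homomorphism and a relative Rota-Baxter operator in any extension. The only cosmetic difference is that you cite Lemma \ref{properties of f} where the paper cites the underlying equations directly.
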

\begin{proof}
	Assertions (1) and (2) follow from Proposition \ref{construction of actions}. The existence of the map $f: L \times A \to K$ is given by \eqref{fdefn}. That $f$ is linear in the first variable and is a derivation in the second variable follow from \eqref{iso4} and \eqref{fcon}, respectively. The two desired relations among these maps follow from \eqref{mcon} and  \eqref{finalactrel}, which proves assertion (3).
\end{proof}

Using the relationships between the maps $\tau_1, \tau_2, f,\rho,\chi$ obtained in the preceding subsection,  we now define low dimensional cohomology groups of a  relative Rota-Baxter group. Let $\mathcal{A}=(A, B, \beta, T)$ and $\mathcal{K}=(K, L, \alpha, S)$ be  relative Rota-Baxter groups such that   $\mathcal{K}$ is trivial and is an $\mathcal{A}$-module via the action $(\nu, \mu, \sigma, f)$.
\medskip 

Let us set 
\begin{eqnarray*}
	C^0_{RRB} & = & K \times_{(\nu, \mu, \sigma, f) } L,\\
	C^{1}_{RRB} &=& C^1(A, K) \oplus C^1(B,L),\\
	C^{2}_{RRB} &=& C^2(A, K) \oplus C^2(B,L) \oplus C(A \times B, K) \oplus C(A,L),\\
	C^{3}_{RRB} &=& C^3(A, K) \oplus C^3(B,L) \oplus C(A \times B^2, K) \oplus C(A^2 \times B, K) \oplus C^2(A,L),
\end{eqnarray*}
where 
\begin{eqnarray*}
	K \times_{(\nu, \mu, \sigma, f) } L &=& \big\{ (k,l) \in K \times L ~\mid ~ f(\sigma_b(l), a)=f(l,a),~ \nu_b(k) = k,\\
	&& \sigma_{T(a)}(l)=l ~\textrm{and}~ S(\mu_a(k))=S(k) ~\textrm{for all}~ a \in A~\textrm{and}~ b \in B \big\}
\end{eqnarray*} 
and  $C(A^m \times B^n, K)$ is the group of maps which vanish on degenerate tuples, i.e, on tuples in which either $a_i=1$ for some $1\leq i\leq m$ or  $b_j=1$ for some $1\leq j\leq n$.
\par

Define $\partial_{RRB}^0: C^0_{RRB} \rightarrow C^{1}_{RRB}$ by 
$$\partial_{RRB}^0(k,l)=(\kappa_k, \,\omega_l),$$
where $\kappa_k: A \rightarrow K$ and $\omega_l: B \rightarrow L$ are defined by 
\begin{eqnarray}
	\kappa_k(a) &=&\mu_a(k)k^{-1},\\
	\omega_l(b)&=& \sigma_b(l)l^{-1},
\end{eqnarray} 
for $k \in K$, $l \in L$, $a \in A$ and $b \in B$. Define $\partial_{RRB}^1: C^1_{RRB} \rightarrow C^{2}_{RRB}$ by 
$$\partial_{RRB}^1(\theta_1, \theta_2)=(\partial_{\mu}^1(\theta_1), \partial_{\sigma}^1(\theta_2), \lambda_1, \lambda_2  ),$$ 
where  $\partial_{\mu}^1,  \partial_{\sigma}^1$ are as in \eqref{group coboundary mu} and  $\lambda_1, \lambda_2$ are given by
\begin{eqnarray*}
	\lambda_1(a,b) &=& \nu_b \big(f(\theta_2(b), a)\theta_1(a) \big)\, \big(\theta_1(\beta_b(a))\big)^{-1},\\
	\lambda_2(a) &=& S \big(\nu^{-1}_{T(a)}(\theta_1(a)) \big) \, \big(\theta_2(T(a)) \big)^{-1},
\end{eqnarray*}
for $\theta_1 \in C^1(A, K)$, $\theta_2 \in C^1(B,L)$, $a \in A$ and $b \in B$. Finally, define $\partial_{RRB}^2: C^2_{RRB} \rightarrow C^{3}_{RRB}$  by  
$$\partial^2_{RRB}(\tau_1, \tau_2, \rho, \chi)=(\partial_{\mu}^2(\tau_1), \partial_{\sigma}^2(\tau_2), \gamma_1, \gamma_2, \gamma_3),$$
where  $\partial_{\mu}^2,  \partial_{\sigma}^2$ are as in \eqref{group coboundary mu} and
\begin{eqnarray*}
	\gamma_1(a,b_1, b_2) &=& \rho(a, b_1 b_2) \, \nu_{b_1 b_2}( f(\tau_2(b_1, b_2), a)) \, (\rho(\beta_{b_2}(a), b_1))^{-1}  \,(\nu_{b_1}(\rho(a,b_2)))^{-1},\\
	\gamma_2(a_1, a_2,b) &=& \rho(a_1 a_2, b) \, \nu_{b}(\tau_1(a_1, a_2))  \,(\mu_{ \beta_{b}(a_2)}(\rho(a_1,b)))^{-1}  \,(\rho(a_2, b))^{-1}  \,(\tau_1(\beta_{b}(a_1), \beta_{b}(a_2)))^{-1},\\
	\gamma_3(a_1, a_2) &= &  S \big(\nu^{-1}_{T(a_1 \circ_T a_2)}\big(\rho(a_2, T(a_1))  \, \tau_1(a_1,\beta_{T(a_1)}(a_2))  \,\nu_{T(a_1)}(f(\chi(a_1), a_2))  \big)\big)\\
	&& (\tau_2(T(a_1), T(a_2) ))^{-1}  \,(\delta^1_{\sigma}(\chi)(a_1, a_2))^{-1}
\end{eqnarray*}
for $\tau_1  \in C^2(A, K)$,  $\tau_2 \in C^2(B,L)$, $\rho \in C(A \times B, K)$, $\chi \in C(A,L)$, $a, a_1, a_2 \in A$ and $b, b_1, b_2 \in B$.

\begin{lemma}\label{rrb coboundary condition}
	$\IM(\partial^0_{RRB}) \subseteq  \Ker(\partial^1_{RRB})$ and	$\IM(\partial^1_{RRB}) \subseteq  \Ker(\partial^2_{RRB})$.
\end{lemma}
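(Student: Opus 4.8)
The statement is precisely the assertion that $\partial^1_{RRB}\,\partial^0_{RRB}=0$ and $\partial^2_{RRB}\,\partial^1_{RRB}=0$, i.e.\ that $\{C^n_{RRB},\partial^n_{RRB}\}$ is a cochain complex in this range. The plan is to exploit the fact that each $\partial^n_{RRB}$ splits as a direct sum of the two classical group coboundaries $\partial^n_\mu$ and $\partial^n_\sigma$ together with the extra Rota--Baxter components ($\lambda_1,\lambda_2$ in degree one and $\gamma_1,\gamma_2,\gamma_3$ in degree two). When two such maps are composed, the first two slots produce $\partial^{n+1}_\mu\partial^n_\mu$ and $\partial^{n+1}_\sigma\partial^n_\sigma$, which vanish automatically because $\{C^n(A,K),\partial^n_\mu\}$ and $\{C^n(B,L),\partial^n_\sigma\}$ are known to be cochain complexes. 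Thus the entire problem reduces to showing that the mixed components vanish when evaluated on the image of the previous map, and throughout I will use that $(K,L,\alpha,S)$ is trivial, so that $S\colon K\to L$ is a group homomorphism and $K,L$ are abelian.

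For $\IM(\partial^0_{RRB})\subseteq\Ker(\partial^1_{RRB})$, I take $(k,l)\in C^0_{RRB}$, so that $\partial^0_{RRB}(k,l)=(\kappa_k,\omega_l)$ with $\kappa_k(a)=\mu_a(k)k^{-1}$ and $\omega_l(b)=\sigma_b(l)l^{-1}$, and I must show that the components $\lambda_1$ and $\lambda_2$ computed from $(\kappa_k,\omega_l)$ vanish. For $\lambda_1$, the membership relation $f(\sigma_b(l),a)=f(l,a)$ together with the fact that $f(-,a)$ is a homomorphism kills the $f$-term; what remains is $\nu_b(\kappa_k(a))\,\kappa_k(\beta_b(a))^{-1}$, and $\nu_b(k)=k$ combined with axiom~(5) of Definition~\ref{moddefn} yields $\nu_b(\kappa_k(a))=\kappa_k(\beta_b(a))$, so $\lambda_1\equiv 1$. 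For $\lambda_2$, the relation $\sigma_{T(a)}(l)=l$ makes $\omega_l(T(a))=1$, while the same identity gives $\nu^{-1}_{T(a)}(\kappa_k(a))=\kappa_k(\beta^{-1}_{T(a)}(a))$, reducing $\lambda_2(a)$ to $S(\kappa_k(a'))$ for $a'=\beta^{-1}_{T(a)}(a)$; this is trivial because the membership condition $S(\mu_{a'}(k))=S(k)$ holds for every $a'\in A$ and $S$ is a homomorphism. Here the four defining conditions of $C^0_{RRB}$ are used exactly once each, which is the reason they were imposed.

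For $\IM(\partial^1_{RRB})\subseteq\Ker(\partial^2_{RRB})$, I take $(\theta_1,\theta_2)\in C^1_{RRB}$ and substitute $\tau_1=\partial^1_\mu(\theta_1)$, $\tau_2=\partial^1_\sigma(\theta_2)$, $\rho=\lambda_1$ and $\chi=\lambda_2$ into the five output components. The first two, $\partial^2_\mu(\partial^1_\mu\theta_1)$ and $\partial^2_\sigma(\partial^1_\sigma\theta_2)$, vanish by the classical identity recalled above. For $\gamma_1$ and $\gamma_2$ I would expand the coboundary formulas for $\tau_1,\tau_2$ and the explicit expressions for $\lambda_1,\lambda_2$; the resulting terms cancel in pairs after applying that $\nu$ is a homomorphism and $\mu$ an anti-homomorphism, the homomorphism and $\mu$-derivation properties of $f$ (Lemma~\ref{properties of f}), axiom~(5) of Definition~\ref{moddefn}, and the commutativity of $K$ and $L$.

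The main obstacle is the third component $\gamma_3$, which is where the Rota--Baxter operator genuinely enters: it couples $\tau_1$, $\tau_2$, $\rho=\lambda_1$ and $\chi=\lambda_2$ through the descendent product $\circ_T$, the twist $\nu^{-1}_{T(-)}$, the term $f(\chi,\cdot)$, and the twisted coboundary $\delta^1_\sigma(\chi)$. Verifying $\gamma_3\equiv 1$ requires substituting the explicit formulas for $\lambda_1$ and $\lambda_2$, using $T(a_1\circ_T a_2)=T(a_1)T(a_2)$ and $a_1\circ_T a_2=a_1\beta_{T(a_1)}(a_2)$, and then invoking axiom~(4) of Definition~\ref{moddefn} (the compatibility of $S$, $\nu^{-1}_T$, $\mu$ and $f$) precisely to convert the block $S\big(\nu^{-1}_{T(a_1\circ_T a_2)}(\cdots)\big)$ into a product of $\sigma$-twists matching $\delta^1_\sigma(\chi)(a_1,a_2)$ and $\tau_2(T(a_1),T(a_2))$. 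I expect the bookkeeping here to be the lengthiest part: one must track the order of the twists $\nu^{-1}_{T(a_1)}$, $\nu^{-1}_{T(a_2)}$ and $\nu^{-1}_{T(a_1\circ_T a_2)}$, apply the homomorphism property of $f$ in its first slot and its $\mu$-derivation property in its second slot wherever products of elements of $A$ or $L$ appear, and use the abelianness of $K$ and $L$ to reorder and cancel, after which $\gamma_3$ collapses to the identity.
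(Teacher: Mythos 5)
Your overall strategy is the same as the paper's and could hardly be otherwise: the first two slots of each composition vanish because $\partial_\mu$ and $\partial_\sigma$ are classical group coboundaries, and the mixed components must be checked by hand. Your treatment of $\IM(\partial^0_{RRB})\subseteq\Ker(\partial^1_{RRB})$ is correct and complete, in fact more detailed than the paper's (which only says the claim follows from the definition of $K\times_{(\nu,\mu,\sigma,f)}L$ and the compatibility conditions); your observation that the four membership conditions are used exactly once each is accurate. Your sketches for $\gamma_2$ and $\gamma_3$ also match what the paper actually does: $\gamma_2$ closes using the derivation property of $f$ and axiom (5), and the surviving block of $\gamma_3$ is exactly axiom (4) of Definition \ref{moddefn} evaluated at $k=\nu^{-1}_{T(a_1)}(\theta_1(a_1))$, which is the paper's concluding step.

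There is, however, a genuine gap at $\gamma_1$: the tools you list (homomorphism/anti-homomorphism properties of $\nu,\mu,\sigma,\beta$, the two properties of $f$ from Lemma \ref{properties of f}, axiom (5), and commutativity of $K$ and $L$) do \emph{not} suffice to make the terms of $\gamma_1$ cancel in pairs. Expanding $\gamma_1(a,b_1,b_2)$ with $\rho=\lambda_1$ and $\tau_2=\partial^1_\sigma(\theta_2)$ and performing every cancellation those tools allow, one is left with
\[
\nu_{b_1}\Big(\nu_{b_2}\big(f(\sigma_{b_2}(\theta_2(b_1)),a)\big)\,\big(f(\theta_2(b_1),\beta_{b_2}(a))\big)^{-1}\Big),
\]
which vanishes precisely when the mixed compatibility $\nu_b\big(f(\sigma_b(l),a)\big)=f(l,\beta_b(a))$ holds. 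This identity couples $f$, $\sigma$, $\nu$ and $\beta$ and is logically independent of everything you invoke: for instance, take $\nu$, $\mu$, $\beta$ trivial, $S$ the zero homomorphism, $K=L=\mathbb{Z}$, $f$ bi-additive and $\sigma$ a sign action; then all of your hypotheses (indeed all five axioms of Definition \ref{moddefn}) hold, yet the displayed term is nonzero for suitable $\theta_2$, so the claimed cancellation genuinely fails. The paper's proof invokes exactly this identity at this point (it appears as the fifth condition in Proposition \ref{phiallcondn} and is derived for extension-induced actions immediately after \eqref{iso4}). To repair your argument you must add this compatibility to your toolkit — and, since it is not literally among the axioms of Definition \ref{moddefn}, state explicitly that it is assumed as part of the module action $(\nu,\mu,\sigma,f)$; this is a point on which the paper itself is silent.
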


\begin{proof}
It is enough to prove that $\partial^1_{RRB}\;  \partial^0_{RRB}$ and $\partial^2_{RRB}\;  \partial^1_{RRB}$ are trivial homomorphisms. We have 
	\begin{eqnarray*}
		\partial^1_{RRB}\;  \partial^0_{RRB}(k,l) &=& (\partial^1_{\mu}(\kappa_k), \partial^1_{\sigma}(\omega_l), \lambda_1, \lambda_2).
	\end{eqnarray*}
	Since $\kappa_k$ and $\omega_l$ are group 1-coboundaries,  we have $\partial^1_{\mu}(\kappa_k)=1$ and $\partial^1_{\sigma}(\omega_l)=1$. Further, we have
	\begin{eqnarray*}
		\lambda_1(a,b) &=& \nu_b \big(f(\sigma_b(l)l^{-1}, a)\mu_a(k)k^{-1} \big)  \,(\mu_{\beta_b(a)}(k))^{-1} \,k.
	\end{eqnarray*}
	Using the definition of $K \times_{(\nu, \mu, \sigma, f)} L$ and the compatibility conditions in Definition \ref{moddefn}, we have $\lambda_1(a, b) = 1$. Similarly, we obtain $\lambda_2(a, b) = 1$ for all $a \in A$ and $b \in B.$ 
		\par 
	
	Now we prove that $\partial^2_{RRB}\;  \partial^1_{RRB}$ is trivial. If $(\theta_1, \theta_2) \in C^1_{RRB}$, then
	\begin{eqnarray}\label{com1}
		\partial^2_{RRB}\;  \partial^1_{RRB}(\theta_1, \theta_2) &=& \partial^2_{RRB}(\partial_{\mu}^1(\theta_1), \partial_{\sigma}^1(\theta_2), \lambda_1, \lambda_2 ).
	\end{eqnarray}
	We show that each term on the right hand side of \eqref{com1} is a trivial map. The first two terms are trivial maps since $\partial_{\mu}^2\partial_{\mu}^1$ and $  \partial_{\sigma}^2\partial_{\sigma}^1$ are trivial being the coboundary maps defining the usual group cohomology. We expand the remaining three terms using the definitions of $\partial^1_{RRB}$ and $\partial^2_{RRB}$. It follows from \eqref{group coboundary mu} and \eqref{group coboundary twisted sigma} that
	\begin{eqnarray*}
		\partial^1_{\mu}(\theta_1)(a_1, a_2) &=& \theta_1(a_2)\, (\theta_1(a_1 a_2))^{-1} \,\mu_{a_2}(\theta_1(a_1)),\\
		\partial^1_{\sigma}(\theta_2)(b_1, b_2) &=& \theta_2(b_2) \,(\theta_2(b_1 b_2))^{-1} \,\sigma_{b_2}(\theta_2(b_1)),\\
		\delta^1_{\sigma}(\chi)(a_1, a_2) &=& \chi(a_2) \, (\chi(a_1 \circ_T a_2))^{-1} \,\sigma_{T(a_2)}( \chi(a_1)).
	\end{eqnarray*}
	
	The third term of \eqref{com1} is given by
	\begin{eqnarray}\label{t1}
		&& \gamma_1(a, b_1, b_2) \notag\\
		&=&	  \nu_{b_1 b_2}(f(\theta_2(b_1 b_2), a)\theta_1(a))(\theta_1(\beta_{b_1 b_2}(a)))^{-1} \nu_{b_1 b_2}(f( \theta_2(b_2)(\theta_2(b_1 b_2))^{-1}\sigma_{b_2}(\theta_2(b_1)), a))\notag \\ 
		&&(\nu_{b_1}(f(\theta_2(b_1), \beta_{b_2}(a))\theta_1(\beta_{b_2}(a)))(\theta_1(\beta_{b_1}(\beta_{b_2}(a))))^{-1})^{-1}(\nu_{b_1}\big( \nu_{b_2}(f(\theta_2(b_2), a)\theta_1(a))(\theta_1(\beta_{b_2}(a)))^{-1})\big)^{-1}. \notag
	\end{eqnarray}
Using that $f(\theta_2(b_1), \beta_{b_2}(a))= \nu_{b_2}(f(\sigma_{b_2}(\theta_2(b_2)),a))$ and $f$ is linear in the first coordinate, we have $\gamma_1(a, b_1, b_2)=1$. The fourth term of \eqref{com1} is given by
	\begin{eqnarray}
		\gamma_2(a_1, a_2,b) &= & \nu_b(f(\theta_2(b), a_1a_2)\theta_1(a_1 a_2))(\theta_1(\beta_b(a_1 a_2)))^{-1} \nu_{b}(\theta_1(a_2)(\theta_1(a_1 a_2))^{-1}\mu_{a_2}(\theta_1(a_1))) \notag\\
		&&  (\mu_{ \beta_{b}(a_2)}(\nu_b(f(\theta_2(b), a_1)\theta_1(a_1))(\theta_1(\beta_b(a_1)))^{-1}))^{-1} (\nu_b(f(\theta_2(b), a_2)\theta_1(a_2)) (\theta_1(\beta_b(a_2)))^{-1})^{-1}\notag\\
		&&(\theta_1(\beta_{b}(a_2)))^{-1}\theta_1(\beta_{b}(a_1)\beta_{b}(a_2))(\mu_{\beta_{b}(a_2)}(\theta_1(\beta_{b}(a_1))))^{-1}.\notag
	\end{eqnarray}
	Using that $f(\theta_2(b), a_1 a_2) =  \mu_{ a_2}(f(\theta_2(b),a_1))f(\theta_2(b), a_2)$ and $\nu_b(\mu_{ a_2}(k))=\mu_{ \beta_{b}(a_2)}(\nu_b(k))$ for all $k \in K$, we have $\gamma_2(a_1, a_2,b)=1$. The fifth term is given by 
	\begin{eqnarray*}
		\gamma_3(a_1, a_2) &= & \chi(a_1 \circ_T a_2) ~S(\nu^{-1}_{T(a_1 \circ a_2)}\big(\rho(a_2, T(a_1))\tau_1(a_1,\beta_{T(a_1)}(a_2)) \nu_{T(a_1)}(f(\chi(a_1), a_2)))\big)\notag\\
		&& (\sigma_{T(a_2)}( \chi(a_1)))^{-1} ~(\chi(a_2))^{-1} ~(\tau_2(T(a_1), T(a_2) ))^{-1} \notag\\
		&= &  S(\nu^{-1}_{T(a_1 \circ_T a_2)}(\theta_1(a_1 \circ_T a_2))) (\theta_2(T(a_1 \circ_T a_2)))^{-1} S(\nu^{-1}_{T(a_1 \circ_T a_2)}\big( \nu_{T(a_1)}(f(\theta_2(T(a_1)), a_2)\notag\\
		&& \theta_1(a_2))(\theta_1(\beta_{T(a_1)}(a_2)))^{-1} \theta_1(\beta_{T(a_1)}(a_2))(\theta_1(a_1 \beta_{T(a_1)}(a_2)))^{-1}\mu_{\beta_{T(a_1)}(a_2)}(\theta_1(a_1))\notag\\
		&&  \nu_{T(a_1)}(f(S(\nu^{-1}_{T(a_1)}(\theta_1(a_1)))(\theta_2(T(a_1)))^{-1}, a_2 ))) \big)  (\sigma_{T(a_2)}(S(\nu^{-1}_{T(a_1)}(\theta_1(a_1)))(\theta_2(T(a_1)))^{-1}))^{-1}\notag\\   
		&&	\big(S(\nu^{-1}_{T(a_2)}(\theta_1(a_2)))(\theta_2(T(a_2)))^{-1}\big)^{-1} \big( \theta_2(T(a_2))(\theta_2(T(a_1) T(a_2)))^{-1}\sigma_{T(a_2)}(\theta_2(T(a_1))) \big)^{-1}.
	\end{eqnarray*}
	Using that $\nu^{-1}_{T(a_1)}(\mu_{\beta_{T(a_1)}(a_2)}(\theta_1(a_1)))= \mu_{a_2}(\nu^{-1}_{T(a_1)}(\theta_1(a_1)))$, the only part that survive is  
	\begin{align*}
		S\big(\nu^{-1}_{T(a_2)} \big(f(S(\nu^{-1}_{T(a_1)}(\theta_1(a_1))), a_2) \big)\big)\, S\big(\nu^{-1}_{T(a_2)} \big(\mu_{a_2}(\nu^{-1}_{T(a_1)}(\theta_1(a_1))) \big)\big) \,\big(\sigma_{T(a_2)} \big(S(\nu^{-1}_{T(a_1)}(\theta_1(a_1))) \big)\big)^{-1},
	\end{align*}
	which is 1 from \eqref{Rallcondn}  by putting $\nu^{-1}_{T(a_1)}(\theta_1(a_1))=k$. This completes the proof of the lemma.
\end{proof}

In view of Lemma \ref{rrb coboundary condition}, we define the first and the second cohomology group of $\mathcal{A}=(A, B, \beta, T)$  with coefficients in $\mathcal{K}=(K, L, \alpha, S)$ by $$\Ho^1_{RRB}(\mathcal{A}, \mathcal{K})=\Ker(\partial^1_{RRB})/  \IM(\partial^0_{RRB})$$
and $$\Ho^2_{RRB}(\mathcal{A}, \mathcal{K})=\Ker(\partial^2_{RRB})/  \IM(\partial^1_{RRB}).$$

\begin{question}
	Can the preceding construction be extended to develop a full cohomology theory for relative Rota-Baxter groups?
\end{question}

\medskip

\subsection{Equivalent extensions and second cohomology}\label{subsec equivalent extensions and cohomology}
Next, we define an equivalence of extensions of relative Rota-Baxter groups.

\begin{defn}
Two extensions of relative Rota-Baxter groups 
	$$\mathcal{E}_1 : \quad  {\bf 1} \longrightarrow (K,L,\alpha,S ) \stackrel{(i_1, i_2)}{\longrightarrow}  (H,G, \phi, R) \stackrel{(\pi_1, \pi_2)}{\longrightarrow} (A,B, \beta, T) \longrightarrow {\bf 1}$$
	and $$\mathcal{E}_2 : \quad {\bf 1} \longrightarrow (K,L,\alpha,S ) \stackrel{(i^\prime_1, i^\prime_2)}{\longrightarrow}  (H^\prime,G^\prime, \phi^\prime, R^\prime) \stackrel{(\pi^\prime_1, \pi^\prime_2)}{\longrightarrow} (A,B, \beta, T) \longrightarrow {\bf 1}$$
are said to be equivalent if there exists an isomorphism $$(\eta, \zeta):(H,G, \phi, R) \longrightarrow (H^\prime,G^\prime, \phi^\prime, R^\prime) $$
of relative Rota-Baxter groups  such that the following diagram commutes
	\begin{align}
	\begin{CD}\label{eqact}
		{\bf 1} @>>> (K,L,\alpha,S ) @>(i_1, i_2)>> (H,G, \phi, R) @>{{(\pi_1, \pi_2)} }>> (A,B, \beta, T) @>>> {\bf 1}\\ 
		&& @V{(\Id_K, ~\Id_L)} VV@V{(\eta, \zeta)} VV @V{(\Id_A, ~\Id_B)}VV \\
		{\bf 1} @>>> (K,L,\alpha,S ) @>(i^\prime_1, i^\prime_2)>>(H^\prime,G^\prime, \phi^\prime, R^\prime) @>{(\pi^\prime_1, \pi^\prime_2) }>> (A,B, \beta, T) @>>> {\bf 1}.
	\end{CD}
\end{align}
\end{defn}

\begin{prop}\label{eqactprop}
Equivalent extensions of relative Rota-Baxter groups induce identical associated actions.
\end{prop}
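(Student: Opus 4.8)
The plan is to fix a set-theoretic section of the first extension, transport it to the second via the equivalence $(\eta, \zeta)$, and then invoke the section-independence of the associated action (Proposition \ref{construction of actions} and Lemma \ref{properties of f}) to conclude.

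First I would choose a normalised section $(s_H, s_G)$ of $\mathcal{E}_1$ and set $s_H' = \eta \circ s_H$ and $s_G' = \zeta \circ s_G$. The commutativity of the right-hand square of \eqref{eqact}, which reads $\pi_1' \circ \eta = \pi_1$ and $\pi_2' \circ \zeta = \pi_2$, immediately gives $\pi_1' \circ s_H' = \Id_A$ and $\pi_2' \circ s_G' = \Id_B$, and normalisation is preserved because $\eta$ and $\zeta$ are homomorphisms fixing the identity. Hence $(s_H', s_G')$ is a legitimate section of $\mathcal{E}_2$. The commutativity of the left-hand square, together with our convention that the maps $i_1, i_2, i_1', i_2'$ are inclusions, forces $\eta|_K = \Id_K$ and $\zeta|_L = \Id_L$, which is the key identification used throughout.

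Next I would compute the associated action $(\nu', \mu', \sigma', f')$ of $\mathcal{E}_2$ relative to the transported section and match it term by term with $(\nu, \mu, \sigma, f)$. The essential tool is the morphism relation $\eta \circ \phi_g = \phi'_{\zeta(g)} \circ \eta$ coming from \eqref{rbb datum morphism} (applied to the homomorphism $(\eta,\zeta)$). For $\nu$, evaluating this at $g = s_G(b)$ on an element $k \in K$ and using $\eta|_K = \Id_K$ gives $\nu_b'(k) = \phi'_{\zeta(s_G(b))}(k) = \eta(\phi_{s_G(b)}(k)) = \nu_b(k)$, since $\nu_b(k) \in K$. For $\mu$ and $\sigma$ the computations are purely multiplicative: as $\eta$ is a homomorphism restricting to the identity on $K$, one gets $\mu_a'(k) = \eta(s_H(a))^{-1} k\, \eta(s_H(a)) = \eta\big(s_H(a)^{-1} k\, s_H(a)\big) = \mu_a(k)$, and analogously $\sigma_b' = \sigma_b$ using $\zeta|_L = \Id_L$. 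For $f$, I would apply the same relation at $g = l \in L$ (noting $\zeta(l) = l$) to rewrite $\phi'_l(\eta(s_H(a))) = \eta(\phi_l(s_H(a)))$, whence $f'(l, a) = \eta\big(s_H(a)^{-1}\phi_l(s_H(a))\big) = f(l,a)$.

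Finally, I would invoke section-independence: Proposition \ref{construction of actions} and Lemma \ref{properties of f} show that the associated action of $\mathcal{E}_2$ does not depend on the chosen section, so the quadruple $(\nu', \mu', \sigma', f')$ computed from $(s_H', s_G')$ is genuinely the associated action of $\mathcal{E}_2$. Combined with the term-by-term equalities above, this proves that the two associated actions coincide. I expect the only real care to lie in the $f$-term, where the ambient action $\phi'_l$ of $L$ appears and one must apply the intertwining relation (rather than plain conjugation) to commute $\eta$ past $\phi_l$; everything else is routine bookkeeping once $\eta|_K = \Id_K$ and $\zeta|_L = \Id_L$ are in place.
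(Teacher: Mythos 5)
Your proposal is correct and follows essentially the same route as the paper: transport a section of $\mathcal{E}_1$ through the equivalence $(\eta,\zeta)$ to obtain a section of $\mathcal{E}_2$, then invoke the section-independence established in Proposition \ref{construction of actions} and Lemma \ref{properties of f}. The only difference is one of completeness: the paper handles $\mu$ and $\sigma$ by citing classical group extension theory and dismisses $f$ with ``similar reasoning,'' whereas you carry out all four verifications uniformly via the intertwining relation $\eta\,\phi_g = \phi'_{\zeta(g)}\,\eta$ and the identifications $\eta|_K = \Id_K$, $\zeta|_L = \Id_L$ — a more self-contained rendering of the same argument.
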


\begin{proof}
We already observed in Proposition \ref{construction of actions} and Lemma \ref{properties of f} that the actions defined by distinct  set-theoretic  sections to a given extension are the same.
 Suppose that $$\mathcal{E}_1 : \quad {\bf 1} \longrightarrow (K,L,\alpha,S ) \stackrel{(i_1, i_2)}{\longrightarrow}  (H,G, \phi, R) \stackrel{(\pi_1, \pi_2)}{\longrightarrow} (A,B, \beta, T) \longrightarrow {\bf 1}$$
	and $$\mathcal{E}_2 : \quad {\bf 1} \longrightarrow (K,L,\alpha,S ) \stackrel{(i^\prime_1, i^\prime_2)}{\longrightarrow}  (H^\prime,G^\prime, \phi^\prime, R^\prime) \stackrel{(\pi^\prime_1, \pi^\prime_2)}{\longrightarrow} (A,B, \beta, T) \longrightarrow {\bf 1}$$
are equivalent extensions. Let $(\nu, \mu, \sigma, f)$ and $(\nu^\prime, \mu^\prime, \sigma^\prime, f^\prime)$ be associated actions of the extensions $\mathcal{E}_1$ and $\mathcal{E}_2$, respectively. The commutativity of the diagram \eqref{eqact} implies that the group extensions $H$ and $H'$ of $A$ by $K$ are equivalent. It follows from the classical extension theory of groups that $\mu= \mu^\prime$ and $\sigma=\sigma^\prime$ (see \cite{MR0672956}). Let  $s_G: B \rightarrow G$ be a set-theoretic section to the group extension $G$. It can be seen that the composition $\zeta s_G$  defines a set-theoretic section to the group extension $G^\prime$. Furthermore, by  the commutativity of the diagram \eqref{eqact}, the action defined in equation \eqref{nuact} induced by $\zeta s_G$ coincides with the $\nu$.  By a similar reasoning, we prove that $f=f^\prime$, which completes the proof.
\end{proof}

Let $\mathcal{A}= (A,B, \beta, T)$ be an arbitrary relative Rota-Baxter group and $\mathcal{K}=  (K,L,\alpha,S )$ a trivial relative Rota-Baxter group, where $K$ and $L$ are abelian groups. Let $\Ext(\mathcal{A}, \mathcal{K})$ denote the set of all equivalence classes of extensions of $\mathcal{A}$ by $\mathcal{K}$. By Proposition \ref{eqactprop}, each element of  $\Ext(\mathcal{A}, \mathcal{K})$ gives rise to a unique quadruple $(\nu, \mu, \sigma,f)$ of actions. Thus, we can employ the term associated action of an element of $\Ext(\mathcal{A}, \mathcal{K})$. Further, let  $\Ext_{(\nu, \mu, \sigma, f)}(\mathcal{A}, \mathcal{K})$ denote the set of equivalence classes of extensions of $\mathcal{A}$ by $\mathcal{K}$ for which the associated action is $(\nu, \mu, \sigma, f)$. Then we can write
$$\Ext(\mathcal{A}, \mathcal{K})=\bigsqcup_{(\nu, \mu, \sigma, f)} \Ext_{(\nu, \mu, \sigma, f)}(\mathcal{A}, \mathcal{K}).$$

Suppose that  
$$\mathcal{E} : \quad {\bf 1} \longrightarrow (K,L, \alpha,S ) \stackrel{(i_1, i_2)}{\longrightarrow}  (H,G, \phi, R) \stackrel{(\pi_1, \pi_2)}{\longrightarrow} (A,B, \beta, T) \longrightarrow {\bf 1}$$
is an abelian extension of relative Rota-Baxter groups and $(s_H, s_G)$ is a  set-theoretic section to $\mathcal{E}$. Let  $\tau_1, \tau_2, \rho$ and $\chi$ be the maps defined in  \eqref{mucocycle}, \eqref{sigmacocycle}, \eqref{atilde} and \eqref{RRB2}, respectively, corresponding to set-theoretic sections $s_H$ and $s_G$. It follows from \eqref{iso3}, \eqref{rho second equation} and \eqref{R1} that $\partial_{RRB}^2(\tau_1, \tau_2, \rho, \chi)$ is trivial. Hence, the quadruple $(\tau_1, \tau_2, \rho, \chi)$ is a 2-cocycle induced by the set-theoretic section $(s_H, s_G)$.

\begin{thm}\label{ext and cohom bijection}
Let $\mathcal{A}= (A,B, \beta, T)$ be a relative Rota-Baxter group and $\mathcal{K}=  (K,L,\alpha,S )$ a trivial relative Rota-Baxter group, where $K$ and $L$ are abelian groups.  Let  $(\nu, \mu, \sigma, f)$ be the quadruple of actions that makes $\mathcal{K}$ into an $\mathcal{A}$-module. Then there is a bijection between $\Ext_{(\nu, \mu, \sigma, f)}(\mathcal{A}, \mathcal{K})$ and $\Ho^2_{RRB}(\mathcal{A}, \mathcal{K})$.  
\end{thm}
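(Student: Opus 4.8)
The plan is to construct two mutually inverse maps between $\Ext_{(\nu,\mu,\sigma,f)}(\mathcal{A},\mathcal{K})$ and $\Ho^2_{RRB}(\mathcal{A},\mathcal{K})$, following the template of the classical correspondence between abelian group extensions and second group cohomology, but carrying along the extra data $(\rho,\chi)$ that records the interaction of the extension with the action $\phi$ and the operator $R$. Write $\Theta$ for the map from extension classes to cohomology and $\Psi$ for its prospective inverse.

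For the forward map $\Theta$, I would start from an abelian extension $\mathcal{E}$ with associated action $(\nu,\mu,\sigma,f)$, choose a set-theoretic section $(s_H,s_G)$, and form the quadruple $(\tau_1,\tau_2,\rho,\chi)$ defined in \eqref{mucocycle}, \eqref{sigmacocycle}, \eqref{atilde} and \eqref{RRB2}. As noted just before the statement, the relations \eqref{iso3}, \eqref{rho second equation} and \eqref{R1} say exactly that $\partial^2_{RRB}(\tau_1,\tau_2,\rho,\chi)$ is trivial, so this is a $2$-cocycle. To show $\Theta$ is well defined I would first analyse a change of section: any other section has the form $s_H'(a)=s_H(a)\,\lambda_1(a)$ and $s_G'(b)=s_G(b)\,\lambda_2(b)$ for normalised maps $\lambda_1\colon A\to K$, $\lambda_2\colon B\to L$, and substituting into the four defining formulas (using that $K,L$ are abelian and the section-independence recorded in Proposition \ref{construction of actions} and Lemma \ref{properties of f}) shows the new quadruple differs from the old by $\partial^1_{RRB}(\lambda_1,\lambda_2)$. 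Second, Proposition \ref{eqactprop} ensures that equivalent extensions share the same associated action, and the induced group isomorphism together with the commuting diagram yields cohomologous quadruples. Hence $\Theta$ descends to a well-defined map into $\Ho^2_{RRB}(\mathcal{A},\mathcal{K})$.

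For the backward map $\Psi$, given a $2$-cocycle $(\tau_1,\tau_2,\rho,\chi)\in\Ker(\partial^2_{RRB})$ I would set $H=A\times_{\tau_1}K$ and $G=B\times_{\tau_2}L$ (the classical twisted products) and define $\phi$ and $R$ by the formulas of Propositions \ref{phiallcondn} and \ref{Rallcondn}. The essential point is that the systems of equations appearing in those two propositions split into the module-compatibility relations of Definition \ref{moddefn} (already available since $\mathcal{K}$ is an $\mathcal{A}$-module, together with the compatibility relating $\nu,f,\sigma$ and $\beta$) and the vanishing of $\gamma_1,\gamma_2,\gamma_3$ and of $\partial_\mu^2(\tau_1),\partial_\sigma^2(\tau_2)$, which is precisely the cocycle condition. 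Thus $\phi$ is a homomorphism into $\Aut(H)$ and $R$ is a relative Rota-Baxter operator, so $(H,G,\phi,R)$ is a relative Rota-Baxter group; the evident inclusions and projections satisfy \eqref{rbb datum morphism} and give an abelian extension with associated action $(\nu,\mu,\sigma,f)$. To see $\Psi$ is well defined, I would check that replacing the cocycle by $\partial^1_{RRB}(\lambda_1,\lambda_2)\cdot(\tau_1,\tau_2,\rho,\chi)$ produces an equivalent extension, via the isomorphism $(\eta,\zeta)$ with $\eta(a,k)=(a,\lambda_1(a)k)$ and $\zeta(b,l)=(b,\lambda_2(b)l)$, verifying that it intertwines the two copies of $\phi$ and of $R$.

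Finally $\Theta$ and $\Psi$ are mutually inverse: for $\Theta\circ\Psi$ one evaluates on the canonical section $a\mapsto(a,1)$, $b\mapsto(b,1)$ of the constructed extension, which returns the starting quadruple; for $\Psi\circ\Theta$ one uses the section-induced identifications $H\cong A\times_{\tau_1}K$ and $G\cong B\times_{\tau_2}L$ coming from \eqref{feqn} and \eqref{relativecon} to exhibit an equivalence with the original $\mathcal{E}$. I expect the main obstacle to lie in the relative Rota-Baxter layer rather than the group layer: the genuinely new verifications are that $\zeta$ intertwines the operators $R$ and $R'$ and that the $\rho$- and $\chi$-components transform correctly under change of section and under equivalence, since these involve $T,S,\chi$ and the twisted coboundary $\delta^1_\sigma$ through $\gamma_3$ and $\lambda_2$. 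The classical part, governed by $\mu,\sigma,\tau_1,\tau_2$, is supplied by standard extension theory of groups, so the real effort is confined to keeping the operator-level cocycle and coboundary bookkeeping consistent.
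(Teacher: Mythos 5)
Your proposal is correct and follows essentially the same route as the paper's proof: the forward map sends an extension to the quadruple $(\tau_1,\tau_2,\rho,\chi)$ attached to a section (a cocycle by \eqref{iso3}, \eqref{rho second equation}, \eqref{R1}), well-definedness is checked exactly as you describe via change of section producing a $\partial^1_{RRB}$-coboundary and via Proposition \ref{eqactprop} for equivalent extensions, and the inverse map builds $(A\times_{\tau_1}K,\,B\times_{\tau_2}L,\,\phi,\,R)$ from a cocycle using Propositions \ref{phiallcondn} and \ref{Rallcondn}, with cohomologous cocycles yielding equivalent extensions through the same shear isomorphism $(a,k)\mapsto(a,k\theta_1(a))$, $(b,l)\mapsto(b,l\theta_2(b))$ that you propose. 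Your closing observation that the genuinely new verifications live in the $\rho$, $\chi$, $\gamma_3$, $\lambda_2$ components is an accurate reading of where the paper's computations go beyond classical group extension theory.
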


\begin{proof}
Let $\mathcal{A}= (A,B, \beta, T) $ and $\mathcal{K}=  (K,L,\alpha,S )$. Let $\mathcal{E}$ be an extension representing an element of  $\Ext_{(\nu, \mu, \sigma, f)}(\mathcal{A}, \mathcal{K})$, where
$$\mathcal{E} : \quad {\bf 1} \longrightarrow (K,L,\alpha,S ) \stackrel{(i_1, i_2)}{\longrightarrow}  (H,G, \phi, R) \stackrel{(\pi_1, \pi_2)}{\longrightarrow} (A,B, \beta, T) \longrightarrow {\bf 1}$$
We define
$$\Phi: \Ext_{(\nu, \mu, \sigma, f)}(\mathcal{A}, \mathcal{K}) \longrightarrow \Ho^2_{RRB}(\mathcal{A}, \mathcal{K}) \quad \textrm{by} \quad \Phi \big([\mathcal{E}]\big)= [(\tau_1, \tau_2, \rho, \chi )] ,$$
where$(\tau_1, \tau_2, \rho, \chi)$ is the 2-cocycle induced by some fixed  set-theoretic section $(s_H, s_G)$ to $\mathcal{E}$. We claim that $\Phi$ is well-defined. Thus, we need to show that $\Phi$ is independent of the choice of a section to $\mathcal{E}$, as well as the choice of a representative of the equivalence class $[\mathcal{E}]$.  Let $(s_H, s_G)$ and $(s_H^\prime,  s_G^\prime)$ be two distinct set-theoretic sections of  $\mathcal{E}$. Two sections of a group extension of  $A$ by $K$ differ by an element of  $K$. Similarly, two sections of a group extension of  $B$ by $L$ differ by an element of  $L$. Thus, there exist  maps $\theta_1: A \rightarrow K$ and $\theta_2: B \rightarrow L$ such that $s_H(a)s^\prime_H(a)^{-1}=\theta_1(a)$ and $s_G(b)s^\prime_G(b)^{-1}=\theta_2(b)$ for all $a \in A$ and $ b \in B$. Let $(\tau_1, \tau_2, \rho, \chi )$ and $(\tau^\prime_1, \tau^\prime_2, \rho^\prime, \chi^\prime )$ be 2-cocycles induced by $(s_H, s_G)$ and $(s_H^\prime,  s_G^\prime)$, respectively. Direct calculations yield the following:
\begin{enumerate}
\item  $\tau_1(a_1,a_2) \,(\tau^\prime_1(a_1, a_2))^{-1}=\partial^{1}_{\mu}(\theta_1)(a_1, a_2)$ for all $a_1, a_2 \in A$,
\item  $\tau_2(b_1, b_2) \,(\tau^\prime_2(b_1, b_2))^{-1}=\partial^{1}_{\sigma}(\theta_1)(b_1, b_2)$ for all $b_1, b_2 \in B$,
\item  $\rho(a, b) \,(\rho^\prime(a,b))^{-1}=\nu_b(f(\theta_2(b), a)\theta_1(a)) \,(\theta_1(\beta{_b(a)}))^{-1}$ for all $a \in A$ and $b \in B$,
\item $\chi(a) \,(\chi^\prime(a))^{-1} = S(\nu^{-1}_{T(a)}(\theta_1(a))) \,(\theta_2(T(a)))^{-1}$ for all $a \in A$.
\end{enumerate}
This gives $$(\tau_1, \tau_2, \rho, \chi ) \,(\tau^\prime_1, \tau^\prime_2, \rho^\prime, \chi^\prime )^{-1}=\partial^1_{RRB}(\theta_1, \theta_2).$$
Hence, the cohomology class of a 2-cocycle is independent of the choice of a defining section. Likewise, we can prove that equivalent extensions possess cohomologous 2-cocycles. This proves our claim that $\Phi$ is well-defined. To demonstrate the bijectivity of $\Phi$, we define its inverse explicitly. 

Consider a 2-cocycle $(\tau_1, \tau_2, \rho, \chi) \in \Ker(\partial_{RRB}^2)$. Define $H=A\times_{\tau_1} K$ and $G= B \times_{\tau_2} L$ to be the group extensions of $A$ by $K$ and $B$ by $L$ associated to the group 2-cocycles $\tau_1$ and $\tau_2$, respectively. Further, define $\phi: G \rightarrow \Aut(H)$ by 
\begin{align}
\phi_{(b,l)}(a,k)=\big(\beta{_{b}(a)}, ~\rho(a,b) \,\nu_b(f(l,a)k)\big)
\end{align} 
and $R: H \rightarrow G$ by 
\begin{align}
R(a,k)=\big(T(a), ~\chi(a)\,S(\nu^{-1}_{T(a)}(k))\big).
\end{align}
Using the fact that $(\tau_1, \tau_2, \rho, \chi) \in \Ker(\partial_{RRB}^2)$, it follows that $(H, G, \phi, R)$ is a relative Rota-Baxter group and is an extension of  $(A,B, \beta, T)$ by  $(K,L, \alpha,S )$ denoted by 
$$\mathcal{E}(\tau_1, \tau_2, \rho, \chi) : \quad {\bf 1} \to (K,L, \alpha,S ) \stackrel{(i_1, i_2)}{\longrightarrow}  (H, G, \phi, R) \stackrel{(\pi_1, \pi_2)}{\longrightarrow} (A,B, \beta, T) \to {\bf 1}.$$
Define 
$$\Psi:  \Ho^2_{RRB}(\mathcal{A}, \mathcal{K}) \longrightarrow \Ext_{(\nu, \mu, \sigma, f)}(\mathcal{A}, \mathcal{K}) \quad \textrm{by} \quad \Psi([(\tau_1, \tau_2, \rho, \chi)])=[\mathcal{E}(\tau_1, \tau_2, \rho, \chi)].$$

We claim  that $\Psi$ is the inverse of $\Phi$. To substantiate this claim, it is necessary to establish the well-definedness of $\Psi$. Let $(\tau_1, \tau_2, \rho, \chi )$ and $(\tau^\prime_1, \tau^\prime_2, \rho^\prime, \chi^\prime ) \in \Ker(\partial_{RRB}^2)$ be two cohomologous 2-cocycles. By definition, there exist  maps $\theta_1: A \rightarrow K$ and $\theta_2: B \rightarrow L$ such that
$$(\tau_1, \tau_2, \rho, \chi ) \, (\tau^\prime_1, \tau^\prime_2, \rho^\prime, \chi^\prime )^{-1}=\partial^1_{RRB}(\theta_1, \theta_2).$$
Let $(A\times_{\tau_1} K,B \times_{\tau_2} L, \phi, R)$ and $(A\times_{\tau^\prime_1} K,B \times_{\tau^\prime_2} L, \phi^\prime, R^\prime)$ be the relative Rota-Baxter groups induced by the 2-cocycles $(\tau_1, \tau_2, \rho, \chi )$ and $(\tau^\prime_1, \tau^\prime_2, \rho^\prime, \chi^\prime )$, respectively. Define $\gamma_1: A \times_{\tau_1} K \rightarrow A \times_{\tau^{\prime}_1} K$ by $\gamma_1(a,k) = (a, k\theta_1(a))$ and $\gamma_2: B \times_{\tau_2} L \rightarrow B \times_{\tau^{\prime}_2} L$ by $\gamma_2(b,l) = (b, l\theta_2(a))$. Straightforward calculations show that $(\gamma_1, \gamma_2)$ is an isomorphism between relative Rota-Baxter groups $(A\times_{\tau_1} K,B \times_{\tau_2} L, \phi, R)$ and $(A\times_{\tau^\prime_1} K,B \times_{\tau^\prime_2} L, \phi^\prime, R^\prime)$ such that the following diagram commutes
 \begin{align*}
 	\begin{CD}
 		{\bf 1} @>>> (K,L,\alpha,S ) @>(i_1, i_2)>> (A\times_{\tau_1} K,B \times_{\tau_2} L, \phi, R) @>{{(\pi_1, \pi_2)} }>> (A,B, \beta, T) @>>> {\bf 1}\\ 
 		&& @V{(\Id_K, ~\Id_L)} VV@V{(\gamma_1, \gamma_2)} VV @V{(\Id_A, ~\Id_B) }VV \\
 		{\bf 1} @>>> (K,L,\alpha,S ) @>(i^\prime_1, i^\prime_2)>>(A\times_{\tau^\prime_1} K,B \times_{\tau^\prime_2} L, \phi^\prime, R^\prime)@>{(\pi^\prime_1, \pi^\prime_2) }>> (A,B, \beta, T) @>>> {\bf 1}.
 	\end{CD}
 \end{align*}
This shows that  $\mathcal{E}(\tau_1, \tau_2, \rho, \chi)$ and $\mathcal{E}(\tau^\prime_1, \tau^\prime_2, \rho^\prime, \chi^\prime)$ are equivalent extensions, and hence the map $\Psi$ is well-defined. By direct inspection, one can readily verify that  $\Phi$ and $\Psi$ are inverse of each other. This concludes the proof.
\end{proof}

The following is immediate from the definition.

\begin{prop}
 Let $\mathcal{A}=(A, B, \beta, T)$ be a  relative Rota-Baxter group and $\mathcal{K}=(K, L, \alpha, S)$ a module over  $\mathcal{A}$ via the action $(\nu, \mu, \sigma, f)$. Then  $\Ho^0_{RRB}(\mathcal{A}, \mathcal{K})$ is the fixed-point set of actions $\nu$, $\mu$, and $\sigma$, that is,
$$\Ho^0_{RRB}(\mathcal{A}, \mathcal{K})=\big\{(k,l) \in K \times L ~\mid~ \mu_a(k)=k,~ \nu_b(k)=k, ~\sigma_b(l)=l  ~\mbox{ for all } a \in A \mbox{ and }b \in B \big\}.$$
\end{prop}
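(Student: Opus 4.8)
The plan is to identify $\Ho^0_{RRB}(\mathcal{A}, \mathcal{K})$ with $\Ker(\partial_{RRB}^0)$, as dictated by the cochain complex constructed above (there being no $\partial^{-1}$ to quotient by), and then simply to unwind the two definitions that enter, namely that of the coboundary $\partial_{RRB}^0$ and that of the group $C^0_{RRB}=K \times_{(\nu, \mu, \sigma, f)} L$ on which it acts. The whole argument is a matching of conditions, so I would organise it as a direct double inclusion carried out in one pass.

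First I would observe that $\partial_{RRB}^0(k,l)=(\kappa_k,\omega_l)$ is trivial precisely when both $\kappa_k$ and $\omega_l$ are the trivial maps. Since $\kappa_k(a)=\mu_a(k)k^{-1}$ and $\omega_l(b)=\sigma_b(l)l^{-1}$, this is equivalent to $\mu_a(k)=k$ for all $a \in A$ and $\sigma_b(l)=l$ for all $b \in B$. Hence $\Ker(\partial_{RRB}^0)$ consists of exactly those pairs $(k,l)\in C^0_{RRB}$ that in addition satisfy these two fixed-point identities.

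Next I would recall that membership in the domain $C^0_{RRB}=K \times_{(\nu, \mu, \sigma, f)} L$ already builds in the further requirement $\nu_b(k)=k$ for all $b\in B$, together with the three auxiliary conditions $f(\sigma_b(l),a)=f(l,a)$, $\sigma_{T(a)}(l)=l$, and $S(\mu_a(k))=S(k)$. The only point requiring a genuine (though small) check is that once the fixed-point identities $\mu_a(k)=k$, $\nu_b(k)=k$, and $\sigma_b(l)=l$ hold, these three auxiliary conditions become automatic: substituting $\sigma_b(l)=l$ turns $f(\sigma_b(l),a)=f(l,a)$ into an identity; specialising $\sigma_b(l)=l$ to $b=T(a)$ yields $\sigma_{T(a)}(l)=l$; and substituting $\mu_a(k)=k$ turns $S(\mu_a(k))=S(k)$ into an identity. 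Thus the auxiliary constraints contribute nothing beyond the three fixed-point conditions.

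Combining these observations gives both inclusions simultaneously: a pair $(k,l)$ lies in $\Ker(\partial_{RRB}^0)$ if and only if it lies in $C^0_{RRB}$ and satisfies $\mu_a(k)=k$ and $\sigma_b(l)=l$, which by the previous paragraph is equivalent to satisfying all three identities $\mu_a(k)=k$, $\nu_b(k)=k$, $\sigma_b(l)=l$ (the remaining defining clauses of $C^0_{RRB}$ being redundant). This is exactly the asserted description of $\Ho^0_{RRB}(\mathcal{A}, \mathcal{K})$. I do not expect any real obstacle here; the entire content is the bookkeeping verification that the defining conditions of $C^0_{RRB}$ collapse onto the stated fixed-point set once the kernel conditions are imposed, which is precisely why the statement can be recorded as \emph{immediate from the definition}.
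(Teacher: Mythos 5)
Your proposal is correct and matches the paper's intent exactly: the paper offers no written proof, recording the proposition as immediate from the definitions, and your argument is precisely the definition-unwinding that justifies this. Your one substantive check — that the auxiliary clauses $f(\sigma_b(l),a)=f(l,a)$, $\sigma_{T(a)}(l)=l$, and $S(\mu_a(k))=S(k)$ defining $C^0_{RRB}=K\times_{(\nu,\mu,\sigma,f)}L$ become redundant once $\mu_a(k)=k$ and $\sigma_b(l)=l$ are imposed — is exactly the bookkeeping needed to identify $\Ker(\partial^0_{RRB})$ with the stated fixed-point set.
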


\begin{remark}
If $\mathcal{A}= (1,B, 1, 1)$  and $\mathcal{K}=  (1,L,1, 1)$ then $\Ho^2_{RRB}(\mathcal{A}, \mathcal{K}) \cong \Ho^2(B,L)$. Similarly, if  $\mathcal{A}= (A,1, 1, 1)$ and $\mathcal{K}=  (K,1,1, 1)$ then $\Ho^2_{RRB}(\mathcal{A}, \mathcal{K}) \cong \Ho^2(A,K)$, where $\Ho^2$ denotes the second cohomology of groups. 
\end{remark}

\section{Extensions and cohomology of skew left braces}\label{sec extensions skew braces}

We recall some necessary results on extensions of skew left braces by abelian groups \cite{NMY1}.  Let  $(M, \cdot, \circ_M)$ be a skew left brace and 
$$\mathcal{E}: \quad {\bf 1} \longrightarrow I \stackrel{i_1}{\longrightarrow}  E \stackrel{\pi_1}{\longrightarrow} M \longrightarrow {\bf 1}$$ an extension of skew left braces such that $I$ is an abelian group viewed as a trivial brace.  The group $I$ is regarded as a subgroup of $E$, and the group operation in the additive group of $E$ will be denoted by juxtaposition.  Let $s : M \rightarrow E$ be a set-theoretic section to $\mathcal{E}$. Then, for all $ m \in M$ and $y \in I$, we  define  maps $\xi, 	\epsilon : M^{(\circ)} \rightarrow \Aut(I)$ and $\zeta :M^{(\cdot)} \rightarrow \Aut(I)$  by
\begin{eqnarray}
	\xi_m(y) & = &  \lambda^E_{s(m)}(y),\label{action1 sb}\\
	\zeta_m(y) & = & s(m)^{-1}  \,y \, s(m),\label{action2 sb}\\
	\epsilon_m(y) & = & s(m)^{\dagger} \circ_E y \circ_E s(m),\label{action3 sb}
\end{eqnarray}
for $m \in M$ and $y \in I$, where $x^{-1}$ and $x^\dagger$ denotes the inverse of $x$ in $E^{(\circ)}$ and $E^{(\cdot)}$, respectively. It is not difficult to see  that the map $\xi$ is  a homomorphism, whereas the maps $\zeta, \epsilon $ are  anti-homomorphisms. Furthermore, these maps are independent of the choice of the set-theoretic section \cite[Proposition 3.4]{NMY1}. The triplet $(\xi, \zeta, \epsilon)$ is called the \emph{associated action} of the extension $\mathcal{E}$. 
\par 

Next, we recall the definition of the second cohomology group of a skew left brace $(M, \cdot, \circ)$ with coefficients in an abelian group $I$ viewed as a trivial brace.  Let  $\xi: M ^{(\circ)} \rightarrow \Aut (I)$ be a homomorphism and   $ \zeta: M^{(\cdot)} \rightarrow \Aut(I)$  and $\epsilon : M^{(\circ)} \rightarrow \Aut(I)$ be anti-homomorphisms satisfying the following conditions
\begin{eqnarray*}
	\xi_{m_1 \cdot m_2}(\epsilon_{m_1 \cdot m_2}(y)) \,\zeta_{m_2}(y) & = & \zeta_{m_1}(\xi_{ m_1 }(\epsilon_{m_1}(y))) \, \xi_{ m_2} (\epsilon_{m_2}(y)),\\
	\zeta_{m^{-1}_1 \cdot (m_1 \circ_M m_2)}(\xi_{m_1}(y))  & =& \xi_{m_1}(\zeta_{m_2}(y)),
\end{eqnarray*}
for all $m_1, m_2 \in M$ and $y \in I$. In \cite{NMY1}, such a triplet is referred as a  \emph{good triplet} of action of $H$ on $I$.
\par 

Let $g,f: M \times M \rightarrow I$ be maps satisfying
\begin{small}
	\begin{align} 
		g(m_2, m_3)  \,g(m_1 \cdot m_2, m_3)^{-1} \,g(m_1, m_2 \cdot m_3) \,\zeta_{m_3}( g(m_1, m_2))^{-1} =&\;  1 \label{sbcocycle1},\\
		\xi_{m_1}(f(m_2, m_3)) \, f(m_1 \circ_M m_2, m_3)^{-1} \, f(m_1, m_2 \circ_M m_3) \, \xi_{m_1 \circ_M m_2 \circ_M m_3} (\epsilon_{m_3}(\nu^{-1}_{m_1 \circ_M m_2}f(m_1, m_2)))^{-1} 	= & \;  1, \label{sbcocycle2}\\
		& \notag\\
		\xi_{m_1}(g(m_2, m_3))\, \zeta_{m_1 \circ_M m_3}(g(m_1, m^{-1}_1)) \,\zeta_{m_1 \circ_M m_3}(g(m_1 \circ_M m_2,  m^{-1}_1))^{-1} \label{sbcocycle3} \\ 
		g((m_1 \circ_M m_2) m^{-1}_1, m_1 \circ_M m_3)^{-1}\, \zeta_{- m_1\cdot (m_1 \circ_M m_3)}\, (f(m_1, m_2))^{-1} \, f(m_1, m_2 \cdot m_3)   f(m_1, m_3)^{-1} = &\; 1, \notag
	\end{align}
\end{small}
for all $m_1, m_2, m_3 \in M$.  Let

\begin{equation*}
	\Z_N^2(M, I) = \Big\{ (g ,f) \quad \Big \vert  \quad g,f:M \times M \rightarrow I ~ \textrm{satisfy}~\eqref{sbcocycle1}, \eqref{sbcocycle2}, \eqref{sbcocycle3} ~\textrm{and}~ \textrm{vanish on degenerate tuples}  \Big\},
\end{equation*}
and $\B_N^2(M, I)$ be the collection of the pairs $(g, f) \in  \Z_N^2(M, I)$ such that there exists a map $\theta:M \to I$ satisfying
\begin{eqnarray*}
	g(m_1, m_2) &=& \xi_{m_1\cdot m_2}(\theta(m_1\cdot m_2)^{-1}) ~\zeta_{m_2}((\xi_{m_1}(\theta(m_1))))~ \xi_{m_2}(\theta(m_2)),\\
	f(m_1, m_2) &=& \theta(m_1 \circ m_2)^{-1}  ~\epsilon_{m_2}(\theta(m_1)) ~ \theta(m_2),
\end{eqnarray*}
for all $m_1, m_2 \in M$.  Then the second cohomology group of  $(M, \cdot, \circ_M)$ with coefficients in $I$ corresponding to the given good triplet of actions $(\nu, \mu, \sigma)$ is defined as $\Ho^2_N(M, I) =  \Z_N^2(M, I)/\B_N^2(M, I)$.
\par

Let $\Ext_{(\xi, \zeta, \epsilon)}(M, I)$ denote the set of equivalence classes of those skew left brace extensions of $M$ by $I$ whose corresponding triplet of actions is $(\xi, \zeta, \epsilon)$. It is proven in \cite[Proposition 3.4]{NMY1} that $(\xi, \zeta, \epsilon)$ forms a good triplet of actions for $M$ on $I$ and that the following holds \cite[Theorem A]{NMY1}.

\begin{thm}\label{gbij-thm sb}
	Let $(M, \cdot, \circ)$ be a skew left brace and $(I, +)$ an abelian group viewed as a trivial brace. Then there is a bijection $\Lambda:\Ext_{(\xi, \zeta, \epsilon)}(M, I) \rightarrow \Ho^2_N(M, I)$ given by $\Lambda([\mathcal{E}])=[(\tau, \tilde{\tau})]$, where 
	\begin{eqnarray*}
		\tau(m_1, m_2) &= &  s(m_1 \cdot m_2)^{-1}  s(m_1)  s(m_2),\\
		\tilde{\tau}(m_1, m_2) &=& s(m_1 \circ m_2)^{-1}  \,(s(m_1) \circ  s(m_2)),
	\end{eqnarray*}
	and $s$ is a set-theoretic section to $\mathcal{E}$.
\end{thm}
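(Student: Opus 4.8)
The plan is to adapt, almost verbatim, the strategy used for Theorem~\ref{ext and cohom bijection}: I would produce the map $\Lambda$ together with an explicit inverse $\Psi$, and check that both descend to the relevant equivalence classes. The whole argument separates into the additive part, the multiplicative part, and a single genuinely new compatibility coming from the skew left brace axiom.

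First I would establish that $\Lambda$ is well defined. Fixing a section $s$, the additive group $E^{(\cdot)}$ is an ordinary group extension of $M^{(\cdot)}$ by $I$ with the conjugation action $\zeta$, so $\tau$ is the classical group $2$-cocycle and automatically satisfies \eqref{sbcocycle1}; reading off the multiplicative group $E^{(\circ)}$ in the same way shows that $\tilde\tau$ satisfies \eqref{sbcocycle2}. The third relation \eqref{sbcocycle3} I would obtain by expanding the brace identity
$$ s(m_1) \circ \big( s(m_2) \cdot s(m_3) \big) = \big( s(m_1) \circ s(m_2) \big) \cdot s(m_1)^{\dagger} \cdot \big( s(m_1) \circ s(m_3) \big) $$
inside $E$, rewriting each bracket through the defining formulas for $\tau$, $\tilde\tau$ and the actions \eqref{action1 sb}--\eqref{action3 sb}, and then comparing the $I$-components. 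Hence $(\tau, \tilde\tau) \in \Z_N^2(M, I)$. If $s'$ is a second section, then $s(m) = s'(m)\,\theta(m)$ for a unique map $\theta : M \to I$, and a routine substitution shows that the two induced pairs differ precisely by the coboundary attached to $\theta$, landing in $\B_N^2(M, I)$; the same bookkeeping, applied to the vertical isomorphism of an equivalence, shows that equivalent extensions produce cohomologous pairs. Thus $[\Lambda(\mathcal{E})]$ depends only on $[\mathcal{E}]$.

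For surjectivity and injectivity I would build the inverse. Given $(\tau, \tilde\tau) \in \Z_N^2(M, I)$, I define operations on $M \times I$ by
\begin{align*}
(m_1, y_1) \cdot (m_2, y_2) &= \big( m_1 \cdot m_2,\; \zeta_{m_2}(y_1)\, y_2\, \tau(m_1, m_2) \big),\\
(m_1, y_1) \circ (m_2, y_2) &= \big( m_1 \circ m_2,\; \epsilon_{m_2}(y_1)\, y_2\, \tilde\tau(m_1, m_2) \big),
\end{align*}
mirroring the construction $H = A \times_{\tau_1} K$ from the proof of Theorem~\ref{ext and cohom bijection}. Conditions \eqref{sbcocycle1} and \eqref{sbcocycle2} make $(M \times I, \cdot)$ and $(M \times I, \circ)$ groups by classical extension theory, and the projection onto $M$ together with the inclusion of $I$ are morphisms of both structures. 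The crux is to verify the skew left brace axiom for $(M \times I, \cdot, \circ)$: expanding $x \circ (y \cdot z)$ and $(x \circ y) \cdot x^{-1} \cdot (x \circ z)$ on representatives and equating $I$-components must reduce exactly to \eqref{sbcocycle3} together with the good-triplet conditions relating $\xi, \zeta, \epsilon$. This is the step I expect to be the main obstacle, since it is a dense bracket computation in which both cocycles and all three actions intertwine, and one must simultaneously check that the $\lambda$-map of the constructed brace reproduces the prescribed $\xi$. Setting $\Psi([(\tau, \tilde\tau)]) = [\mathcal{E}(\tau, \tilde\tau)]$, where $\mathcal{E}(\tau, \tilde\tau)$ is the resulting extension, then yields the candidate inverse.

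Finally I would confirm that $\Psi$ is well defined and mutually inverse to $\Lambda$. If $(\tau, \tilde\tau)$ and $(\tau', \tilde\tau')$ differ by the coboundary of some $\theta : M \to I$, the map $(m, y) \mapsto (m,\, y\,\theta(m))$ is checked to be an isomorphism of skew left braces fitting into the equivalence diagram, exactly as in the final commutative diagram of the proof of Theorem~\ref{ext and cohom bijection}; hence $\Psi$ factors through $\Ho^2_N(M, I)$. Evaluating $\Lambda \circ \Psi$ with the canonical section $m \mapsto (m, 1)$ returns $(\tau, \tilde\tau)$ unchanged, while $\Psi \circ \Lambda$ recovers the original extension up to the equivalence induced by a chosen section. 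Therefore $\Lambda$ is a bijection, establishing the claim.
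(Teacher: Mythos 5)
You should first note that the paper does not actually prove Theorem \ref{gbij-thm sb}: it is imported from \cite[Theorem A]{NMY1}, with \cite[Proposition 3.4]{NMY1} supplying that $(\xi,\zeta,\epsilon)$ is a good triplet, so your proposal must stand entirely on its own. It does not, because of one structural error that infects both halves of your argument: you assume that in the coordinates $(m,y)\leftrightarrow s(m)\cdot y$ coming from the additive splitting, \emph{both} operations of $E$ take the plain extension form. Only the additive one does. Since $y\in I$ enters the multiplicative structure through $s(m)\cdot y=s(m)\circ \xi_m^{-1}(y)$ (this is exactly the definition \eqref{action1 sb} of $\xi$), pushing $y_1$ past $s(m_2)$ and collecting terms yields a $\xi$-twisted law,
\[
(m_1,y_1)\circ(m_2,y_2)=\big(m_1\circ m_2,\ \tilde\tau(m_1,m_2)\,\xi_{m_1\circ m_2}\big(\epsilon_{m_2}(\xi_{m_1}^{-1}(y_1))\big)\,\xi_{m_1}(y_2)\big),
\]
not the plain formula you wrote. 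This is precisely why the conditions \eqref{sbcocycle2}, \eqref{sbcocycle3} and the coboundary formulas defining $\B_N^2(M,I)$ mix $\xi$, $\zeta$ and $\epsilon$. In particular, \eqref{sbcocycle2} is visibly \emph{not} the classical $\epsilon$-cocycle identity, so your claim that $\tilde\tau$ satisfies it ``automatically'' by reading $E^{(\circ)}$ as an ordinary group extension is unfounded; that verification is a genuine computation with the $\xi$-twists, not classical bookkeeping.

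The same error makes your candidate inverse $\Psi$ fail outright, not merely remain unverified. With your pair of formulas, normalization ($\zeta_1=\epsilon_1=\Id$, $\tau(m,1)=\tilde\tau(m,1)=1$) gives $(m,y_1)\circ(1,y)=(m,y_1y)=(m,y_1)\cdot(1,y)$, so in your constructed object $a\circ y=a\cdot y$ for every $y\in I$, and hence $\lambda^{M\times I}_a$ restricts to the identity on $I$. By \eqref{action1 sb}, the associated action $\xi$ of the extension you build is therefore trivial. Consequently, whenever the prescribed $\xi$ is non-trivial, $\Psi$ does not even land in $\Ext_{(\xi,\zeta,\epsilon)}(M,I)$, $\Lambda\circ\Psi$ cannot be the identity, and the brace axiom for your $(M\times I,\cdot,\circ)$ does not reduce to \eqref{sbcocycle3} as you hoped. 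The repair is to define the multiplicative operation by the $\xi$-twisted formula above (equivalently: fix one coordinate system once and for all and accept that exactly one of the two operations is twisted); with that correction, your overall plan --- well-definedness of $\Lambda$ under change of section and equivalence, an explicit $\Psi$, and the check that the two are mutually inverse --- is the right skeleton and is how such classification results are proved in \cite{NMY1}.
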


Suppose that 
\begin{equation*}
\mathcal{E} : \quad {\bf 1} \longrightarrow (K,L,\alpha,S ) \stackrel{(i_1, i_2)}{\longrightarrow}  (H,G, \phi, R) \stackrel{(\pi_1, \pi_2)}{\longrightarrow} (A,B, \beta, T) \longrightarrow {\bf 1}
\end{equation*}
 is an abelian extension of relative Rota-Baxter groups and 
\begin{equation*}
\mathcal{E}_{SB}: \quad  {\bf 1} \longrightarrow K_{S} \stackrel{i_1}{\longrightarrow}  H_{R} \stackrel{\pi_1}{\longrightarrow} A_{T} \longrightarrow {\bf 1}
\end{equation*}
 the induced skew left brace extension. Let $(s_H, s_G)$ be a set-theoretic section to $\mathcal{E}$, and  $(\xi, \zeta, \epsilon)$ the associated action of the extension $\mathcal{E}_{SB}$. For each $a \in A$ and $k \in K$, it is easy to see that  $$\zeta_a= \mu_a$$ and 
$$\xi_a(k)= \phi_{R(s_H(a))}(k)=\phi_{s_G(T(a))\chi(a)}(k)=\nu_{T(a)}(k)$$ 
since $\phi_{\chi(a)}$ acts as the identity.  To determine the map $\epsilon$, we have
	\begin{eqnarray*}
		\epsilon_a(k) &=& s_H(a)^{\dagger} \circ_R k \circ_R s_H(a)\\
		&=& (s_H(a)^{\dagger} \circ_R k)~ \phi_{R(s_H(a))^{-1} R(k)}(s_H(a)), \quad \textrm{by definition of $\circ_R$}\\
		&=& \phi_{R(s_H(a))^{-1}} (s_H(a)^{-1})~ \phi_{R(s_H(a))^{-1}}(k) ~\phi_{R(s_H(a))^{-1}} \phi_{ R(k)}(s_H(a)),
	\end{eqnarray*}
by definition of $\circ_R$ and the facts that $s_H(a)^{\dagger}=\phi_{R(s_H(a))^{-1}} (s_H(a)^{-1})$ and $\phi_{R(s_H(a)^{\dagger})}=\phi_{R(s_H(a))^{-1}}$. Now using the facts that $R(s_H(a))^{-1}= \chi(a)^{-1} s_G(T(a))^{-1}$, $\phi_{ R(k)}(s_H(a))=s_H(a)f(R(k), a)$ and \eqref{sigmaact},  we have 
	\begin{eqnarray*}
		\epsilon_a(k) &=& \phi_{\chi(a)^{-1} s_G(T(a))^{-1}} (s_H(a))^{-1} \;  \nu_{T(a)^{-1}}(k) \; \phi_{\chi(a)^{-1} s_G(T(a))^{-1}}(s_H(a) f(R(k), a))\notag \\
		&=&  \phi_{s_G(T(a))^{-1} \sigma_{T(a)^{-1}}(\chi(a)^{-1})} (s_H(a))^{-1}\; \nu_{T(a)^{-1}}(k) \; \phi_{ s_G(T(a))^{-1} \sigma_{T(a)^{-1}}(\chi(a)^{-1})}(s_H(a)) \; \nu_{T(a)^{-1}}(f(R(k), a)).
	\end{eqnarray*}
	Seting $k_1:= \nu_{T(a)^{-1}}(k)$, $k_2:= \nu_{T(a)^{-1}}(f(R(k), a))$ and $l:= \sigma_{T(a)^{-1}}(\chi(a)^{-1})$,  we have
	\begin{eqnarray*}
		\epsilon_a(k) &=&  \phi_{s_G(T(a))^{-1} l} (s_H(a))^{-1} \; k_1 \; \phi_{ s_G(T(a))^{-1} l}(s_H(a)) \; k_2  \\
		&=&  \big(\phi_{s_G(T(a))^{-1}} \phi_l (s_H(a)) \big)^{-1} k_1 \; \phi_{s_G(T(a))^{-1}} \phi_l (s_H(a)) \; k_2 \\
		&=&  \big(\phi_{s_G(T(a))^{-1}} (s_H(a)f(l, a))\big)^{-1} k_1 \; \phi_{s_G(T(a))^{-1}} (s_H(a)f(l, a)) \; k_2\\
		&=& \big(\phi_{s_G(T(a))^{-1}} (s_H(a))\; \nu_{T(a)^{-1}}(f(l,a))\big)^{-1} k_1 \; \phi_{s_G(T(a))^{-1}} (s_H(a))\; \nu_{T(a)^{-1}}(f(l,a)) k_2.
	\end{eqnarray*}
	Using that $s_G(T(a))^{-1}=s_G(T(a)^{-1}) \tau_2(T(a), T(a)^{-1})^{-1}$, we have
	\begin{eqnarray*}
		\epsilon_a(k) &=& \big(\phi_{s_G(T(a)^{-1})  \tau_2(T(a), T(a)^{-1})^{-1} } (s_H(a))\; \nu_{T(a)^{-1}}(f(l,a))\big)^{-1} k_1 \; \phi_{s_G(T(a)^{-1})  \tau_2(T(a), T(a)^{-1})^{-1} } (s_H(a))\\
		&& \nu_{T(a)^{-1}}(f(l,a)) k_2\\
		&=& \big(s_H (\beta_{T(a)^{-1}}(a))\; \rho(a, T(a)^{-1}) \; f(\tau_2(T(a), T(a)^{-1})^{-1}, a)\; \nu_{T(a)^{-1}}(f(l,a))\big)^{-1}\; k_1 \; s_H (\beta_{T(a)^{-1}}(a))\\
		&&  \rho(a, T(a)^{-1}) \; f(\tau_2(T(a), T(a)^{-1})^{-1}, a)\; \nu_{T(a)^{-1}}(f(l,a))\; k_2\\
		&=& \nu_{T(a)^{-1}}(f(l,a))^{-1}\; f(\tau_2(T(a), T(a)^{-1})^{-1}, a)^{-1}\;\rho(a, T(a)^{-1})^{-1}\; s_H (\beta_{T(a)^{-1}}(a))^{-1}\;  k_1 \; s_H (\beta_{T(a)^{-1}}(a))\\
		&&  \rho(a, T(a)^{-1})\;  f(\tau_2(T(a), T(a)^{-1})^{-1}, a)\; \nu_{T(a)^{-1}}(f(l,a))\; k_2\\
		&=& \mu_{\beta_{T(a)^{-1}}(a)}(k_1) \; k_2.
	\end{eqnarray*}
	Using the values of $k_1$ and $k_2$, we have
	\begin{eqnarray}\label{epsilonf}
		\epsilon_a(k) &=&  \mu_{\beta_{T(a)^{-1}}(a)}(\nu_{T(a)^{-1}}(k)) \; \nu_{T(a)^{-1}}(f(R(k), a)).
	\end{eqnarray}
The preceding computations show that $\epsilon$ is completely determined by $\nu, \mu$ and  $f$. Hence, we conclude that the associated action of the induced skew left brace extension is completely determined by the action of the given relative Rota-Baxter group. This together with Proposition \ref{eqactprop} gives the following corollary.

\begin{cor}\label{RRB2sbext}
	Let $\mathcal{A}= (A,B, \beta, T)$ be a relative Rota-Baxter group and $\mathcal{K}=  (K,L,\alpha,S )$ a trivial relative Rota-Baxter group such that $K$ and $L$ are abelian groups. Let  
	$$\mathcal{E} : \quad {\bf 1} \longrightarrow (K,L,\alpha,S ) \stackrel{(i_1, i_2)}{\longrightarrow}  (H,G, \phi, R) \stackrel{(\pi_1, \pi_2)}{\longrightarrow} (A,B, \beta, T) \longrightarrow {\bf 1}$$
	be an extension of relative Rota-Baxter groups with action $(\nu, \mu, \sigma, f)$ and 
	$$\mathcal{E}_{SB}: \quad {\bf 1} \longrightarrow K_{S} \stackrel{i_1}{\longrightarrow}  H_{R} \stackrel{\pi_1}{\longrightarrow} A_{T} \longrightarrow {\bf 1}$$
	the induced skew left brace extension with associated action $(\xi, \zeta, \epsilon)$.  Then there is a map $\Pi: \Ext_{(\nu, \mu, \sigma, f)}(\mathcal{A}, \mathcal{K}) \to \Ext_{(\xi, \zeta, \epsilon)}(A_T, K_S)$ given by  $\Pi\big([\mathcal{E}] \big)= [\mathcal{E}_{SB}]$.
\end{cor}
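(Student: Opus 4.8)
The plan is to show that $\Pi$ is a well-defined map between the two indicated sets of equivalence classes, which reduces to two checks: that the induced extension $\mathcal{E}_{SB}$ genuinely lies in the prescribed component $\Ext_{(\xi,\zeta,\epsilon)}(A_T, K_S)$, and that the assignment $[\mathcal{E}] \mapsto [\mathcal{E}_{SB}]$ is independent of the chosen representative.

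For the first check, I would appeal directly to the computations carried out immediately before the statement, which evaluate the associated action of $\mathcal{E}_{SB}$ in terms of the data of $\mathcal{E}$: namely $\zeta_a = \mu_a$, $\xi_a(k) = \nu_{T(a)}(k)$, and the formula \eqref{epsilonf} for $\epsilon_a(k)$. The crucial feature is that each of these is expressed solely through $\nu$, $\mu$, $\sigma$ and $f$, so the triplet $(\xi, \zeta, \epsilon)$ is completely determined by the relative Rota-Baxter action $(\nu, \mu, \sigma, f)$. Consequently every extension belonging to $\Ext_{(\nu,\mu,\sigma,f)}(\mathcal{A},\mathcal{K})$ induces, as in Remark \ref{extsb}, a skew left brace extension whose associated action is exactly this fixed $(\xi, \zeta, \epsilon)$, placing $\mathcal{E}_{SB}$ in $\Ext_{(\xi,\zeta,\epsilon)}(A_T, K_S)$ as required.

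For the second check, suppose $\mathcal{E}$ and $\mathcal{E}'$ are equivalent extensions of relative Rota-Baxter groups, with connecting isomorphism $(\eta, \vartheta): (H,G,\phi,R) \to (H',G',\phi',R')$ fitting into the commutative diagram \eqref{eqact}. By Proposition \ref{rrb to slb homo}, $\eta$ induces an isomorphism $H_R \to H'_{R'}$ of the induced skew left braces. Restricting the two outer vertical arrows of the diagram \eqref{eqact} to their first coordinates, where they are the identity on $K$ and on $A$, and passing to induced skew braces shows that $\eta$ intertwines the maps $i_1, \pi_1$ with $i_1', \pi_1'$; hence the induced skew brace diagram commutes and $\mathcal{E}_{SB}$ is equivalent to $\mathcal{E}'_{SB}$. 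Together with Proposition \ref{eqactprop}, which guarantees that equivalent relative Rota-Baxter extensions share the same $(\nu,\mu,\sigma,f)$ and so land in a single component of the target, this establishes that $\Pi$ is well-defined.

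The main obstacle in principle is the identification of $\epsilon$ with an expression in $\nu$, $\mu$ and $f$; but this lengthy computation has already been completed above in deriving \eqref{epsilonf}, so only the formal bookkeeping of descending the equivalence diagram to induced skew braces remains, and that is immediate from the functoriality recorded in Proposition \ref{rrb to slb homo}.
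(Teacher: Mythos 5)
Your proposal is correct and follows essentially the same route as the paper: the paper's own justification is precisely the preceding computations showing that $(\xi,\zeta,\epsilon)$ is determined by $(\nu,\mu,\sigma,f)$ (via $\zeta_a=\mu_a$, $\xi_a=\nu_{T(a)}$, and \eqref{epsilonf}), combined with Proposition \ref{eqactprop} and the functoriality of Proposition \ref{rrb to slb homo} already invoked in Remark \ref{extsb}. You merely spell out the two well-definedness checks more explicitly than the paper does, which is a faithful elaboration rather than a different argument.
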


For bijective relative Rota-Baxter groups, we can work in the reverse direction.

\begin{prop}\label{actnprop}
Let  $\mathcal{A}=(A, B, \beta, T)$ be a  relative Rota-Baxter group and $\mathcal{K}=(K, L, \alpha, S)$ a trivial relative Rota-Baxter group such that $T$ and $S$ are bijections and $K$ and $L$ are abelian.
 Let
\begin{equation}\label{extension of SLB for action}
\mathcal{E}_{SB}: \quad {\bf 1} \longrightarrow K_{S} \stackrel{i_1}{\longrightarrow}  E \stackrel{\pi_1}{\longrightarrow} A_{T} \longrightarrow {\bf 1}
\end{equation}
be an extension of skew left braces with associated action $(\xi, \zeta, \epsilon)$. If $\mathcal{E}_{SB}$ is induced by the extension 
\begin{equation}\label{extension of RRB for action}
\mathcal{E} : \quad {\bf 1} \longrightarrow (K,L,\alpha,S ) \stackrel{(i_1, i_2)}{\longrightarrow}  (E,G, \phi, R) \stackrel{(\pi_1, \pi_2)}{\longrightarrow} (A,B, \beta, T) \longrightarrow {\bf 1}
\end{equation}
of relative  Rota-Baxter groups, then the action $(\nu, \mu, \sigma, f)$ associated to $\mathcal{E}$ is given by 
\begin{eqnarray}
\nu_a  &=& \xi_{T^{-1}(a)}, \label{RRB2sbactn2}\\
\mu_a &=& \zeta_a, \label{RRB2sbactn1}\\
\sigma_b &=& S^{-1} \; \epsilon_{T^{-1}(b)} \; S, \label{RRB2sbactn3}\\
f(l,a) &=&  \zeta_a(S^{-1}(l^{-1})) \; \xi_a(\epsilon(S^{-1}(l))), \label{RRB2sbactn4}
\end{eqnarray}
for $a \in A$, $b \in B$ and $l \in L$.
\end{prop}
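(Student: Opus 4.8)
The plan is to invert the three relations expressing $(\xi,\zeta,\epsilon)$ through $(\nu,\mu,\sigma,f)$ that were established in the discussion preceding Corollary \ref{RRB2sbext}, using the bijectivity of $T$ and $S$. First I would fix a single compatible set-theoretic section $(s_H,s_G)$ to $\mathcal{E}$; since the underlying sets of the induced skew left braces $A_T$ and $E_R$ are $A$ and $E$, the group section $s_H\colon A\to E$ simultaneously serves as a section to $\mathcal{E}_{SB}$. As both action data are independent of the chosen section (Proposition \ref{construction of actions}, Lemma \ref{properties of f}, and \cite[Proposition 3.4]{NMY1}), the earlier computations apply verbatim and give $\zeta_a=\mu_a$, $\xi_a=\nu_{T(a)}$, together with the formula \eqref{epsilonf} for $\epsilon_a$. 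The relations for $\mu$ and $\nu$ are then immediate: $\mu_a=\zeta_a$ is just $\zeta_a=\mu_a$, while substituting $a=T^{-1}(b)$ in $\xi_a=\nu_{T(a)}$, which is legitimate precisely because $T$ is a bijection, yields $\nu_b=\xi_{T^{-1}(b)}$, establishing \eqref{RRB2sbactn2} and \eqref{RRB2sbactn1}.

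The substance of the proposition is therefore the recovery of $f$ and $\sigma$, for which I would invert \eqref{epsilonf}. Recalling that $R$ restricted to $K$ equals $S$, so that $f(R(k),a)=f(S(k),a)$, I would set $k=S^{-1}(l)$ (using that $S$ is bijective) in \eqref{epsilonf} and apply $\nu_{T(a)}$ to both sides. The key simplification is the compatibility $\nu_b(\mu_{a'}(k))=\mu_{\beta_b(a')}(\nu_b(k))$ of Definition \ref{moddefn}(5): taking $b=T(a)$, $a'=\beta_{T(a)^{-1}}(a)$, and using that $\beta_{T(a)}\beta_{T(a)^{-1}}=\beta_1=\Id$ since $\beta$ is a homomorphism, the index collapses to
\[
\nu_{T(a)}\big(\mu_{\beta_{T(a)^{-1}}(a)}\big(\nu_{T(a)^{-1}}(S^{-1}(l))\big)\big)=\mu_a(S^{-1}(l)).
\]
Solving the resulting identity for $f(l,a)$ gives $f(l,a)=\mu_a(S^{-1}(l))^{-1}\,\nu_{T(a)}(\epsilon_a(S^{-1}(l)))$; rewriting $\mu_a(S^{-1}(l))^{-1}=\mu_a(S^{-1}(l^{-1}))$ (valid because $S^{-1}$ and $\mu_a$ are homomorphisms and $K$ is abelian) and substituting $\mu_a=\zeta_a$, $\nu_{T(a)}=\xi_a$ produces exactly \eqref{RRB2sbactn4}.

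For $\sigma$ I would instead read the module axiom Definition \ref{moddefn}(4), namely $S\big(\nu^{-1}_{T(a)}(\mu_a(k))\,\nu^{-1}_{T(a)}(f(S(k),a))\big)=\sigma_{T(a)}(S(k))$, with $l=S(k)$. The same application of Definition \ref{moddefn}(5) gives $\nu^{-1}_{T(a)}(\mu_a(S^{-1}(l)))=\mu_{\beta_{T(a)^{-1}}(a)}(\nu_{T(a)^{-1}}(S^{-1}(l)))$, which is precisely the first factor occurring in \eqref{epsilonf}, while $\nu^{-1}_{T(a)}(f(l,a))$ is its second factor; hence the argument of $S$ equals $\epsilon_a(S^{-1}(l))$, and one obtains $\sigma_{T(a)}(l)=S(\epsilon_a(S^{-1}(l)))$, which on replacing $a$ by $T^{-1}(b)$ is \eqref{RRB2sbactn3}. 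The main obstacle is purely the bookkeeping: one must carry the commuting automorphisms $\nu,\mu$ and the operator $S$ through \eqref{epsilonf} and recognise the two cancellations driven by Definition \ref{moddefn}(5) and by $\beta$ being a homomorphism. Abelianness of $K$ and $L$ is used freely to rearrange factors, and the bijectivity of $T$ and $S$ is exactly what licenses forming $T^{-1}$ and $S^{-1}$ in the final formulas.
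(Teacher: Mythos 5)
Your proposal is correct, and for \eqref{RRB2sbactn2}, \eqref{RRB2sbactn1} and \eqref{RRB2sbactn4} it coincides with the paper's own argument: the paper likewise fixes a section, invokes the earlier computations $\zeta_a=\mu_a$, $\xi_a=\nu_{T(a)}$ and \eqref{epsilonf}, and recovers $f$ by solving \eqref{epsilonf} for $f(l,a)$ with $k=S^{-1}(l)$, using exactly the collapse $\nu_{T(a)}\big(\mu_{\beta_{T(a)^{-1}}(a)}(\nu_{T(a)^{-1}}(k))\big)=\mu_a(k)$ coming from \eqref{mcon}. Where you genuinely diverge is $\sigma$. The paper derives the key identity $S\,\epsilon_a=\sigma_{T(a)}\,S$ by a fresh computation with sections: it applies the circle-group homomorphism $R$ to $\epsilon_a(k)=s_H(a)^{\dagger}\circ_R k\circ_R s_H(a)$, then uses $R|_K=S$, \eqref{RRB2}, \eqref{sigmaact} and the abelianness of $L$. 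You instead obtain the same identity section-freely, by recognising that the argument of $S$ in the module axiom of Definition \ref{moddefn}(4) (i.e.\ \eqref{finalactrel}) is, after the same use of axiom (5), precisely the right-hand side of \eqref{epsilonf}, so that $S(\epsilon_a(k))=\sigma_{T(a)}(S(k))$ follows by substitution. Both routes are legitimate, since \eqref{finalactrel} and \eqref{epsilonf} are established before the proposition; your version is a purely formal deduction from already-proved identities (and makes the logical dependence on the module axioms explicit), while the paper's is a short direct computation that does not need \eqref{epsilonf} for $\sigma$ at all.

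One caveat, which you share with the paper: what both derivations actually give is $\sigma_{T(a)}=S\,\epsilon_a\,S^{-1}$, i.e.\ $\sigma_b=S\,\epsilon_{T^{-1}(b)}\,S^{-1}$, whereas \eqref{RRB2sbactn3} as printed reads $\sigma_b=S^{-1}\,\epsilon_{T^{-1}(b)}\,S$. The paper's proof exhibits the same mismatch between its displayed identity $S\,\epsilon_a=\sigma_{T(a)}\,S$ and the formula it then asserts, so this is an inconsistency in the statement rather than a gap in your argument; still, you should not write that your identity ``is'' \eqref{RRB2sbactn3} without flagging that the conjugation appears with $S$ and $S^{-1}$ interchanged.
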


\begin{proof}
Let $(s_H, s_G)$ be a set-theoretic section to $\mathcal{E}$. Then  $\epsilon_a(k) = s_H(a)^{\dagger} \circ_R k \circ_R s_H(a)$ for $a \in A$ and $k \in K$. Applying $R$ on both the sides, and using the facts that $R|_K=S$ and $R: H^{(\circ_R)} \to G$ is a homomorphism, we obtain
	\begin{eqnarray*}
		S(\epsilon_a(k))&=&  R((s_H(a))^{-1} \,S(k)\, R((s_H(a))\\
		&=&s_G(T(a))^{-1} \,S(k)\,s_G(T(a)),  \quad \textrm{by}~ \eqref{RRB2}~\textrm{and the fact that $L$ is abelian}~\\
		&=& \sigma_{T(a)} (S(k)), \quad \textrm{by}~ \eqref{sigmaact}.
	\end{eqnarray*}
Thus, $S\,  \epsilon_a= \sigma_{T(a)} \, S$ for all $ a \in A$. If $(\xi, \zeta, \epsilon)$ is the  associated action of the skew left brace extension \eqref{extension of SLB for action} of $A_T$ by $K_S$, then the action $(\nu, \mu, \sigma, f)$ of the corresponding relative Rota-Baxter extension \eqref{extension of RRB for action} is given by 
$$ \mu_a = \zeta_a, \quad \nu_a  = \xi_{T^{-1}(a)} \quad \textrm{and} \quad  \sigma_b = S^{-1} \; \epsilon_{T^{-1}(b)} \; S,$$
for all $a \in A$ and $b \in B$. Calculating $f$ separately, we see from \eqref{epsilonf} that
\begin{eqnarray*}
	f(l,a) &=&  \nu_{T(a)}\big( \mu_{\beta_{T(a)^{-1}}(a)}(\nu_{T(a)^{-1}}(S^{-1}(l)))^{-1} \; \epsilon_a(S^{-1}(l))\big) \\
	&=& \nu_{T(a)}\big( \mu_{\beta_{T(a)^{-1}}(a)}(\nu_{T(a)^{-1}}(S^{-1}(l)))^{-1}\big) ~\nu_{T(a)}(\epsilon_a(S^{-1}(l)))\notag\\
	&=& \mu_{a}(S^{-1}(l^{-1})) \;	\nu_{T(a)}(\epsilon_a(S^{-1}(l))), \quad \textrm{using}~ \eqref{mcon}    \notag\\
	&=& \zeta_a(S^{-1}(l^{-1})) \; \xi_a(\epsilon(S^{-1}(l))),\notag
\end{eqnarray*}
for $l \in L$ and $a \in A$. 
\end{proof}
\medskip

Next, we find a relationship between $\Ho^2_{RRB}(\mathcal{A}, \mathcal{K})$ and $\Ho^2_N(A_R, K_S)$.  Define $\tilde{\tau}: A \times A \to K$ by $\tilde{\tau}(a_1, a_2) = s_H(a_1 \circ _T a_2)^{-1}(s_H(a_1) \circ_ R s_H(a_2))$. Then we have
\begin{eqnarray*}
	\tilde{\tau}(a_1, a_2) &=& s_H(a_1 \circ _T a_2)^{-1}(s_H(a_1) \circ_ R s_H(a_2))\\
	&=& s_H(a_1 \beta_{T(a_1)}(a_2))^{-1} (s_H(a_1) \phi_{R(s_H(a_1))}(s_H(a_2)))\\
	&=& s_H(a_1 \beta_{T(a_1)}(a_2))^{-1} (   s_H(a_1) \phi_{s_G(T(a_1)) \chi(a_1)}  (s_H(a_2))       )\\
	&=& s_H(a_1 \beta_{T(a_1)}(a_2))^{-1} (s_H(a_1)  \phi_{s_G(T(a_1))}(s_H(a_2) f(\chi(a_1), a_2 )  )  ), \quad \textrm{by}~ \eqref{fdefn} \\
	&=& s_H(a_1 \beta_{T(a_1)}(a_2))^{-1} (s_H(a_1) s_H(\beta_{T(a_1)}(a_2)) \rho(T(a_1), a_2) \nu_{T(a_1)}( f(\chi(a_1), a_2 ))), \quad \textrm{by}~  \eqref{atilde}\\
	&=& \tau_1(a_1, \beta_{T(a_1)}(a_2)) \rho(T(a_1), a_2) \nu_{T(a_1)}( f(\chi(a_1), a_2) )), \quad \textrm{by}~  \eqref{mucocycle}.
\end{eqnarray*}

Let $\Psi:  \Ho^2_{RRB}(\mathcal{A}, \mathcal{K}) \longrightarrow \Ext_{(\nu, \mu, \sigma, f)}(\mathcal{A}, \mathcal{K})$  and $\Lambda:\Ext_{(\xi, \zeta, \epsilon)}(M, I) \longrightarrow \Ho^2_N(M, I)$ be the bijections defined in  theorems \ref{ext and cohom bijection} and \ref{gbij-thm sb}, respectively. If $\Pi: \Ext_{(\nu, \mu, \sigma, f)}(\mathcal{A}, \mathcal{K}) \longrightarrow \Ext_{(\xi, \zeta, \epsilon)}(A_T, K_S)$ is the map defined in Corollary \ref{RRB2sbext}, then we have the map $\Lambda \Pi \Psi: \Ho^2_{RRB}(\mathcal{A}, \mathcal{K}) \to \Ho^2_N(A_T, K_S)$. In fact, the map $\Lambda \Pi \Psi$ is explicitly given by $\Lambda \Pi \Psi\big( [(\tau_1, \tau_2, \rho, \chi)]\big)= [\tau_1, \tau^{(\beta, T)}_1\rho^{T}\chi^{(T, f)} ]$, where 
\begin{eqnarray*}
	\tau^{(\beta, T)}_1(a_1, a_2) &=&  \tau_1(a_1, \beta_{T(a_1)}(a_2)),\\
	\rho^{T}(a_1, a_2) &=& \rho(T(a_1), a_2),\\
	\chi^{(T, f)}(a_1, a_2) & =& \nu_{T(a_1)}( f(\chi(a_1), a_2)),
\end{eqnarray*}
for all $a_1, a_2 \in A$. In fact, we have the following result. 

\begin{prop}\label{cohom RRB to cohom skew brace}
	Let $\mathcal{A}= (A,B, \beta, T)$ be a relative Rota-Baxter group and $\mathcal{K}=  (K,L,\alpha,S )$ a module over $\mathcal{A}$ with respect to the action $(\nu, \mu,\sigma, f)$. Then the map $\Lambda \Pi \Psi: \Ho^2_{RRB}(\mathcal{A}, \mathcal{K}) \to \Ho^2_N(A_R, K_S)$ is a homomorphism of groups.
\end{prop}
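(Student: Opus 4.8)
The plan is to exploit the explicit description of $\Lambda \Pi \Psi$ derived just above the statement, namely
$$\Lambda \Pi \Psi\big([(\tau_1, \tau_2, \rho, \chi)]\big)= \big[\,\tau_1,~ \tau^{(\beta, T)}_1\,\rho^{T}\,\chi^{(T, f)}\,\big],$$
and to check directly that this assignment respects the group operations. Both $\Ho^2_{RRB}(\mathcal{A}, \mathcal{K})$ and $\Ho^2_N(A_T, K_S)$ inherit their group structure from componentwise multiplication of cocycles, because the coefficient groups $K$ and $L$ are abelian; in particular, the product of the classes $[(\tau_1, \tau_2, \rho, \chi)]$ and $[(\tau_1', \tau_2', \rho', \chi')]$ in $\Ho^2_{RRB}(\mathcal{A}, \mathcal{K})$ is represented by the quadruple $(\tau_1\tau_1', \tau_2\tau_2', \rho\rho', \chi\chi')$. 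Since the first component of $\Lambda\Pi\Psi$ is just $\tau_1$, which is trivially multiplicative, it suffices to prove that the second component $\tau^{(\beta, T)}_1\,\rho^{T}\,\chi^{(T, f)}$ is multiplicative as a function of the quadruple.

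First I would treat the three factors of the second component separately. The assignment $\tau_1 \mapsto \tau_1^{(\beta, T)}$, given by $\tau_1^{(\beta,T)}(a_1,a_2)=\tau_1(a_1, \beta_{T(a_1)}(a_2))$, is a precomposition and is therefore multiplicative in $\tau_1$; likewise $\rho \mapsto \rho^{T}$, with $\rho^{T}(a_1,a_2)=\rho(T(a_1),a_2)$, is multiplicative in $\rho$. The only factor requiring an argument is $\chi \mapsto \chi^{(T,f)}$, where $\chi^{(T,f)}(a_1, a_2)=\nu_{T(a_1)}(f(\chi(a_1), a_2))$. Here I would invoke the module axiom that $f(-, a_2)\colon L \to K$ is a homomorphism (Definition \ref{moddefn}(3)) together with the fact that each $\nu_{T(a_1)}$ is an automorphism of $K$, to obtain
$$(\chi\chi')^{(T,f)}(a_1,a_2)=\nu_{T(a_1)}\big(f(\chi(a_1)\chi'(a_1), a_2)\big)=\nu_{T(a_1)}\big(f(\chi(a_1),a_2)\big)\,\nu_{T(a_1)}\big(f(\chi'(a_1),a_2)\big),$$
so that $\chi \mapsto \chi^{(T,f)}$ is multiplicative as well.

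Combining the three factors and using that $K$ is abelian to reorder the resulting product, the second component of $\Lambda\Pi\Psi$ applied to the product cocycle $(\tau_1\tau_1', \tau_2\tau_2', \rho\rho', \chi\chi')$ equals
$$\big(\tau_1\tau_1'\big)^{(\beta,T)}\,\big(\rho\rho'\big)^{T}\,\big(\chi\chi'\big)^{(T,f)} = \big(\tau_1^{(\beta,T)}\rho^{T}\chi^{(T,f)}\big)\,\big((\tau_1')^{(\beta,T)}(\rho')^{T}(\chi')^{(T,f)}\big),$$
which is precisely the product of the second components of $\Lambda\Pi\Psi([(\tau_1,\tau_2,\rho,\chi)])$ and $\Lambda\Pi\Psi([(\tau_1',\tau_2',\rho',\chi')])$. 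This yields the homomorphism identity. The one genuinely nontrivial input is the homomorphism property of $f$ in its first variable; everything else is the formal observation that the two cohomology groups carry componentwise-abelian structures and that the three transformations defining the second component are multiplicative in the appropriate arguments. I would not re-verify well-definedness on cohomology classes, as it is already guaranteed by the established well-definedness of $\Psi$, $\Pi$ and $\Lambda$.
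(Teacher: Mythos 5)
Your proposal is correct and follows essentially the same route as the paper's own proof: both verify the homomorphism property componentwise from the explicit formula $\Lambda \Pi \Psi\big([(\tau_1, \tau_2, \rho, \chi)]\big)= [(\tau_1,~ \tau^{(\beta, T)}_1\rho^{T}\chi^{(T, f)})]$, with the only nontrivial input being the linearity of $f$ in its first coordinate (the paper states exactly this), the other two factors being multiplicative because they are mere precompositions. Your additional remarks on $\nu_{T(a_1)}$ being an automorphism and on reordering via commutativity of $K$ only make explicit what the paper leaves as ``easy to see.''
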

\begin{proof}
	Let $[(\tau_1, \tau_2, \rho, \chi)]$ and $[(\tau^\prime_1, \tau^\prime_2, \rho^\prime, \chi^\prime)]$   be elements in  $\Ho^2_{RRB}(\mathcal{A}, \mathcal{K})$. Then 
	\begin{eqnarray*}
		\Lambda \Pi \Psi( [(\tau_1, \tau_2, \rho, \chi)] \, [(\tau^\prime_1, \tau^\prime_2, \rho^\prime, \chi^\prime)] ) &=& \Lambda \Pi \Psi( [(\tau_1\tau^\prime_1, \tau_2\tau^\prime_2, \rho \rho^\prime, \chi\chi^\prime)])\\
		&=& [(\tau_1 \tau^\prime_1,  (\tau_1 \tau^\prime_1)^{(\beta, T)}  (\rho_1\rho^\prime_1)^T (\chi\chi^\prime)^{(T,f)})].
	\end{eqnarray*}
	It is easy to see that 
	\begin{eqnarray*}
		(\tau_1\tau^\prime_1)^{(\beta, T)}&=&\tau^{(\beta, T)}_1 (\tau^\prime_1)^{(\beta, T)},\\
		(\rho_1\rho^\prime_1)^T &=& \rho^T_1  (\rho^\prime_1)^T,\\
		(\chi\chi^\prime)^{(T,f)} & = & \chi^{(T, f)}  (\chi^\prime)^{(T, f)}, \quad \textrm{by linearity of $f$ in the first coordinate.}
	\end{eqnarray*}
	Hence, we have 
	\begin{eqnarray*}
		\Lambda \Pi \Psi( [(\tau_1, \tau_2, \rho, \chi)] [(\tau^\prime_1, \tau^\prime_2, \rho^\prime, \chi^\prime)] ) &=&[(\tau_1,\tau^{(\beta, T)}_1,  \rho^T_1, \chi^{(T, f)})] \,[(\tau^\prime_1, (\tau^\prime_1)^{(\beta, T)}, (\rho^\prime_1)^T,  (\chi^\prime)^{(T, f)}) ]\\
		& =& \Lambda \Pi \Psi( [(\tau_1, \tau_2, \rho, \chi)]) \,	\Lambda \Pi \Psi([(\tau^\prime_1, \tau^\prime_2, \rho^\prime, \chi^\prime)] ),
	\end{eqnarray*}
	which shows that $\Lambda \Pi \Psi$ is a homomorphism.
\end{proof}

Let $\mathcal{A} = (A, B, \beta, T)$ be a bijective relative Rota-Baxter group.  Then we have an isomorphism
$$(\Id_A, T): (A^{(\cdot)}, A^{(\circ_T)}, \beta \, T, \Id_A) \stackrel{\cong}{\longrightarrow}   (A,B, \beta, T).$$ 
Thus, without loss of generality, we consider a bijective relative Rota-Baxter group to be of the form $(A^{(\cdot)}, A^{(\circ_T)}, \beta \, T, \Id_A)$.

\begin{prop}
Let  $\mathcal{A}=(A, B, \beta, T)$ be a  relative Rota-Baxter group and $\mathcal{K}=(K, L, \alpha, S)$ a trivial relative Rota-Baxter group such that  $T$ and $S$ are bijections and $K$ and $L$ are abelian.  Then the map  $\Pi$ defined in Corollary \ref{RRB2sbext} is a bijection.
\end{prop}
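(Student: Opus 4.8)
The plan is to construct an explicit two-sided inverse to $\Pi$ using the passage from skew left braces back to relative Rota-Baxter groups, so that injectivity and surjectivity follow together. Since $T$ and $S$ are bijections, I would first use the reduction recorded just above to take $\mathcal{A} = (A^{(\cdot)}, A^{(\circ_T)}, \beta\,T, \Id_A)$ and, as $\mathcal{K}$ is trivial with $S$ bijective, $\mathcal{K} = (K, K, \alpha, \Id_K)$ with $\alpha$ trivial; thus both operators are identity maps, $B = A^{(\circ_T)}$ and $L = K$. Write $F$ for the assignment sending a relative Rota-Baxter group to its induced skew left brace (Propositions \ref{rrb2sb} and \ref{rrb to slb homo}), which is exactly the operation underlying $\Pi$, and $G$ for the assignment sending a skew left brace $(H, \cdot, \circ)$ to $(H^{(\cdot)}, H^{(\circ)}, \lambda^H, \Id_H)$ and a brace homomorphism $\psi$ to the pair $(\psi, \psi)$ (Proposition \ref{slb to rrb homo}). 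In the chosen normal form one checks directly that $F$ and $G$ invert each other on $\mathcal{A}$ and $\mathcal{K}$: indeed $\lambda^{A_T}_a(b) = a^{-1}(a \circ_T b) = \beta_{T(a)}(b)$, so $G(F(\mathcal{A})) = \mathcal{A}$, and similarly $G(F(\mathcal{K})) = \mathcal{K}$.

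For surjectivity I would begin with a skew left brace extension $\mathcal{E}_{SB}$ of $A_T$ by $K_S$ with associated action $(\xi, \zeta, \epsilon)$ and apply $G$ to it. By Proposition \ref{skew brace extension from rb extension} this produces an extension $\mathcal{E}$ of $\mathcal{A}$ by $\mathcal{K}$ whose induced skew left brace extension $F(\mathcal{E})$ is $\mathcal{E}_{SB}$ itself, so $\Pi([\mathcal{E}]) = [\mathcal{E}_{SB}]$. It then remains only to confirm that $\mathcal{E}$ lies in the prescribed action block, that is, that its associated action equals the quadruple $(\nu, \mu, \sigma, f)$ determined by $(\xi, \zeta, \epsilon)$; this is exactly Proposition \ref{actnprop}, whose formulas \eqref{RRB2sbactn2}--\eqref{RRB2sbactn4} recover $(\nu, \mu, \sigma, f)$ from $(\xi, \zeta, \epsilon)$. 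Hence $\Pi$ maps $\Ext_{(\nu, \mu, \sigma, f)}(\mathcal{A}, \mathcal{K})$ onto $\Ext_{(\xi, \zeta, \epsilon)}(A_T, K_S)$.

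For injectivity, suppose $\Pi([\mathcal{E}_1]) = \Pi([\mathcal{E}_2])$, so that $F(\mathcal{E}_1)$ and $F(\mathcal{E}_2)$ are equivalent extensions of skew left braces. Applying $G$ to the equivalence isomorphism, and using that $G$ fixes underlying sets and carries the identity maps on $K_S$ and $A_T$ to the identity maps on $\mathcal{K}$ and $\mathcal{A}$, yields an equivalence $G(F(\mathcal{E}_1)) \sim G(F(\mathcal{E}_2))$. Finally, since the middle term $(H_i, G_i, \phi_i, R_i)$ of each $\mathcal{E}_i$ is a bijective relative Rota-Baxter group (Remark \ref{bijext}), the pair $(\Id_{H_i}, R_i^{-1})$ is an isomorphism onto $G(F(H_i, G_i, \phi_i, R_i)) = (H_i^{(\cdot)}, H_i^{(\circ_{R_i})}, \lambda^{H_i}, \Id_{H_i})$; the intertwining relations $R_i\, i_1 = i_2$ and $\pi_2\, R_i = \pi_1$ for the inclusion and projection of $\mathcal{E}_i$ show that it is in fact an equivalence of extensions $\mathcal{E}_i \sim G(F(\mathcal{E}_i))$. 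Chaining the three equivalences gives $\mathcal{E}_1 \sim \mathcal{E}_2$, i.e. $[\mathcal{E}_1] = [\mathcal{E}_2]$.

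I expect the main obstacle to be the careful verification that $G$ transports an equivalence of skew left brace extensions to a genuine equivalence of relative Rota-Baxter group extensions --- that the induced middle isomorphism commutes with both the inclusion $(i_1, i_2)$ and the projection $(\pi_1, \pi_2)$ while restricting to the identity on $\mathcal{K}$ and inducing the identity on $\mathcal{A}$ --- together with confirming that the action assignment of Proposition \ref{actnprop} is precisely inverse to the one produced by Corollary \ref{RRB2sbext}, so that $\Pi$ respects the decomposition into action blocks. Once these compatibilities are established, the surjectivity and injectivity arguments above complete the proof.
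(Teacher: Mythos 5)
Your proposal is correct and follows essentially the same route as the paper: the same normal-form reduction, surjectivity by applying the brace-to-relative-Rota-Baxter construction $(H,\cdot,\circ)\mapsto(H^{(\cdot)},H^{(\circ)},\lambda^H,\Id_H)$ together with Proposition \ref{actnprop}, and injectivity by transporting the skew-brace equivalence $\eta$ to the acting groups via bijectivity of the operators (Remark \ref{bijext}). Your chained composite $(\Id,R_2)\circ(\eta,\eta)\circ(\Id,R_1^{-1})$ is precisely the paper's isomorphism $(\eta,\,R'\eta R^{-1})$, so the functorial packaging is only a cosmetic difference.
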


\begin{proof} 
Since $\mathcal{A}$ and $\mathcal{K}$ are bijective  relative Rota-Baxter groups, in view of the preceding remark, we can take $\mathcal{A} =(A^{(\cdot)}, A^{(\circ_T)}, \beta \, T, \Id_A)$ and $\mathcal{K} = (K, K, \alpha, \Id_K)$. Further, the relationship between the actions  $(\nu, \mu, \sigma, f)$ and $(\xi, \zeta, \epsilon)$ is given by  \eqref{RRB2sbactn1},\eqref{RRB2sbactn2}, \eqref{RRB2sbactn3} and \eqref{RRB2sbactn4}. We first show that the map $\Pi$ is injective. Let $$\mathcal{E} : \quad {\bf 1} \longrightarrow (K,K,\alpha,\Id_K) \stackrel{(i_1, i_2)}{\longrightarrow}  (H,G, \phi, R) \stackrel{(\pi_1, \pi_2)}{\longrightarrow} (A^{(\cdot)},A^{(\circ_T)}, \beta\;T, \Id_A) \longrightarrow {\bf 1},$$ 
		$$\mathcal{E}^\prime : \quad {\bf 1} \longrightarrow (K,K,\alpha,\Id_K) \stackrel{(i^\prime_1, i^\prime_2)}{\longrightarrow}  (H^\prime,G^\prime, \phi^\prime, R^\prime
		) \stackrel{(\pi^\prime_1, \pi^\prime_2)}{\longrightarrow}(A^{(\cdot)},A^{(\circ_T)}, \beta\;T, \Id_A) \longrightarrow {\bf 1}$$ be two  extensions of $\mathcal{K}$ by $\mathcal{A}$ such that  the skew left brace extensions  $\mathcal{E}_{SB}$ and  $\mathcal{E}^\prime_{SB}$ are equivalent. Then we have  an isomorphism $\eta: H_R \longrightarrow H^\prime_{R^\prime}$ of skew left braces such that the following diagram commutes
		$$\begin{CD}
			\mathcal{E}_{SB}:\quad  {\bf 1} @>>> K_S @>{i_1}>>H_R  @>{{\pi_1} }>> A_T  @>>> {\bf 1} \\
			&&  @V{\Id_{K}}VV @V{\eta}VV @VV{{\Id_{A}}}V \\
			\mathcal{E}^\prime_{SB}: \quad {\bf 1} @>>> K_S @>{i^\prime_1}>> H^\prime_{R^\prime} @>{{\pi_1^\prime} }>> A_T @>>> {\bf 1} .
		\end{CD}$$
Since $\eta$ is an isomorphism of skew left braces,  we have $\eta \; \phi_{R(h)}=\phi^\prime_{R^\prime (\eta(h))} \; \eta$ for all $h \in H$. In view of Remark \ref{bijext}, both  $R$ and $R^\prime$ are bijections. Thus, we have an isomorphism
		$$(\eta, R^\prime \eta R^{-1} ) :  (H,G, \phi, R) \longrightarrow  (H^\prime,G^\prime, \phi^\prime, R^\prime )$$
of relative Rota-Baxter groups.  Further, the following diagram commutes 
		\begin{align}
			\begin{CD}\label{eqact1}
				{\bf 1} @>>> (K,K,\alpha,\Id_K) @>(i_1, i_2)>> (H,G, \phi, R) @>{{(\pi_1, \pi_2)} }>> (A^{(\cdot)},A^{(\circ_T)}, \beta \; T, \Id_A) @>>> {\bf 1}\\ 
				&& @V{(\Id_K, ~\Id_K)} VV@V{(\eta, R^\prime \eta R^{-1})} VV @V{(\Id_A, ~\Id_A)}VV \\
				{\bf 1} @>>> (K,K,\alpha,\Id_K) @>(i^\prime_1, i^\prime_2)>>(H^\prime,G^\prime, \phi^\prime, R^\prime) @>{(\pi^\prime_1, \pi^\prime_2) }>> (A^{(\cdot)},A^{(\circ_T)}, \beta \; T, \Id_A) @>>> {\bf 1},
			\end{CD}
		\end{align}
which implies that $\mathcal{E}$ and $\mathcal{E}'$ are equivalent. Hence, the map $\Pi$ is injective. To see that $\Pi$ is surjective, let $$\mathcal{E}: \quad {\bf 1} \longrightarrow K_S \stackrel{i}{\longrightarrow}  H \stackrel{\pi}{\longrightarrow} A_T \longrightarrow {\bf 1}$$
be an extension of skew left braces representing an element in $ \Ext_{(\xi, \zeta, \epsilon)}(A_T, K_S)$. Consider the extension 
$$\mathcal{E}_{RRB} : \quad  {\bf 1} \longrightarrow (K, K, \alpha , \Id_K) \stackrel{(i, i)}{\longrightarrow}  (H^{(\cdot)}, H^{(\circ)}, \lambda^H, \Id_H) \stackrel{(\pi, \pi)}{\longrightarrow} (A^{(\cdot)}, A^{(\circ_T)}, \beta \, T, \Id_A) \longrightarrow {\bf 1}$$ of relative Rota-Baxter groups. In view of Proposition \ref{actnprop}, the action corresponding to the extension $\mathcal{E}_{RRB}$ is $(\nu, \mu, \sigma, f)$, and hence $\mathcal{E}_{RRB}$ represents an element in $ \Ext_{(\nu, \mu, \sigma, f)}(\mathcal{A}, \mathcal{K})$. Since the skew left brace extension induced by $\mathcal{E}_{RRB}$ is identical to $\mathcal{E}$, it follows that the map $\Pi$ is surjective. 
\end{proof}

\begin{cor}\label{isomorphism RRB and SLB cohomology}
Let $\mathcal{A}= (A,B, \beta, T)$ be a relative Rota-Baxter group and $\mathcal{K}=  (K,L,\alpha,S )$ a module over $\mathcal{A}$ with respect to the action $(\nu, \mu,\sigma, f)$. If $T$ and $S$ are bijections and $K$ and $L$ are abelian, then $\Lambda \Pi \Psi: \Ho^2_{RRB}(\mathcal{A}, \mathcal{K}) \to \Ho^2_N(A_R, K_S)$ is an isomorphism of groups.
\end{cor}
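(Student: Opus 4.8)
The plan is to show that $\Lambda \Pi \Psi$ is at once a bijection and a homomorphism of groups; since a bijective group homomorphism is an isomorphism, this is exactly the assertion. Essentially all of the substantive work has already been carried out in the results preceding the corollary, so the argument amounts to assembling them in the right order and checking that the domains and codomains line up.

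First I would record the bijectivity of the three constituent maps. The map $\Psi : \Ho^2_{RRB}(\mathcal{A}, \mathcal{K}) \to \Ext_{(\nu, \mu, \sigma, f)}(\mathcal{A}, \mathcal{K})$ is a bijection by Theorem \ref{ext and cohom bijection}, being the explicit inverse of $\Phi$ constructed there, and $\Lambda : \Ext_{(\xi, \zeta, \epsilon)}(A_T, K_S) \to \Ho^2_N(A_T, K_S)$ is a bijection by Theorem \ref{gbij-thm sb}; both hold with no extra hypotheses. The middle map $\Pi$ of Corollary \ref{RRB2sbext}, which sends $[\mathcal{E}]$ to the induced skew left brace extension $[\mathcal{E}_{SB}]$, is in general only a well-defined map of sets, and it is here that the hypotheses enter: the preceding proposition shows that $\Pi$ is a bijection precisely when $T$ and $S$ are bijections and $K, L$ are abelian. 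Under the stated hypotheses, therefore, the composite $\Lambda \Pi \Psi$ is a bijection of sets.

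It then remains to note that $\Lambda \Pi \Psi$ respects the group structures on $\Ho^2_{RRB}(\mathcal{A}, \mathcal{K})$ and $\Ho^2_N(A_T, K_S)$. This is exactly the content of Proposition \ref{cohom RRB to cohom skew brace}, where the explicit formula $\Lambda \Pi \Psi([(\tau_1, \tau_2, \rho, \chi)]) = [\tau_1, \, \tau_1^{(\beta, T)} \rho^T \chi^{(T, f)}]$ together with the linearity of $f$ in its first coordinate yields additivity; crucially, that proposition is valid for an \emph{arbitrary} relative Rota-Baxter group $\mathcal{A}$, so it applies verbatim in the bijective case. Combining the two observations, $\Lambda \Pi \Psi$ is a bijective homomorphism of abelian groups, hence an isomorphism, which completes the proof.

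I do not anticipate a genuine obstacle, since the two ingredients—the bijectivity of $\Pi$ and the homomorphism property of $\Lambda \Pi \Psi$—are precisely the two results immediately preceding the corollary. The only point requiring care is bookkeeping: one must ensure that the same action data $(\nu, \mu, \sigma, f)$ and $(\xi, \zeta, \epsilon)$ are used throughout, so that the codomain of $\Psi$, the domain of $\Lambda$, and the two ends of $\Pi$ match on the nose. This compatibility is guaranteed by the relations \eqref{RRB2sbactn2}--\eqref{RRB2sbactn4} of Proposition \ref{actnprop}, which express $(\nu, \mu, \sigma, f)$ in terms of $(\xi, \zeta, \epsilon)$ using the bijectivity of $T$ and $S$.
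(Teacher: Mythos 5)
Your proposal is correct and matches the paper's (implicit) argument exactly: the corollary is obtained by combining the bijectivity of $\Psi$ (Theorem \ref{ext and cohom bijection}), of $\Lambda$ (Theorem \ref{gbij-thm sb}), and of $\Pi$ (the proposition immediately preceding the corollary, which uses the hypotheses on $T$, $S$, $K$, $L$), with the homomorphism property of $\Lambda \Pi \Psi$ from Proposition \ref{cohom RRB to cohom skew brace}. The only nitpick is that the proposition establishes bijectivity of $\Pi$ \emph{under} those hypotheses, not ``precisely when'' they hold, but this does not affect the argument.
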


\begin{remark}
	The map $\Pi$ defined in Corollary \ref{RRB2sbext} need not be surjective in general. For example, let $\mathcal{K} = \mathcal{A} = (\mathbb{Z}_p, 1, \alpha, T)$ be two relative Rota-Baxter groups, where $p$ is a prime. Then every extension of $\mathcal{K}$ by $\mathcal{A}$ has the form
	$$\mathcal{E}: \quad {\bf 1} \longrightarrow (\mathbb{Z}_p, 1, \alpha, T) \longrightarrow (E, 1, \phi, R) \longrightarrow (\mathbb{Z}_p, 1, \alpha, T) \rightarrow {\bf 1}.$$
	Hence, the skew left brace extension induced by $\mathcal{E}$ is
	$$\mathcal{E}_{SB}: \quad {\bf 1} \longrightarrow \mathbb{Z}_p \longrightarrow E \longrightarrow \mathbb{Z}_p \longrightarrow {\bf 1},$$
	with $E$ being a trivial brace. It is easy to see that the associated action of any extension of $\mathcal{K}$ by $\mathcal{A}$ is trivial. Therefore, the associated action of the skew left brace extension induced by an extension of $\mathcal{K}$ by $\mathcal{A}$ is also trivial. Let $(H, \cdot, \circ)$ be the skew left brace with the additive group as the cyclic group $\mathbb{Z}_{p^2}$, and the multiplicative group given by $$x_1 \circ x_2 = x_1 + x_1 + p x_1 x_2$$ for all $x_1, x_2 \in H$. We know from \cite[p.4]{DB18} that the annihilator of $H$ is of order  $p$. Thus, we have the extension
	$$\mathcal{E}_{1}: \quad {\bf 1} \longrightarrow \mathbb{Z}_p \longrightarrow H \longrightarrow \mathbb{Z}_p \longrightarrow {\bf 1}$$  
	with the trivial associated action. Since $H$ is a non-trivial left brace, it follows that $[\mathcal{E}_1]$ cannot belong to the image of $\Pi$. Thus, $\Pi$ is not surjective, and consequently $\Lambda \Pi \Psi$ is not surjective in general.
\end{remark}

\section{Central and split extensions of relative Rota-Baxter groups} \label{sec central and split RRB}
Let $\mathcal{A} = (A, B, \beta, T)$ be an arbitrary relative Rota-Baxter group and  $\mathcal{K} = (K, L, \alpha, S)$ a trivial relative Rota-Baxter group, where $K$ and $L$ are abelian groups.  Let $\CExt(\mathcal{A}, \mathcal{K})$ denote the set of equivalence classes of all central extensions of $\mathcal{A}$ by $\mathcal{K}$.

Consider an extension in $\CExt(\mathcal{A}, \mathcal{K})$ and let $(\mu, \nu, \sigma, f)$ be its associated action. It is a direct check that $\mu$, $\nu$ and $\sigma$ are trivial homomorphisms. Furthermore, $f(l, a) = 1$ for all $l \in L$ and $a \in A$.  In other words, the associated action is trivial for a central extension. 
\par

For central extensions, the coboundary maps can be simplified. More precisely,  we define $\partial_{CRRB}^1: C^1_{RRB} \longrightarrow C^{2}_{RRB}$ by 
$\partial_{CRRB}^1(\theta_1, \theta_2)=(\partial^1(\theta_1), \partial^1(\theta_2), \lambda_1, \lambda_2  )$, where $\lambda_1$ and $\lambda_2$ are given by
\begin{eqnarray*}
\lambda_1(a,b) &=& \theta_1(a) \,(\theta_1(\beta_b(a)))^{-1},\\
\lambda_2(a)  &=& S(\theta_1(a)) \,(\theta_2(T(a)))^{-1}.
\end{eqnarray*}
Similarly, we define $\partial_{CRRB}^2: C^2_{RRB} \longrightarrow C^{3}_{RRB}$  by  $\partial^2_{RRB}(\tau_1, \tau_2, \rho, \chi)=(\partial^2(\tau_1), \partial^2(\tau_2), \gamma_1, \gamma_2, \gamma_3)$, where
\begin{eqnarray*}
\gamma_1(a,b_1, b_2) &=& \rho(a, b_1 b_2)  \,  (\rho(\beta_{b_2}(a), b_1))^{-1} \,(\rho(a,b_2))^{-1},\\
\gamma_2(a_1, a_2,b) &=& \rho(a_1 a_2, b) \,(\rho(a_1,b))^{-1}(\rho(a_2, b))^{-1} \tau_1(a_1, a_2) \, (\tau_1(\beta_{b}(a_1), \beta_{b}(a_2)))^{-1}, \\
\gamma_3(a_1, a_2) &=&  S\big(\rho(a_2, T(a_1)) \,\tau_1(a_1,\beta_{T(a_1)}(a_2)) \big) \, (\tau_2(T(a_1), T(a_2)))^{-1} \,(\delta^1(\chi)(a_1, a_2))^{-1}.
\end{eqnarray*}

By Lemma \ref{rrb coboundary condition}, we have $\IM(\partial^1_{CRRB}) \subseteq \ker(\partial^2_{CRRB})$, and hence we can  define  $$\Ho^2_{CRRB}(\mathcal{A}, \mathcal{K})=\Ker(\partial^2_{CRRB})/  \IM(\partial^1_{CRRB}).$$

The proof of Theorem \ref{ext and cohom bijection} yields the following result.

\begin{thm}
There is a bijection between $\CExt(\mathcal{A}, \mathcal{K})$ and $\Ho^2_{CRRB}(\mathcal{A}, \mathcal{K})$.
\end{thm}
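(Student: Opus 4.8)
The plan is to deduce the statement directly from Theorem \ref{ext and cohom bijection} by making two identifications: first, that central extensions are exactly the abelian extensions whose associated action is trivial, so that $\CExt(\mathcal{A}, \mathcal{K})$ coincides with $\Ext_{(\nu, \mu, \sigma, f)}(\mathcal{A}, \mathcal{K})$ for the trivial action; and second, that for the trivial module structure the cohomology $\Ho^2_{RRB}(\mathcal{A}, \mathcal{K})$ coincides with $\Ho^2_{CRRB}(\mathcal{A}, \mathcal{K})$. Once both are in place, specializing the bijection of Theorem \ref{ext and cohom bijection} to the trivial action finishes the proof.

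For the first identification, one inclusion is the observation recorded just above the theorem: a central extension has $\mu, \nu, \sigma$ trivial and $f \equiv 1$. For the converse, suppose $\mathcal{E}$ is an abelian extension with trivial associated action and fix a section $(s_H, s_G)$. Triviality of $\sigma$ together with $L$ abelian gives $L \leq \Z(G)$, while the vanishing of $f(l,a) = s_H(a)^{-1}\phi_l(s_H(a))$ for all $a \in A$, combined with $\phi_l|_K = \alpha_l = \Id_K$ (as $\mathcal{K}$ is trivial), forces $\phi_l = \Id_H$ and hence $L \leq \Ker(\phi)$. Turning to $K$: triviality of $\mu$ with $K$ abelian gives $K \leq \Z(H)$; triviality of $\nu$ with $\alpha$ trivial gives $\phi_g(k) = k$ for all $g \in G$ and $k \in K$, so $K \leq \Fix(\phi)$; and since $R|_K = S$ takes values in $L \leq \Ker(\phi)$, we get $\phi_{R(k)} = \Id_H$, i.e. $K \leq \Ker(\phi R)$. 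Therefore $K \leq \Z^{\phi}_{R}(H)$ and $L \leq \Z(G) \cap \Ker(\phi)$, so $\mathcal{E}$ is central.

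For the second identification I would simply substitute $\nu_b = \Id$, $\mu_a = \Id$, $\sigma_b = \Id$ and $f \equiv 1$ into the coboundary maps of Section \ref{subsec cohomology RRB groups}. Under this specialization the twisted group coboundaries $\partial^n_\mu$ and $\partial^n_\sigma$ become the ordinary group coboundaries $\partial^n$, the twisted map $\delta^1_\sigma$ becomes $\delta^1$, and the formulas for $\lambda_1, \lambda_2, \gamma_1, \gamma_2, \gamma_3$ reduce term by term to precisely the expressions listed in the definitions of $\partial^1_{CRRB}$ and $\partial^2_{CRRB}$ (using that $K$ is abelian to reorder factors in $\gamma_2$). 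Consequently $\Ker(\partial^2_{RRB}) = \Ker(\partial^2_{CRRB})$ and $\IM(\partial^1_{RRB}) = \IM(\partial^1_{CRRB})$, whence $\Ho^2_{RRB}(\mathcal{A}, \mathcal{K}) = \Ho^2_{CRRB}(\mathcal{A}, \mathcal{K})$.

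Combining the two identifications, the bijection $\Ext_{(\nu, \mu, \sigma, f)}(\mathcal{A}, \mathcal{K}) \leftrightarrow \Ho^2_{RRB}(\mathcal{A}, \mathcal{K})$ of Theorem \ref{ext and cohom bijection}, taken for the trivial action, is exactly the desired bijection between $\CExt(\mathcal{A}, \mathcal{K})$ and $\Ho^2_{CRRB}(\mathcal{A}, \mathcal{K})$. The only point demanding genuine care is the equivalence of the two notions of centrality in the first step: in particular one must check that the explicit extension $\mathcal{E}(\tau_1, \tau_2, \rho, \chi)$ built by the map $\Psi$ of Theorem \ref{ext and cohom bijection} from a cocycle with trivial action actually lands in $\CExt(\mathcal{A}, \mathcal{K})$, so that $\Psi$ restricts to a well-defined map with values in the central extensions. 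This follows from applying the converse direction established above to the constructed extension, which has trivial associated action by construction.
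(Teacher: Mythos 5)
Your proof is correct, but it is organized differently from the paper's. The paper disposes of this theorem in a single line --- ``The proof of Theorem \ref{ext and cohom bijection} yields the following result'' --- that is, it asks the reader to re-run the argument of the abelian classification with the simplified coboundaries $\partial^1_{CRRB}$, $\partial^2_{CRRB}$, having only recorded the forward implication that a central extension has trivial associated action. You instead treat Theorem \ref{ext and cohom bijection} as a black box and supply the two identifications needed to specialize its \emph{statement}: (i) $\CExt(\mathcal{A},\mathcal{K})$ coincides with $\Ext_{(\nu,\mu,\sigma,f)}(\mathcal{A},\mathcal{K})$ for the trivial action, and (ii) $\Ho^2_{RRB}(\mathcal{A},\mathcal{K})$ computed with the trivial module structure equals $\Ho^2_{CRRB}(\mathcal{A},\mathcal{K})$. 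The substantive addition is the converse half of (i): an abelian extension with trivial associated action is central. Your verification of this is complete and uses exactly the right decompositions $h=s_H(a)k$ and $g=s_G(b)l$ to get $K\leq \Z(H)\cap\Fix(\phi)\cap\Ker(\phi\, R)$ and $L\leq \Z(G)\cap\Ker(\phi)$; note that the trivial action $(\nu,\mu,\sigma,f)$ does satisfy the module axioms of Definition \ref{moddefn}, so the theorem genuinely applies. This converse is precisely what the paper leaves implicit: without it one only knows that the map $\Psi$ of Theorem \ref{ext and cohom bijection} produces trivial-action abelian extensions, not that these are central, so its restriction need not a priori take values in $\CExt(\mathcal{A},\mathcal{K})$. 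Your route buys a cleaner logical dependence (a corollary of the theorem rather than of its proof) and makes explicit a point the one-line proof glosses over; the paper's route avoids stating the converse at the cost of asking the reader to retrace the entire bijection argument in the central setting.
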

\medskip

Next, we classify a specific class of extensions of relative Rota-Baxter groups, which may not be abelian in general.

\begin{defn}
Suppose that
$$\mathcal{E} : \quad {\bf 1} \longrightarrow (K,L, \alpha,S ) \stackrel{(i_1, i_2)}{\longrightarrow}  (H,G, \phi, R) \stackrel{(\pi_1, \pi_2)}{\longrightarrow} (A,B, \beta, T) \longrightarrow {\bf 1}$$ is an extension of relative Rota-Baxter groups. We say that $\mathcal{E}$ is a split-extension if there exists a set-theoretic section $(s_H, s_G)$ to  $\mathcal{E}$ which is also a morphism of relative Rota-Baxter groups.
\end{defn}

\begin{remark}
If $\mathcal{E}$ is a split extension of relative Rota-Baxter groups, then it follows that the extensions induced for both groups and skew left braces also split within their respective categories. Further, the classification of split extensions of skew left braces can be found in \cite{N1}.
\end{remark}

Suppose that $$\mathcal{E} : \quad {\bf 1} \longrightarrow (K,L, \alpha,S ) \stackrel{(i_1, i_2)}{\longrightarrow}  (H,G, \phi, R) \stackrel{(\pi_1, \pi_2)}{\longrightarrow} (A,B, \beta, T) \longrightarrow {\bf 1}$$ is a split-extension of relative Rota-Baxter groups. Then existence of a morphism $(s_H, s_G): (A, B, \beta, T) \rightarrow (H, G, \phi, R)$ implies that $s_G \; T = R \;  s_H$ and $s_H \; \beta_b = \phi_{s_G(b)} \; s_H$ for all $b \in B$. This means $\rho$ and $\chi$ defined in \eqref{atilde} and \eqref{RRB2}, respectively, are trivial maps. Moreover, by \eqref{feqn} and \eqref{relativecon}, the maps $\phi$ and $R$ are given by
\begin{eqnarray*}
\phi_{(s_G(b)l)}(s_H(a)k) &=&s_H(\beta_b(a)) ~\nu_b(f(l,a)\alpha_l(k)),\\
R(s_H(a)k) &=& s_H(T(a)) ~S(\nu^{-1}_{T(a)}(k)),
\end{eqnarray*}
for all $a \in A$, $b \in B$, $k \in K$ and $l \in L$. 
\par 
Recall that,  the group operations in the semi-direct products $A \times_{\mu} K$ and $B \ltimes_{\sigma} L$ are given by
\begin{eqnarray*}
	(a_1,k_1) (a_2, k_2) &=&(a_1 a_2, ~\mu_{a_2}(k_1)k_2),\\
	(b_1,l_1) (b_2, l_2) &= & (b_1 b_1, ~\sigma_{b_2}(l_1)l_2),
\end{eqnarray*}
for all $a_1, a_2 \in A$, $b_1, b_2, \in B$, $k_1, k_2 \in K$ and $l_1, l_2 \in L$. Routine calculations yield the following proposition.

\begin{prop}
Let $(K,L, \alpha,S )$  and  $(A,B, \beta, T)$ be two relative Rota-Baxter groups. Suppose that there exists a map $f: L \times A \rightarrow K$,  a homomorphism $\nu: B \rightarrow \Aut(K)$ and anti-homomorphisms $\mu: A \rightarrow \Aut(K)$ and $\sigma :B \rightarrow \Aut(L)$. Then the map $\phi: B \ltimes_{\sigma} L \rightarrow  \Aut(A \times_{\mu} K)$ given by $$ \phi_{(b,l)}(a,k)=\big(\beta_b(a),~ \nu_b(f(l,a)\alpha_l(k))\big),$$ for $(b, l) \in   B \ltimes_{\sigma} L$ and $(a,k) \in  A \times_{\mu} K$, is a homomorphism if and only the following conditions are satisfied:
	\begin{eqnarray}
		\nu_{b_2}\big(f(\sigma_{b_2}(l_1) l_2,a_1)  ~\alpha_{\sigma_{b_2}(l_1) l_2}(k_1)\big)& =& f(l_1, \beta_{b_2}(a_1)) ~\alpha_{l_2}\big(\nu_{b_2}(f(l_2, a_1) \alpha_{l_2}(k_1))\big),\\
		\nu_{b_1}\big(f(l_1, a_1 a_2) ~\alpha_l(\mu_{a_2}(k_1) k_2) \big) &=& \mu_{\beta_{b_1}(a_2)}\big(\nu_{b_1}(f(l,a_1) \alpha_l(k_1))\big) ~\nu_{b_1}(f(l,a_2) \alpha_l(k_2)),
	\end{eqnarray}
for all $a_1, a_2 \in A$, $b_1, b_2, \in B$, $k_1, k_2 \in K$ and $l_1, l_2 \in L$.

Furthermore, if $\phi$ is a group homomorphism, then the map $R: A \times_{\mu} K \rightarrow B \ltimes_{\sigma} L$ given by 
$$R(a, k)=(T(a), ~S(\nu^{-1}_{T(a)}(k))),$$ for $(a,k) \in  A \times_{\mu} K$, is a relative Rota-Baxter operator if and only if 
\begin{eqnarray*}
&& \sigma_{T(a_2)}(S(\nu^{-1}_{T(a_1)}(k_1))) \, S(\nu^{-1}_{T(a_2)}(k_2))\\
 &=& S\big(\nu^{-1}_{T(a_1) T(a_2)}\big(\mu_{\beta_{T(a_1)}(a_1)}(k_1) ~\nu_{T(a_1)}(f(S(\nu^{-1}_{T(a_1)}(k_1)), a_2) ~\alpha_{S(\nu^{-1}_{T(a_1)}(k_1))}(k_2)) \big) \big),
	\end{eqnarray*}
for all $a_1, a_2 \in A$ and $k_1, k_2 \in K$.
\end{prop}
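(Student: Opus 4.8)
The plan is to establish all three equivalences by direct expansion, mirroring the computations behind Propositions \ref{phiallcondn} and \ref{Rallcondn}, but now specialised to the \emph{split} situation in which the cocycles $\tau_1,\tau_2$ and the maps $\rho,\chi$ are trivial, while the action $\alpha$ of the kernel $(K,L,\alpha,S)$ is allowed to be nontrivial (so that $K$ and $L$ need not be abelian). Throughout I would use only that $\nu$ is a homomorphism, that $\mu,\sigma$ are anti-homomorphisms, that $\beta$ and $\alpha$ are homomorphisms, and the operations of $A\times_\mu K$ and $B\ltimes_\sigma L$ recorded just before the statement.

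For the first assertion I would note that $\phi\colon B\ltimes_\sigma L\to \Aut(A\times_\mu K)$ is a homomorphism precisely when two independent requirements hold: each $\phi_{(b,l)}$ is an endomorphism of $A\times_\mu K$, and the assignment $(b,l)\mapsto\phi_{(b,l)}$ is multiplicative; invertibility of each $\phi_{(b,l)}$ is then automatic, with inverse $\phi_{(b,l)^{-1}}$. For the endomorphism requirement I would fix $(b,l)$, compute $\phi_{(b,l)}\big((a_1,k_1)(a_2,k_2)\big)$ and $\phi_{(b,l)}(a_1,k_1)\,\phi_{(b,l)}(a_2,k_2)$; the first coordinates agree because $\beta_b$ is an automorphism, and equating the second coordinates yields the second displayed condition. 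For multiplicativity I would compute $\phi_{(b_1,l_1)(b_2,l_2)}(a,k)$ and $\phi_{(b_1,l_1)}\big(\phi_{(b_2,l_2)}(a,k)\big)$; the first coordinates agree since $\beta$ is a homomorphism, and, after stripping the common outer $\nu_{b_1}$ using that $\nu$ is a homomorphism, equating second coordinates gives the first displayed condition. Each manipulation is reversible, which supplies the converse.

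For the second assertion I would assume $\phi$ is a homomorphism and test the defining identity $R(x)R(y)=R\big(x\,\phi_{R(x)}(y)\big)$ on $x=(a_1,k_1)$, $y=(a_2,k_2)$. The left-hand side is simply the product of $R(a_1,k_1)$ and $R(a_2,k_2)$ in $B\ltimes_\sigma L$. For the right-hand side I would substitute $R(a_1,k_1)=\big(T(a_1),\,S(\nu^{-1}_{T(a_1)}(k_1))\big)$ into $\phi_{R(a_1,k_1)}(a_2,k_2)$, multiply the result by $(a_1,k_1)$ in $A\times_\mu K$, and apply $R$; by Proposition \ref{R homo H to G} the map $T\colon A^{(\circ_T)}\to B$ is a homomorphism, so the first coordinate collapses to $T(a_1\circ_T a_2)=T(a_1)T(a_2)$ and the whole content lies in the second coordinate. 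Equating the two second coordinates and simplifying yields the stated relation; this is the split analogue of the passage from \eqref{RRBcon} and \eqref{RRBcon2} to \eqref{Rmain} and \eqref{finalactrel}.

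The main obstacle I anticipate is the bookkeeping in this last step. The nested factors $\nu^{-1}_{T(a_1)}$, $\nu^{-1}_{T(a_2)}$ and $\nu^{-1}_{T(a_1)T(a_2)}$ must be reconciled using that $\nu$ is a homomorphism together with an intertwining relation of the type \eqref{mcon}, and one must track carefully where the kernel action $\alpha_l$ enters, since it is exactly this term that distinguishes the present possibly non-abelian split case from the abelian setting of Proposition \ref{Rallcondn}. Once these cancellations are organised, the required identity drops out and the proof is complete.
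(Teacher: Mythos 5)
Your proposal is correct and matches the paper's approach: the paper gives no written proof beyond stating that ``routine calculations yield the following proposition,'' and those routine calculations are precisely your direct expansion of the two homomorphism requirements for $\phi$ and of the identity $R(x)R(y)=R\big(x\,\phi_{R(x)}(y)\big)$ on generic elements, specialising the computations behind Propositions \ref{phiallcondn} and \ref{Rallcondn} to the split case with trivial $\rho$, $\chi$, $\tau_1$, $\tau_2$. The only remark worth making is that your anticipated obstacle in the last step does not actually arise: the displayed Rota-Baxter condition in the statement is exactly the raw equality of second coordinates (using only $T(a_1\circ_T a_2)=T(a_1)T(a_2)$ from Proposition \ref{R homo H to G}), so no further reconciliation of the nested $\nu^{-1}$ factors via a relation of type \eqref{mcon} is needed.
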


The induced relative Rota-Baxter group $(A \times_{\mu} K, B \ltimes_{\sigma} L , \phi, R)$ obtained in the preceding proposition is called the \emph{semi-direct product} of $(A,B, \beta, T)$ by $(K,L, \alpha,S )$ with respect to the action $(\nu, \mu, \sigma, f)$. The direct product of relative Rota-Baxter groups is a particular case of the semi-direct product when all the maps $\nu, \mu,\sigma, f$ are trivial.

\begin{thm}\label{semi-direct product and split extensions}
The semi-direct product of relative Rota-Baxter groups $(A, B, \beta, T)$ by $(K, L, \alpha, S)$ gives rise to a split-extension of $(A, B, \beta, T)$ by $(K, L, \alpha, S)$. Furthermore, any split-extension of relative Rota-Baxter groups is equivalent to an extension defined by the semi-direct product of relative Rota-Baxter groups.
\end{thm}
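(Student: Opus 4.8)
The statement has two halves, and the plan is to treat them separately.

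For the first half, I would exhibit the semi-direct product $(A \times_{\mu} K,\, B \ltimes_{\sigma} L,\, \phi, R)$ as the middle object of an explicit split extension. Take the inclusions $i_1(k) = (1_A, k)$, $i_2(l) = (1_B, l)$ and the projections $\pi_1(a,k) = a$, $\pi_2(b,l) = b$. The formulas for $\phi$ and $R$ on the semi-direct product make it routine to check that $(i_1, i_2)$ is an embedding, $(\pi_1, \pi_2)$ an epimorphism, and $\IM(i_1, i_2) = \Ker(\pi_1, \pi_2)$; the only relative Rota-Baxter content beyond the underlying semi-direct products of groups is the pair of compatibilities $\pi_2 R = T \pi_1$ and $\pi_1 \phi_{(b,l)} = \beta_{\pi_2(b,l)} \pi_1$, both of which are visible on the nose. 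I would then take $s_H(a) = (a, 1_K)$ and $s_G(b) = (b, 1_L)$ as the candidate splitting. To confirm that $(s_H, s_G)$ is a morphism of relative Rota-Baxter groups, hence that the extension splits, I must verify $R\, s_H = s_G\, T$ and $\phi_{s_G(b)}\, s_H = s_H\, \beta_b$; each collapses to the vanishing identities $f(1_L, a) = 1_K$ and $f(l, 1_A) = 1_K$, and these follow by specializing the two homomorphism conditions of the preceding proposition to identity arguments.

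For the converse, I start from a split extension $\mathcal{E}$ with section $(s_H, s_G)$ that is a morphism of relative Rota-Baxter groups. The crucial observation is that $s_H, s_G$ are now honest group homomorphisms, so the $2$-cocycles $\tau_1, \tau_2$ of \eqref{mucocycle} and \eqref{sigmacocycle} vanish, and because the section intertwines operators and actions, the maps $\rho, \chi$ of \eqref{atilde} and \eqref{RRB2} vanish as well---exactly the reductions recorded just before the theorem. I would define $\nu, \mu, \sigma, f$ by \eqref{nuact}, \eqref{muact}, \eqref{sigmaact} and \eqref{fdefn}. Since a split extension need not be abelian, I cannot simply invoke Proposition \ref{construction of actions}; instead I re-derive that $\nu$ is a homomorphism and $\mu, \sigma$ are anti-homomorphisms directly from $s_G(b_1 b_2) = s_G(b_1) s_G(b_2)$ and $s_H(a_1 a_2) = s_H(a_1) s_H(a_2)$, with normality of $K$ and $L$ ensuring that the relevant conjugations land in $\Aut(K)$ and $\Aut(L)$.

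Finally I would assemble the equivalence. Writing each $h = s_H(\pi_1(h))\, k$ and $g = s_G(\pi_2(g))\, l$ uniquely, the maps $\eta(h) = (\pi_1(h),\, s_H(\pi_1(h))^{-1} h)$ and $\zeta(g) = (\pi_2(g),\, s_G(\pi_2(g))^{-1} g)$ are group isomorphisms onto $A \times_\mu K$ and $B \ltimes_\sigma L$. The simplified split-case formulas $\phi_{s_G(b) l}(s_H(a) k) = s_H(\beta_b(a))\, \nu_b(f(l,a) \alpha_l(k))$ and $R(s_H(a) k) = s_G(T(a))\, S(\nu^{-1}_{T(a)}(k))$ show that $\eta$ and $\zeta$ carry $\phi$ and $R$ precisely to the semi-direct product operator and action, so that $(\eta, \zeta)$ is an isomorphism of relative Rota-Baxter groups; and since $K, L$ lie in $\Ker \pi_1$ and $\Ker \pi_2$, the squares $\eta\, i_1 = i_1'$, $\pi_1'\, \eta = \pi_1$ and their $\zeta$-analogues commute, giving the desired equivalence. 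I expect the main obstacle to be the non-abelian bookkeeping in this converse---checking that the extracted $(\nu, \mu, \sigma, f)$ really satisfy the semi-direct-product homomorphism and Rota-Baxter conditions without abelianness---but this is precisely what the splitting buys us, since $\tau_1 = \tau_2 = \rho = \chi = 1$ removes every correction term.
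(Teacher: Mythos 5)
Your proposal is correct and follows exactly the route the paper intends: the paper states this theorem without a written proof, treating it as a routine consequence of the preceding discussion (where the splitting forces $\rho$ and $\chi$ to vanish and reduces $\phi$, $R$ to the semi-direct product formulas) together with the semi-direct product proposition, and your two halves --- the standard inclusions, projections and section $(s_H,s_G)=((a,1_K),(b,1_L))$ for the forward direction, and the isomorphism $(\eta,\zeta)$ built from the unique decompositions $h=s_H(\pi_1(h))k$, $g=s_G(\pi_2(g))l$ for the converse --- fill in precisely those routine details. Your extra care in re-deriving that $\nu$ is a homomorphism and $\mu,\sigma$ are anti-homomorphisms directly from the multiplicativity of the sections (rather than citing Proposition \ref{construction of actions}, which assumes an abelian extension) is a point the paper glosses over, and is handled correctly.
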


\begin{remark}
Note that abelian split extensions of relative Rota-Baxter groups correspond to the zero element of the second cohomology. Further, the skew left brace induced from the semi-direct product  of $(A, B, \beta, T)$ by  $(K, L, \alpha, S)$ is the semi-direct product of skew left braces $A_{T}$ by $K_{S}$ as defined in \cite{N1}. 
\end{remark}
\medskip
 	
\begin{ack}
{\rm Pragya Belwal thanks UGC for the PhD research fellowship and Nishant Rathee thanks IISER Mohali for the institute post doctoral fellowship. Mahender Singh is supported by the SwarnaJayanti Fellowship grants DST/SJF/MSA-02/2018-19 and SB/SJF/2019-20.}
\end{ack}

\section{Declaration}
The authors declare that they have no conflict of interest.
\medskip

\end{document}